\definecolor{darkred}{rgb}{0.6,0.2,0.2}
\newtheorem{theorem}{Theorem}[section]
\newtheorem{lemma}[theorem]{Lemma}
\newtheorem{prop}[theorem]{Proposition}
\newtheorem{corollary}[theorem]{Corollary}
\theoremstyle{definition}
\newtheorem{definition}[theorem]{Definition}
\newtheorem{example}[theorem]{Example}
\theoremstyle{remark}
\newtheorem{remark}[theorem]{Remark}
\numberwithin{equation}{section}
\newcommand{\ud}{\mathrm{d}}
\newcommand{\eqdefr}{=\mathrel{\mathop:}}
\newcommand{\eqdefl}{\mathrel{\mathop:}=}
\newcommand{\N}{\mathbb{N}}
\newcommand{\R}{\mathbb{R}}
\newcommand{\M}{\mathbb{M}}
\newcommand{\E}{\mathbf{E}}
\newcommand{\prob}{\mathbf{P}}
\DeclareMathOperator{\conv}{conv}
\DeclareMathOperator{\supp}{supp}
\DeclareMathOperator{\esssupp}{ess\,supp}
\DeclareMathOperator{\ran}{ran}
\DeclareMathOperator{\cone}{cone}
\DeclareMathOperator{\cl}{cl}
\DeclareMathOperator{\interior}{int}
\newcommand{\FF}{\mathscr{F}}
\newcommand{\X}{\bs{X}}
\newcommand{\LL}{\bs{L}}
\newcommand{\A}{\mathbf{A}}
\newcommand{\x}{\bs{x}}
\providecommand{\bs}[1]{\boldsymbol{#1}}
\begin{document}

\pdfstringdefDisableCommands{\def\eqref#1{(\ref{#1})}}

%%%%%%%%%%%%%%%%%%%%%%%%%%%%%%%%%%%%%%%%%%%%%%%%%%%%%%%%%%%%%%%%%%%%%%%%%%%%%%%
%%% Title stuff
%%%%%%%%%%%%%%%%%%%%%%%%%%%%%%%%%%%%%%%%%%%%%%%%%%%%%%%%%%%%%%%%%%%%%%%%%%%%%%%

\title[Multivariate L\'{e}vy-driven moving average processes]{On the conditional small ball property of multivariate L\'{e}vy-driven moving average processes}
\date{\today}
\subjclass[2010]{Primary 60G10, 60G17; Secondary 60G22, 60G51.}
\keywords{Small ball probability, conditional full support, moving average process, multivariate L\'evy process, convolution determinant, fractional L\'evy process, L\'evy-driven OU process, L\'evy copula, L\'evy mixing, multivariate subordination.}

\author[Pakkanen, M. S.]{Mikko S. Pakkanen}
\address{Mikko S. Pakkanen\\ Department of Mathematics\\ Imperial College London\\ South Kensington Campus\\ London SW7 2AZ\\ United Kingdom and CREATES \\ Aarhus University \\ Denmark}
\email{\href{mailto:m.pakkanen@imperial.ac.uk}{\nolinkurl{m.pakkanen@imperial.ac.uk}}} 

\author[Sottinen, T.]{Tommi Sottinen}
\address{Tommi Sottinen\\ Department of Mathematics and Statistics\\ University of Vaasa\\ P.O. Box 700\\ FIN-65101 Vaasa\\ Finland}
\email{\href{mailto:tommi.sottinen@iki.fi}{\nolinkurl{tommi.sottinen@iki.fi}}} 

\author[Yazigi, A.]{Adil Yazigi}
\address{Adil Yazigi\\ Department of Mathematics and Statistics\\ University of Vaasa\\ P.O. Box 700\\ FIN-65101 Vaasa\\ Finland}
\email{\href{mailto:adil.yazigi@gmail.com}{\nolinkurl{adil.yazigi@gmail.com}}}

%\begin{abstract}
%We study the infinite-dimensional conditional distributions of multivariate L\'evy-driven moving average processes.
%More precisely, we investigate whether such a process can shadow arbitrarily closely any continuous path, starting from the present value of the process, with %positive conditional probability, which we call the conditional small ball property. For a continuous process, the conditional small ball property is equivalent to %the conditional full support property. Our main results establish the conditional small ball property for L\'evy-driven moving average processes under natural non-%degeneracy conditions on the kernel function of the process and on the driving L\'evy process. We discuss in depth how to verify these conditions in practice. As %concrete examples, to which our results apply, we consider fractional L\'evy processes and multivariate L\'evy-driven Ornstein--Uhlenbeck processes. 
%\end{abstract}

\begin{abstract}
We study whether a multivariate L\'evy-driven moving average process can shadow arbitrarily closely any continuous path, starting from the present value of the process, with positive conditional probability, which we call the conditional small ball property. 
Our main results establish the conditional small ball property for L\'evy-driven moving average processes under natural non-degeneracy conditions on the kernel function of the process and on the driving L\'evy process. 
We discuss in depth how to verify these conditions in practice. 
As concrete examples, to which our results apply, we consider fractional L\'evy processes and multivariate L\'evy-driven Ornstein--Uhlenbeck processes. 
\end{abstract}

\thanks{M. S. Pakkanen was partially supported by CREATES (DNRF78), funded by the Danish National Research Foundation, 
 by Aarhus University Research Foundation (project ``Stochastic and Econometric Analysis of Commodity Markets"), and
by the Academy of Finland (project 258042).
T. Sottinen was partially funded by the Finnish Cultural Foundation (National Foundations' Professor Pool).
A. Yazigi was funded by the Finnish Doctoral Programme in Stochastics and Statistics.
}
\maketitle

%%%%%%%%%%%%%%%%%%%%%%%%%%%%%%%%%%%%%%%%%%%%%%%%%%%%%%%%%%%%%%%%%%%%%%%%%%%%%%%%%%%%%%%
%%%%%%%%%%%%%%%%%%%%%%%    The body    %%%%%%%%%%%%%%%%%%%%%%%%%%%%%%%%%%%%%%%%%%%%%%%%
%%%%%%%%%%%%%%%%%%%%%%%%%%%%%%%%%%%%%%%%%%%%%%%%%%%%%%%%%%%%%%%%%%%%%%%%%%%%%%%%%%%%%%%

%%%%%%%%%%%%%%%%%%%%%%%%%%%%%%%%%%%%%%%%%%%%%%%%%%%%%%%%%%%%%%%%%%%%%%%%%%%%%%%
%%%%%%%%%%%%%%%%%%%%%%%%%%%%%%%%%%%%%%%%%%%%%%%%%%%%%%%%%%%%%%%%%%%%%%%%%%%%%%%
\section{Introduction}
%%%%%%%%%%%%%%%%%%%%%%%%%%%%%%%%%%%%%%%%%%%%%%%%%%%%%%%%%%%%%%%%%%%%%%%%%%%%%%%
%%%%%%%%%%%%%%%%%%%%%%%%%%%%%%%%%%%%%%%%%%%%%%%%%%%%%%%%%%%%%%%%%%%%%%%%%%%%%%%

We consider multivariate L\'evy-driven moving average processes, i.e., $\R^d$-valued stochastic processes $\X=(\X_t)_{t\geqslant 0}$ defined by the stochastic integral 
\begin{equation}\label{eq:X-intro}
\X_t \,=\, 
\int_{-\infty}^t \big(\Phi(t-u) - \Psi(-u) \big)\ud \bs{L}_u, \quad t \geqslant 0.
\end{equation}
Here the driving process $\bs{L} = (\bs{L}_t)_{t\in \R}$ is a two-sided, $d$-dimensional L\'evy process; and $\Phi$ and $\Psi$ are deterministic functions that take values in the space of $d \times d$ matrices. Under some integrability conditions, which will be made precise in Section \ref{ssec:levy-ma} below, the stochastic integral in \eqref{eq:X-intro} exists in the sense of Rajput and Rosi\'nski \cite{Rajput-Rosinski-1989} as a limit in probability.

The process $\bs{X}$ is infinitely divisible and has stationary increments; in the case $\Psi=0$ the process is also stationary. Several interesting processes are special cases of $\bs{X}$, including fractional Brownian motion \cite{Mandelbrot-Van-Ness-1968}, fractional L\'evy processes \cite{Marquardt-2006}, and L\'evy-driven Ornstein--Uhlenbeck (OU) processes \cite{Barndorff-Nielsen-Shephard-2001,Sato-Yamazato-1984}. Various theoretical aspects of L\'evy-driven and Brownian moving average processes, such as semimartingale property, path regularity, stochastic integration, maximal inequalities, and asymptotic behavior of power variations, have attracted a lot of attention recently; see, e.g., \cite{Basse-2008,Basse-Pedersen-2009,Basse-OConnor-Lachieze-Rey-Podolskij-2015,Bender-Knobloch-Oberacker-2015a,Bender-Knobloch-Oberacker-2015b,Bender-Marquardt-2008,Cherny-2001}.

We study in this paper the following theoretical question regarding the infinite-di\-men\-sion\-al conditional distributions of the process $\bs{X}$. Suppose that $0 \leqslant t_0 < T < \infty$ and take a continuous path $\bs{f}: [t_0,T] \rightarrow \R^d$ such that $\bs{f}(t_0) = \bs{X}_{t_0}$. Can $\bs{X}$ shadow $\bs{f}$ within $\varepsilon$ distance on the interval $[t_0,T]$ with positive conditional probability, given the history of the process up to time $t_0$, for any $\varepsilon>0$ and any choice of $\bs{f}$? If the answer is affirmative, we say that $\bs{X}$ has the \emph{conditional small ball property} (CSBP). When $\bs{X}$ is continuous, the CSBP is equivalent to the \emph{conditional full support} (CFS) property, originally introduced by Guasoni et al.\ \cite{Guasoni-Rasonyi-Schachermayer-2008}, in connection with no-arbitrage and superhedging results for asset pricing models with transaction costs. For recent results on the CFS property, see, e.g., \cite{Bender-Sottinen-Valkeila-2011,Cherny-2008,Gasbarra-Sottinen-vanZanten-2011,Pakkanen-2010,Pakkanen-2011}.

In particular, Cherny \cite{Cherny-2008} has proved the CFS property for univariate Brownian moving average processes. The main results of this paper, stated in Section \ref{sec:mainres}, provide a multivariate generalization of Cherny's result and allow for a non-Gaussian multivariate L\'evy process as the driving process. Our first main result, Theorem \ref{Gaussian}, treats the multivariate Gaussian case (where it leads to no loss of generality if we assume that $\bs{X}$ is continuous). Namely, when $\bs{L}$ is a multivariate Brownian motion, we show that $\bs{X}$ has CFS provided that the \emph{convolution determinant} of $\Phi$ (see Definition \ref{convdet-definition}) does not vanish in a neighborhood of zero. The proof of Theorem \ref{Gaussian} is based on a multivariate generalization of Titchmarsh's convolution theorem \cite{Titchmarsh-1926}, due to Kalisch \cite{Kalisch-1963}.

Our second main result, Theorem \ref{Levy}, covers the case where $\bs{L}$ is purely non-Gaussian. Under some regularity conditions, we show that $\bs{X}$ has the CSBP if the non-degeneracy condition on the convolution determinant of $\Phi$, as seen in the Gaussian case, holds and if $\bs{L}$ satisfies a \emph{small jumps} condition on its L\'evy measure: for any $\varepsilon>0$, the origin of $\R^d$ is in the interior of the convex hull of the support of the L\'evy measure, restricted to the origin-centric ball with radius $\varepsilon$. Roughly speaking, this ensures that the L\'evy process $\bs{L}$ can move arbitrarily close to any point in $\R^d$ with arbitrarily small jumps. 
 We prove Theorem \ref{Levy} building on a small deviations result for L\'evy processes due to Simon \cite{Simon-2001}.

In Section \ref{apps}, we discuss in detail how to check the assumptions of Theorems \ref{Gaussian} and \ref{Levy} for concrete processes.
First, we provide tools for checking the non-degeneracy condition on the convolution determinant of $\Phi$ under various assumption on $\Phi$, including the cases where the components of $\Phi$ are regularly varying at zero and where $\Phi$ is of exponential form, respectively. As a result, we can show CFS for (univariate) fractional L\'evy processes and the CSBP for multivariate L\'evy-driven OU processes. Second, we introduce methods of checking the small jumps condition in Theorem \ref{Levy}, concerning the driving L\'evy process $\bs{L}$. We show how to establish the small jumps condition via the polar decomposition of the L\'evy measure of $\bs{L}$.
Moreover, we check the condition for driving L\'evy processes whose dependence structure is specified using multivariate subordination \cite{Barndorff-Nielsen-Pedersen-Sato-2001}, L\'evy copulas \cite{Kallsen-Tankov-2006}, or L\'evy mixing \cite{Barndorff-Nielsen-Perez-Abreu-Thorbjornsen-2013}.

We present the proofs of Theorems \ref{Gaussian} and \ref{Levy} in Section \ref{proofs}. Additionally, we include Appendices \ref{sec:convolution} and \ref{RV-app}, where we review (and prove) Kalisch's multivariate extension of Titchmarsh's convolution theorem and prove two ancillary results on regularly varying functions, respectively. Finally, in Appendix \ref{app:hitting-times} we comment on a noteworthy consequence of the CSBP in relation to hitting times.

%%%%%%%%%%%%%%%%%%%%%%%%%%%%%%%%%%%%%%%%%%%%%%%%%%%%%%%%%%%%%%%%%%%%%%%%%%%%%%%
%%%%%%%%%%%%%%%%%%%%%%%%%%%%%%%%%%%%%%%%%%%%%%%%%%%%%%%%%%%%%%%%%%%%%%%%%%%%%%%
\section{Preliminaries and main results}\label{sec:mainres}
%%%%%%%%%%%%%%%%%%%%%%%%%%%%%%%%%%%%%%%%%%%%%%%%%%%%%%%%%%%%%%%%%%%%%%%%%%%%%%%
%%%%%%%%%%%%%%%%%%%%%%%%%%%%%%%%%%%%%%%%%%%%%%%%%%%%%%%%%%%%%%%%%%%%%%%%%%%%%%%

\subsection{Notation and conventions}
 Throughout the paper, we use the convention that $\R_+ \eqdefl [0,\infty)$, $\R_{++} \eqdefl (0,\infty)$, and $\N \eqdefl \{1,2,\ldots\}$. For any $m \in \N$ and $n \in \N$, we write $\M_{m, n}$ for the space of real $m \times n$-matrices, using the shorthand $\M_{n}$ for $\M_{n, n}$, and $\mathbb{S}^+_n$ for the cone of symmetric positive semidefinite matrices in $\M_{n}$. As usual, we identify $\M_{n, 1}$ with $\R^n$. We denote by $\langle \cdot,\cdot\rangle$ the Euclidean inner product in $\R^n$. For $A \in \M_{m,n}$, the notation $\| A \|$ stands for the Frobenius norm of $A$, given by $\big(\sum_{i=1}^m \sum_{j=1}^n A^2_{i,j}\big)^{1/2}$, and $A^\top$ for the transpose of $A$.
 
 When $E$ is a subset of some topological space $\mathbb{X}$, we write $\partial E$ for the boundary of $E$, $\interior E$ for the interior of $E$, and $\cl E$ for the closure of $E$ in $\mathbb{X}$. When $E \subset \R^d$, we use $\conv E$ for the convex hull of $E$, which is the convex set obtained as the intersection of all convex subsets of $\R^n$ that contain $E$ as a subset. For any $\bs{x} \in \R^n$ and $\varepsilon>0$, we write $B(\bs{x},\varepsilon) \eqdefl \{ \bs{y} \in \R^n : \|\bs{x}-\bs{y}\|<\varepsilon\}$. Moreover, $\mathscr{S}^{n-1} \eqdefl \{ \bs{y} \in \R^n : \|\bs{y}\|= 1\}$ stands for the unit sphere in $\R^n$.  For measurable $\bs{f} : \R \rightarrow \R^n$, we write $\esssupp \bs{f}$ for the essential support of $f$, which is the smallest closed subset of $\R$ such that $\bs{f}=\bs{0}$ almost everywhere in its complement.
  
 We denote, for any $p>0$, by $L^p_{\mathrm{loc}}(\R_+,\R)$ the family of real-valued measurable functions $g$ defined on $\R_+$  
such that
\begin{equation*}
\int_0^t |g(u)|^p \ud u < \infty \quad \textrm{for all $t \geqslant 0$}, 
\end{equation*}
extending them to the real line by $g(u) \eqdefl 0$, $u <0$, when necessary. 
Moreover, we denote by $L^p_{\mathrm{loc}}(\R_+,\M_{m, n})$  the family of measurable functions 
$G: \R_+ \rightarrow \M_{m, n}$, where the $(i,j)$-th component function $u \mapsto G_{i,j}(u)$ of $G$ belongs to $L^p_{\mathrm{loc}}(\R_+,\R)$ for any $i = 1,\ldots,m$ and $j = 1,\ldots,n$. Similarly, we write $G \in  L^p(\R_+,\M_{m, n})$ if each of the component functions of $G$ belongs to $L^p(\R_+,\R)$. Note that $L^2(\R_+,\M_{m, n}) \subset L^1_{\mathrm{loc}}(\R_+,\M_{m, n})$.

Recall that
 the convolution of $g\in L^1_{\mathrm{loc}}(\R_+,\R)$ and $h\in L^1_{\mathrm{loc}}(\R_+,\R)$, denoted by $g * h$, is a function on $\R_+$ defined via
\begin{equation*}
(g * h)(t) \eqdefl \int_0^t g(t-u) h(u) \ud u, \quad t \geqslant 0,
\end{equation*}
and that $(g,h) \mapsto g * h$ is an associative and commutative binary operation on $L^1_{\mathrm{loc}}(\R_+,\R)$. 
We additionally extend the convolution to matrix-valued functions $G \in L^1_{\mathrm{loc}}(\R_+,\M_{m,r})$ and $H \in L^1_{\mathrm{loc}}(\R_+,\M_{r,n})$, for any $r \in \N$, by defining
\begin{equation*}
(G * H)_{i,j}(t) \eqdefl \sum_{k=1}^r (G_{i,k} * H_{k,j})(t), \quad t \geqslant 0,
\end{equation*}
for all $i = 1,\ldots,m$ and $j = 1,\ldots,n$. It follows from the properties of the convolution of scalar-valued functions that $G * H \in L^1_{\mathrm{loc}}(\R_+,\M_{m,n})$.

When $\mathbb{X}$ is a Polish space (separable topological space with complete metrization), we write $\mathscr{B}(\mathbb{X})$ for the Borel $\sigma$-algebra of $\mathbb{X}$. When $\mu$ is a Borel measure on $\mathbb{X}$,
we denote by $\supp \mu$ the support of $\mu$, which is the set of points $x \in \mathbb{X}$ such that $\mu(A)>0$ for any open $A \subset \mathbb{X}$ such that $x \in A$. Moreover, if $-\infty<u<v<\infty$ and $\bs{x} \in \R^n$, then $C_{\bs{x}}([u,v],\R^n)$ (resp.\ $C^1_{\bs{x}}([u,v],\R^n)$) stands for the family of continuous (resp.\ continuously differentiable) functions $\bs{f}:[u,v]\rightarrow \R^n$ such that $\bs{f}(u) = \bs{x}$. We equip $C_{\bs{x}}([u,v],\R^n)$ with the uniform topology induced by the sup norm. Finally, $D([u,v],\R^n)$ denotes the space of c\`adl\`ag (right continuous with left limits) functions $\bs{f}:[u,v]\rightarrow \R^n$, equipped with the Skorohod topology, and $D_{\bs{x}}([u,v],\R^n)$ its subspace of functions $\bs{f}\in D([u,v],\R^n)$ such that $\bs{f}(u) = \bs{x}$.

\subsection{L\'evy-driven moving average processes}\label{ssec:levy-ma}

Fix $d \in \N$ and let $(\LL_t)_{t \geqslant 0}$ be a L\'evy process in $\R^d$, defined on a probability space $(\Omega,\FF,\prob)$, with characteristic triplet $(\bs{b},\mathfrak{S},\Lambda)$, where $\bs{b} \in \R^d$, $\mathfrak{S} \in \mathbb{S}_d^+$, and $\Lambda$ is a \emph{L\'evy measure} on $\R^d$, that is, a Borel measure on $\R^d$ that satisfies $\Lambda(\{ \bs{0}\})=0$ and
\begin{equation}\label{levy-integrability}
\int_{\R^d} \min\{1,\| \bs{x} \|^2\}\, \Lambda (\ud \bs{x}) < \infty.
\end{equation}
Recall that the process $\LL$ has the \emph{L\'evy--It\^o representation}
\begin{equation*}
\LL_t \eqdefl \bs{b}t+ \Xi^\top \bs{W}_t + \int_0^t \int_{\{\| \bs{x}\| \leqslant 1 \}} \x \, \widetilde{N} ( \ud \x,\ud u) + \int_0^t \int_{\{\| \x\| > 1 \}} \x \, N ( \ud \x,\ud u), \quad t \geqslant 0,
\end{equation*}
where $\Xi \in \mathbb{M}_d$ is such that $\mathfrak{S} = \Xi^\top \Xi$, $\bs{W}=(\bs{W}_t)_{t \geqslant 0}$ is a standard Brownian motion in $\R^d$, $N$ is a Poisson random measure on $\R^d \times [0,\infty)$ with compensator $\nu(\ud \bs{x},\ud u) \eqdefl \Lambda(\ud \bs{x})\ud u$ and $\widetilde{N} \eqdefl N-\nu$ is the compensated Poisson random measure corresponding to $N$. The Brownian motion $\bs{W}$ and the Poisson random measure $N$ are mutually independent. For a comprehensive treatment of L\'evy processes, we refer the reader to the monograph by Sato \cite{Sato-1999}.

Let $(\LL'_t)_{t \geqslant 0}$ be an independent copy of $(\LL_t)_{t \geqslant 0}$. It is well-known that both $(\LL_t)_{t \geqslant 0}$ and $(\LL'_t)_{t \geqslant 0}$ admit c\`adl\`ag modifications (see, e.g., \cite[Theorem 15.1]{Kallenberg-2002}) and we tacitly work with these modifications. We can then extend $\LL$ to a c\`adl\`ag process on $\R$ by
\begin{equation}\label{two-sided}
\tilde{\LL}_t 
\eqdefl \LL_t \mathbf{1}_{[0,\infty)}(t) + \LL'_{-t-}\mathbf{1}_{(-\infty,0)}(t), \quad t \in \R.
\end{equation}
In what follows we will, for the sake of simplicity, identify $\LL_t$ with $\tilde{\LL}_t$ even when $t < 0$.

Let $\Phi : \R \rightarrow \mathbb{M}_d$ and $\Psi : \R \rightarrow \mathbb{M}_d$ be measurable matrix-valued functions such that $\Phi(t) = 0 =\Psi(t)$ for all $t <0$. Define a kernel function
\begin{equation*}
K(t,u)  \eqdefl \Phi(t-u) - \Psi(-u), \quad (t,u) \in \R^2.
\end{equation*}
The key object we study in this paper is a causal, stationary-increment moving average process $\bs{X}=(\bs{X}_t)_{t \geqslant 0}$ driven by $\LL$, which is defined by
\begin{equation}\label{Xdef}
\X_t \eqdefl \int^t_{-\infty} K(t,u) \ud \LL_u, \quad t \geqslant 0.
\end{equation}
The stochastic integral in \eqref{Xdef} is defined in the sense of Rajput and Rosi\'nski \cite{Rajput-Rosinski-1989} (see also Basse-O'Connor et al.\ \cite{Basse-OConnor-Graversen-Pedersen-2014}) as a limit in probability, provided that (see \cite[Corollary 4.1]{Basse-OConnor-Graversen-Pedersen-2014}) for any $i=1,\ldots,d$ and $t \geqslant 0$,
\begin{subequations}
\begin{align}
 &\int_{-\infty}^t \langle \bs{k}_i(t,u), \mathfrak{S}\, \bs{k}_i(t,u)\rangle  \ud u < \infty,\label{Int-1} \\
 &\int_{-\infty}^t \int_{\R^d}\min\big\{1,\langle\bs{k}_i(t,u), \x\rangle^2\big\} \Lambda(\ud \bs{x}) \ud u  <\infty,\label{Int-2}\\
& \int_{-\infty}^t \bigg|\langle \bs{k}_i(t,u), \bs{b}\rangle + \int_{\R^d}\big(\tau_1\big(\langle \bs{k}_i(t,u), \x\rangle\big) - \langle \bs{k}_i(t,u), \tau_d(\x)\rangle\big) \Lambda(\ud \bs{x}) \bigg|\ud u < \infty,\label{Int-3}
\end{align}
\end{subequations}
where $\bs{k}_i(t,u)\in \R^d$ is the $i$-th row vector of $K(t,u)$, $\tau_1(x) \eqdefl x \mathbf{1}_{\{ |x| \leqslant 1 \}}$, and $\tau_d(\x) \eqdefl \x \mathbf{1}_{\{ \|\x\| \leqslant 1\}}$.

\begin{remark}
When $\LL$ is a driftless Brownian motion (that is, $\bs{b}=\bs{0}$ and $\Lambda = 0$), the conditions \eqref{Int-2} and \eqref{Int-3} become vacuous. When $\det(\mathfrak{S})\neq 0$, the condition \eqref{Int-1} implies that $\int_0^t \|K(t,u)\|^2 \ud u<\infty$, which in turn implies that  $\Phi \in L^2_{\mathrm{loc}}(\R_+,\M_d)$.
\end{remark}

In the case where $\E[\|\bs{L}_1\|^2]<\infty$, which is equivalent to the condition \eqref{square-kernel} below, we find a more convenient sufficient condition for integrability:

\begin{lemma}[Square-integrable case]
Suppose that the L\'evy measure $\Lambda$ satisfies
\begin{equation}\label{square-kernel}
\int_{\| \bs{x} \| > 1} \| \bs{x} \|^2 \Lambda (\ud \bs{x}) <\infty.
\end{equation} 
Then conditions \eqref{Int-1}, \eqref{Int-2}, and \eqref{Int-3} are satisfied provided that
\begin{equation}\label{K-int}
\int_{-\infty}^t (\|K(t,u)\| + \|K(t,u)\|^2) \ud u < \infty \quad \textrm{for any $t \geqslant 0$.}
\end{equation}
\end{lemma}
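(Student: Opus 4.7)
The plan is to verify each of \eqref{Int-1}, \eqref{Int-2}, \eqref{Int-3} separately by pointwise bounds in $(t,u)$ and in $\x$, and then invoke Fubini together with hypothesis \eqref{K-int}. A preliminary observation is that \eqref{levy-integrability} combined with \eqref{square-kernel} gives $\int_{\R^d}\|\x\|^2\,\Lambda(\ud\x)<\infty$, and that throughout the row norm is dominated by the Frobenius norm: $\|\bs{k}_i(t,u)\|\leqslant \|K(t,u)\|$.

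For \eqref{Int-1} I would simply use the positive-semidefinite operator bound $\langle \bs{k}_i,\mathfrak{S}\bs{k}_i\rangle\leqslant \|\mathfrak{S}\|\,\|\bs{k}_i\|^2\leqslant \|\mathfrak{S}\|\,\|K(t,u)\|^2$, which is integrable in $u$ by the $\|K\|^2$ part of \eqref{K-int}. For \eqref{Int-2}, Cauchy--Schwarz yields $\min\{1,\langle \bs{k}_i,\x\rangle^2\}\leqslant \langle \bs{k}_i,\x\rangle^2\leqslant \|\bs{k}_i\|^2\|\x\|^2$, so by Fubini the double integral is bounded by $\bigl(\int_{-\infty}^t\|K(t,u)\|^2\ud u\bigr)\bigl(\int_{\R^d}\|\x\|^2\,\Lambda(\ud\x)\bigr)<\infty$.

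The only step that requires real care, and which I expect to be the main obstacle, is \eqref{Int-3}, because the two truncations $\tau_1(\langle\bs{k}_i,\x\rangle)$ and $\langle\bs{k}_i,\tau_d(\x)\rangle$ use different thresholds. I would split the integrand according to whether $\|\x\|\leqslant 1$ or $\|\x\|>1$. On $\{\|\x\|\leqslant 1\}$ one has $\tau_d(\x)=\x$, so
\[
\bigl|\tau_1(\langle\bs{k}_i,\x\rangle)-\langle\bs{k}_i,\x\rangle\bigr|=|\langle\bs{k}_i,\x\rangle|\,\mathbf{1}_{\{|\langle\bs{k}_i,\x\rangle|>1\}}\leqslant \langle\bs{k}_i,\x\rangle^2\leqslant \|K(t,u)\|^2\|\x\|^2,
\]
using $|y|\leqslant y^2$ when $|y|>1$. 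On $\{\|\x\|>1\}$ one has $\tau_d(\x)=\bs{0}$, so $|\tau_1(\langle\bs{k}_i,\x\rangle)|\leqslant|\langle\bs{k}_i,\x\rangle|\leqslant \|K(t,u)\|\,\|\x\|\leqslant \|K(t,u)\|\,\|\x\|^2$ (since $\|\x\|\leqslant\|\x\|^2$ there). Adding the two pieces and integrating against $\Lambda$ produces a bound of the form $C_1\|K(t,u)\|^2+C_2\|K(t,u)\|$, and adding the drift term $|\langle\bs{k}_i,\bs{b}\rangle|\leqslant \|\bs{b}\|\|K(t,u)\|$ gives exactly an integrand of the form $C(\|K(t,u)\|+\|K(t,u)\|^2)$. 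Hypothesis \eqref{K-int} then yields finiteness.

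This explains precisely why both $\|K\|$ and $\|K\|^2$ appear in \eqref{K-int}: the quadratic term is forced by the small jumps contribution in the covariance and in \eqref{Int-2}, while the linear term is forced by the mismatched truncation in \eqref{Int-3} when jumps are large. I would present the three verifications in this order, noting the decomposition trick for \eqref{Int-3} as the key step.
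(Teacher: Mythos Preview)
Your proof is correct and follows essentially the same approach as the paper: both verify \eqref{Int-1} trivially, handle \eqref{Int-2} by Cauchy--Schwarz, and treat \eqref{Int-3} by splitting into $\{\|\x\|\leqslant 1\}$ and $\{\|\x\|>1\}$ with the key inequality $|\tau_1(y)-y|\leqslant y^2$ on the former set. The only cosmetic difference is that on $\{\|\x\|>1\}$ the paper bounds $|\tau_1(\langle\bs{k}_i,\x\rangle)|\leqslant \|\bs{k}_i\|\,\|\x\|$ and uses $\int_{\|\x\|>1}\|\x\|\,\Lambda(\ud\x)<\infty$, whereas you pass to $\|\x\|^2$ via $\|\x\|\leqslant\|\x\|^2$; both are equally valid.
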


\begin{proof}
It suffices to only check conditions \eqref{Int-2} and \eqref{Int-3}, condition \eqref{Int-1} being evident. Note that \eqref{square-kernel}, together with \eqref{levy-integrability}, implies that $\int_{\R^d} \| \bs{x}\|^2 \Lambda (\ud \bs{x})< \infty$.  Thus, by the Cauchy--Schwarz inequality and \eqref{K-int},
\begin{equation*}
\int_{-\infty}^t \int_{\R^d}\min\big\{1,\langle\bs{k}_i(t,u), \x\rangle^2\big\} \Lambda(\ud \bs{x}) \ud u \leqslant \int_{-\infty}^t \| \bs{k}_i(t,u)\|^2 \ud u \int_{\R^d} \| x\|^2 \Lambda (\ud \bs{x}) <\infty,
\end{equation*}
so \eqref{Int-2} is satisfied. To verify \eqref{Int-3}, we can estimate
\begin{multline}\label{Levy-int2-bound}
\int_{-\infty}^t \bigg|\langle \bs{k}_i(t,u), \bs{b}\rangle + \int_{\R^d}\big(\tau_1\big(\langle \bs{k}_i(t,u), \x\rangle\big) - \langle \bs{k}_i(t,u), \tau_d(\x)\rangle\big) \Lambda(\ud \bs{x}) \bigg|\ud u \\
\begin{aligned}
&\leqslant  \| \bs{b} \| \int_{-\infty}^t \|\bs{k}_i(t,u)\| \ud u + \int_{-\infty}^t\int_{\{\| \bs{x}\| > 1\}}\big|\tau_1\big(\langle \bs{k}_i(t,u), \x\rangle\big)\big| \Lambda(\ud \bs{x}) \ud u \\
& \quad + \int_{-\infty}^t \int_{\{\| \bs{x}\| \leqslant 1\}}\big|\tau_1\big(\langle \bs{k}_i(t,u), \x\rangle\big) - \langle \bs{k}_i(t,u), \x\rangle\big| \Lambda(\ud \bs{x}) \ud u,
\end{aligned}
\end{multline}
where the first term on the r.h.s.\ is finite under \eqref{K-int}. The second term on the r.h.s.\ of \eqref{Levy-int2-bound} can be shown to be finite using Cauchy--Schwarz, viz.,
\begin{equation*}
\int_{-\infty}^t\int_{\{\| \bs{x}\| > 1\}}\big|\tau_1\big(\langle \bs{k}_i(t,u), \x\rangle\big)\big| \Lambda(\ud \bs{x}) \ud u \leqslant \int_{-\infty}^t \| \bs{k}_i(t,u)\| \ud u \int_{\{\| \bs{x}\| > 1\}} \| x\| \Lambda (\ud \bs{x}) <\infty,\end{equation*}
since \eqref{square-kernel} implies that $\int_{\{\| \bs{x}\| > 1\}} \| x\| \Lambda (\ud \bs{x}) <\infty$. Finally, to treat the third term on the r.h.s.\ of \eqref{Levy-int2-bound}, note that $|\tau_1(x)-x| = |x|\mathbf{1}_{\{ |x|>1\}} \leqslant x^2$ for any $x \in \R$. Hence, 
\begin{multline*}
\int_{-\infty}^t \int_{\{\| \bs{x}\| \leqslant 1\}}\big|\tau_1\big(\langle \bs{k}_i(t,u), \x\rangle\big) - \langle \bs{k}_i(t,u), \x\rangle\big| \Lambda(\ud \bs{x}) \ud u \\
\begin{aligned}
& \leqslant \int_{-\infty}^t \int_{\{\| \bs{x}\| \leqslant 1\}} \langle \bs{k}_i(t,u), \x\rangle^2\Lambda(\ud \bs{x}) \ud u \\
&  \leqslant \int_{-\infty}^t \| \bs{k}_i(t,u)\|^2 \ud u \int_{\{\| \bs{x}\| \leqslant 1\}}  \|\x\|^2\Lambda(\ud \bs{x}) < \infty, 
\end{aligned}
\end{multline*}
by Cauchy--Schwarz and \eqref{square-kernel}.
\end{proof}

In the sequel, unless we are considering some specific examples of $K$, we shall tacitly assume that the integrability conditions \eqref{Int-1}, \eqref{Int-2}, and \eqref{Int-3} are satisfied.

The following decomposition of $\bs{X}$ is fundamental; our technical arguments and some of our assumptions rely on it. For any $t_0 \geqslant 0$,
\begin{equation}\label{fundamental}
\bs{X}_t = \bs{X}_{t_0} + \bar{\bs{X}}^{t_0}_t + \bs{A}^{t_0}_t, \quad t \geqslant t_0, 
\end{equation}
where 
\begin{align*}
\bar{\bs{X}}^{t_0}_t & \eqdefl \int_{t_0}^t \Phi(t-u) \ud \bs{L}_u,\\
\bs{A}^{t_0}_t &  \eqdefl \int_{-\infty}^{t_0} \big( \Phi(t-u) - \Phi(t_0 -u)\big) \ud \bs{L}_u.
\end{align*}
Note that for $t_0 \geqslant 0$ it holds that
\begin{equation*}
\Phi(t-u)\mathbf{1}_{(t_0,t)}(u) = K(t,u)\mathbf{1}_{(t_0,t)}(u) \quad \textrm{for any $t \geqslant t_0$ and $u \in \R$.}
\end{equation*}
Thus the integrability conditions \eqref{Int-1}, \eqref{Int-2}, and \eqref{Int-3} ensure that the stochastic integral defining $\bar{\bs{X}}^{t_0}_t$ exists in the sense of \cite{Rajput-Rosinski-1989} and, consequently, $\bs{A}^{t_0}_t$ is well-defined as well.

\subsection{The conditional small ball property}%\label{csbp-results}

To formulate our main results, we introduce the conditional small ball property (cf.\ \cite[p.\ 459]{Bender-Sottinen-Valkeila-2008}):

\begin{definition}\label{CSBP-def}
A c\`adl\`ag process $\bs{Y}=(\bs{Y}_t)_{t \in [0,T]}$, with values in $\R^d$, has the \emph{conditional small ball property} (CSBP) with respect to filtration $\mathscr{F}=(\mathscr{F}_t)_{t \in [0,T]}$, if 
\begin{enumerate}[label=(\roman*),ref=\roman*,leftmargin=2.2em]
\item\label{CSBP-1} $\bs{Y}$ is adapted to $\mathscr{F}$,
\item\label{CSBP-2} for any $t_0 \in [0,T)$, $\bs{f} \in C_{\bs{0}}([t_0,T],\R^d)$, and $\varepsilon >0$, 
\begin{equation}\label{eq:CSBP-prob}
\prob \bigg[ \sup_{t \in [t_0,T]} \| \bs{Y}_t-\bs{Y}_{t_0} - \bs{f}(t) \| < \varepsilon \, \bigg|\, \mathscr{F}_{t_0}  \bigg] >0\quad \textrm{almost surely.}  
\end{equation}
\end{enumerate}
If the process $\bs{Y}$ is continuous and satisfies \eqref{CSBP-1} and \eqref{CSBP-2}, then we say that $\bs{Y}$ has \emph{conditional full support} (CFS) with respect to $\mathscr{F}$.
\end{definition}

\begin{remark}\phantomsection \label{CFS-rem}
\begin{enumerate}[label=(\roman*),ref=\roman*,leftmargin=2.2em]
\item The condition \eqref{CSBP-2} of Definition \ref{CSBP-def} has an equivalent formulation (cf.\ \cite[p.\ 459]{Bender-Sottinen-Valkeila-2008}), where the deterministic time $t_0 \in [0,T)$ in \eqref{eq:CSBP-prob} is replaced with any stopping time $\tau$ such that $\prob[\tau < T]>0$ . The equivalence of this, seemingly stronger, formulation with the original one can be shown adapting the proof of \cite[Lemma 2.9]{Guasoni-Rasonyi-Schachermayer-2008}.
\item More commonly, the CFS property is defined via the condition
\begin{equation}\label{CFS-def}
\supp \mathrm{Law}_{\prob}\big( (\bs{Y}_t)_{t \in [t_0,T]}\big| \mathscr{F}_{t_0} \big) = C_{\bs{Y}_0}([t_0,T],\R^d ) \quad \textrm{a.s.\ for any $t_0 \in [0,T)$,}
\end{equation}
where $\mathrm{Law}_{\prob}\big( (\bs{Y}_t)_{t \in [t_0,T]}\big| \mathscr{F}_{t_0} \big)$ stands for the regular conditional law of $(\bs{Y}_t)_{t \in [t_0,T]}$ on $C([t_0,T],\R^d )$ under $\prob$, given $\mathscr{F}_{t_0}$. The equivalence of condition \eqref{CSBP-2} of Definition \ref{CSBP-def} and \eqref{CFS-def} is an obvious extension of \cite[Lemma 2.1]{Pakkanen-2010}.
 We argue that for discontinuous $\bs{Y}$, the natural generalization of the CFS property would be
\begin{equation}\label{CFS-skorohod}
\supp \mathrm{Law}_{\prob}\big( (\bs{Y}_t)_{t \in [t_0,T]}\big| \mathscr{F}_{t_0} \big) = D_{\bs{Y}_0}([t_0,T],\R^d ) \quad \textrm{a.s.\ for any $t_0 \in [0,T)$,}
\end{equation}
where the regular conditional law is now defined on the Skorohod space $D([t_0,T],\R^d )$. The condition \eqref{CSBP-2} in Definition \ref{CSBP-def} does not imply \eqref{CFS-skorohod}, which is why we refer to the property introduced in Definition \ref{CSBP-def} as the CSBP, instead of CFS. The CFS property for discontinuous processes, defined by \eqref{CFS-skorohod}, appears to be considerably more difficult to check than the CSBP; and the question whether (discontinuous) L\'evy-driven moving average processes have CFS is beyond the scope of the present paper. However, in the context of L\'evy processes, the CFS property could be studied using a Skorohod-space support theorem due to Simon \cite[Corollaire 1]{Simon-2001}, relying on the independence and stationarity of increments. 

\item\label{mixing} Suppose that $\bs{Y}$ has the CSBP (resp. CFS) with respect to $\mathscr{F}$. If $\bar{\bs{Y}}=\big(\bar{\bs{Y}}_t\big)_{t \in [0,T]}$ is a continuous process independent of $\bs{Y}$, then the process
\begin{equation*}
\bs{Z}_t \eqdefl \bs{Y}_t + \bar{\bs{Y}}_t, \quad t \in [0,T],
\end{equation*}
has the CSBP (resp. CFS) with respect to its natural filtration. This is a straightforward extension of \cite[Lemma 3.2]{Gasbarra-Sottinen-vanZanten-2011}.

\item If $\bs{Y}=(\bs{Y}_t)_{t \in [0,T]}$ has the CSBP with respect to $\mathscr{F}$, then $\bs{Y}_t$, for any $t \in (0,T]$, has full support in $\R^d$, in the sense that
\begin{equation*}
\supp \mathrm{Law}_{\prob}(\bs{Y}_t) = \R^d.
\end{equation*}
It is also possible to show that $\bs{Y}$ is then able to hit any open subset of $\R^d$ arbitrarily fast after any stopping time with positive conditional probability. We elaborate on this property in Appendix \ref{app:hitting-times}.

\item The CSBP implies the so-called \emph{stickiness} property, introduced by Guasoni \cite{Guasoni-2006}, which is a sufficient condition for some no-arbitrage results on market models with frictions. Guasoni \cite[Proposition 2.1]{Guasoni-2006} showed that any univariate, sticky c\`adl\`ag process is arbitrage-free (as a price process) under proportional transaction costs. More recently, R\`asonyi and Sayit \cite[Proposition 5.3]{Rasonyi-Sayit-2015} have shown that multivariate, sticky c\`adl\`ag processes are arbitrage-free under superlinear frictions.
\end{enumerate}

\end{remark}

In what follows, we work with the increment filtration $\mathscr{F}^{\bs{L},\mathrm{inc}} = \big(\mathscr{F}^{\bs{L},\mathrm{inc}}_t\big)_{t \in \R}$, given by
\begin{equation*}
\mathscr{F}^{\bs{L},\mathrm{inc}}_t \eqdefl \sigma ( \bs{L}_u- \bs{L}_v: -\infty < v < u \leqslant t).
\end{equation*}
If we prove that the moving average process $\bs{X}$ has the CSBP (resp.\ CFS) with respect to $\mathscr{F}^{\bs{L},\mathrm{inc}}$, then also the CSBP (resp.\ CFS) with respect to the (smaller)  augmented natural filtration of $\bs{X}$ follows, by \cite[Lemma 2.2 and Corollary 2.1]{Pakkanen-2010}. However, we are unable to work with the larger filtration
\begin{equation*}
\mathscr{F}^{\bs{L}}_t \eqdefl \sigma ( \bs{L}_u: -\infty < u \leqslant t),\quad t \in \R,
\end{equation*}
since the increments $\bs{L}_u - \bs{L}_v$, $t \leqslant u < v$, are typically not independent of $\mathscr{F}^{\bs{L}}_t$; 
see \cite{Basse-OConnor-Graversen-Pedersen-2014} for a discussion. (This independence property is essential in our arguments.)

It is convenient to treat separately the two cases where the L\'evy process $\bs{L}$ is Gaussian ($\Lambda =0$) and purely non-Gaussian ($\mathfrak{S} = 0$), respectively. However, we stress that our results make it possible to establish the CSBP/CFS also in the general (mixed) case, where the process $\bs{X}$ can be expressed as a sum of two mutually independent moving average processes, with a Brownian motion and a purely non-Gaussian L\'evy processes as the respective drivers; see Remark \ref{CFS-rem}\eqref{mixing}.

Let us consider the Gaussian case first. In this case we may assume, without loss of generality, that the moving average process $\bs{X}$ is continuous --- if $\bs{X}$ were discontinuous, it would have almost surely unbounded trajectories by a result of Belyaev \cite{Belyaev-1961}.

In his paper \cite{Cherny-2008}, Cherny considered the univariate Brownian moving average process
\begin{equation*}
Z_t \eqdefl \int_{-\infty}^t \big(f(t-s) - f(-s)\big) \ud B_s, \quad t \geqslant 0,
\end{equation*}
where $f$ is a measurable function on $\R_+$ that satisfies $\int_{-\infty}^t \big(f(t-s)-f(-s)\big)^2 \ud s <\infty$ for any $t \geqslant 0$ and $(B_t)_{t \in \R}$ is a two-sided standard Brownian motion. He showed, see \cite[Theorem 1.1]{Cherny-2008}, that $(Z_t)_{t \in [0,T]}$ has CFS for any $T>0$, as long as
\begin{equation}\label{uni-condition}
\esssupp f \neq \varnothing.
\end{equation}
A naive attempt to generalize Cherny's result to the multivariate moving average process $\bs{X}$ in the Gaussian case would be build on the assumption that the components of the kernel function $\Phi$ satisfy individually the univariate condition \eqref{uni-condition}. However, this would fail to account for the possibility that the components of $\bs{X}$ may become perfectly dependent, which would evidently be at variance with the CFS property.

It turns out that a suitable multivariate generalization of the condition \eqref{uni-condition} can be formulated using the following concept:
\begin{definition}\label{convdet-definition}
The \emph{convolution determinant} of $G \in L^1_{\mathrm{loc}}(\R_+,\M_n)$  is a real-valued function given by
\begin{equation}\label{def:det}
\mathrm{det}^* (G)(t) \eqdefl \sum_{\sigma \in S_n} \mathrm{sgn}(\sigma) \big(G_{1,\sigma(1)} * \cdots * G_{n,\sigma(n)}\big)(t),\quad t \geqslant 0,
\end{equation}
where $S_n$ stands for the group of permutations of the set $\{1,\ldots,n\}$ and $\mathrm{sgn}(\sigma)$ for the signature of $\sigma \in S_n$. We note that $\mathrm{det}^* (G) \in L^1_{\mathrm{loc}}(\R_+,\R)$ and that the formula \eqref{def:det} is in fact identical to the definition of the ordinary determinant, except that products of scalars are replaced with convolutions of functions therein. 
\end{definition}

\begin{remark}
Unfortunately, the literature on convolution determinants is rather scarce, but convolution determinants are discussed in some books on integral equations; see, e.g., \cite{Asanov-1998}. We review some pertinent properties of the convolution determinant in Appendix \ref{sec:convolution}.
\end{remark}

In the Gaussian case we obtain the following result, which says that the process $\bs{X}$ has CFS, provided that the convolution determinant of the kernel function $\Phi$ does not vanish near zero. We defer the proof of this result to Section \ref{Gaussian-proof}.

\begin{theorem}[Gaussian case]\label{Gaussian}
Suppose that the driving L\'evy process $\LL$ is a non-de\-gen\-er\-ate Brownian motion, that is, $\det(\mathfrak{S})\neq 0$ and $\Lambda = 0$. 
Assume, further, that the processes $\X$ and $\bs{A}^{t_0}$, for any $t_0 \geqslant 0$, are continuous (modulo taking modifications).
If
\begin{equation*}\label{detstar}
0 \in \esssupp\mathrm{det}^* (\Phi), \tag{DET-\textasteriskcentered}
\end{equation*}
then $(\X_t)_{t \in [0,T]}$ has CFS with respect to $\big(\mathscr{F}^{\bs{L},\mathrm{inc}}_t\big)_{t \in [0,T]}$ for any $T>0$.
\end{theorem}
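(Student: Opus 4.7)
I would reduce the conditional full-support claim to an unconditional full-support statement for an auxiliary Gaussian process, identify that support as the closure of a Cameron--Martin space, and then establish density in the uniform topology via Hahn--Banach duality combined with Kalisch's multivariate Titchmarsh theorem. Concretely, fix $t_0 \in [0,T)$ and $\f \in C_{\0}([t_0,T],\R^d)$. The fundamental decomposition \eqref{fundamental} writes $\X_t - \X_{t_0} = \bar{\X}^{t_0}_t + \bs{A}^{t_0}_t$ on $[t_0,T]$, where $\bs{A}^{t_0}$ is $\mathscr{F}^{\bs{L},\mathrm{inc}}_{t_0}$-measurable and continuous by hypothesis, whereas $\bar{\X}^{t_0}$, built from increments of $\bs{L}$ on $(t_0,\infty)$, is independent of $\mathscr{F}^{\bs{L},\mathrm{inc}}_{t_0}$. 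Setting $\bs{g}(\omega,t) := \f(t) - \bs{A}^{t_0}_t(\omega)$, which almost surely lies in $C_{\0}([t_0,T],\R^d)$, the conditional probability in \eqref{eq:CSBP-prob} reduces a.s.\ to $\prob[\|\bar{\X}^{t_0} - \bs{g}(\omega,\cdot)\|_{\infty} < \varepsilon]$, so it suffices to show that the law of $\bar{\X}^{t_0}$ on $C_{\0}([t_0,T],\R^d)$ has full topological support.

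Since $\bar{\X}^{t_0}$ is a centered Gaussian element of this separable Banach space, the classical Gaussian support theorem identifies its support with the sup-norm closure of its Cameron--Martin space. Writing $\bs{L}_t = \Xi^\top \bs{W}_t$ on the relevant increments and exploiting invertibility of $\mathfrak{S}$ (hence of $\Xi$) to absorb $\Xi^\top$ into the integrand, one identifies the Cameron--Martin space with $\{ \Phi * \bs{g} : \bs{g} \in L^2([t_0,T],\R^d)\}$, the image of the matrix convolution operator acting on $L^2$. Density of this image in $C_{\0}([t_0,T],\R^d)$ with respect to the sup norm is what remains to prove.

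Density is handled by duality. Any annihilating finite $\R^d$-valued Borel measure $\bs{\mu}$ on $[t_0,T]$ satisfies, by Fubini, $\int_{[u,T]} \Phi^\top(t-u)\,\bs{\mu}(\ud t) = \0$ for a.e.\ $u$; time reversal $s \mapsto T-s$ recasts this as the convolution identity $\Phi^\top * \tilde{\bs{\mu}} = \0$ on $[0,T-t_0]$, with $\tilde{\bs{\mu}}$ the pushforward of $\bs{\mu}$. Mollifying $\tilde{\bs{\mu}}$ by a smooth bump $\psi_\delta$ supported in $[0,\delta]$ places the identity in the $L^1_{\mathrm{loc}}$ framework of Kalisch's theorem (Appendix \ref{sec:convolution}); since $\det^*(\Phi^\top) = \det^*(\Phi)$ (a straightforward permutation argument) and $0 \in \esssupp\det^*(\Phi)$ by hypothesis, Kalisch's theorem forces $\psi_\delta * \tilde{\bs{\mu}}$ to vanish on an initial segment of length $T-t_0-\delta$. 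Sending $\delta \downarrow 0$ yields $\bs{\mu} = \0$, completing the density argument and thus the proof.

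The main obstacle I anticipate is the final step: Kalisch's multivariate convolution theorem is formulated for matrix-valued $L^1_{\mathrm{loc}}$ functions, not for convolutions of matrices with vector measures, so one must execute the mollification limit carefully, tracking how the vanishing segment interacts with the support of $\psi_\delta$ and ensuring that the hypothesis $0 \in \esssupp\det^*(\Phi)$ is not diluted in the limit. The rest of the argument—the decomposition \eqref{fundamental} and the associated independence/measurability split, the Gaussian support theorem, and the $L^2$-image identification of the Cameron--Martin space—is standard machinery once the duality reduction is in place.
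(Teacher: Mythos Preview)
Your argument is correct and follows the same overall architecture as the paper: use the decomposition \eqref{fundamental} plus independence/measurability to reduce to an unconditional full-support statement for $\bar{\X}^{t_0}$ (this is the paper's Lemma~\ref{sbp-lemma}), invoke Gaussian theory to reduce full support to sup-norm density of $\{T_\Phi\bs{h}:\bs{h}\in L^2\}$ in $C_{\bs{0}}$, and prove density via Kalisch's multivariate Titchmarsh theorem. Your use of the Cameron--Martin support theorem is interchangeable with the paper's Girsanov-plus-translation-invariance argument.

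The one substantive difference is in the density step. The paper (Lemma~\ref{dense-range}) shows that $\ran T_\Phi$ is dense in $L^2([t_0,T],\R^d)$ by taking an $L^2$-orthogonal annihilator $\bs{h}$, so that Kalisch applies directly to the $L^1_{\mathrm{loc}}$ function $\bs{h}(T-\cdot)$ with no mollification needed; the passage from $L^2$-density to sup-norm density is then relegated to Cherny's argument. You instead dualise directly in $C_{\bs{0}}$, obtain a vector \emph{measure} annihilator, and must mollify to land in the $L^1_{\mathrm{loc}}$ framework of Lemma~\ref{lem:KT}. Both routes work; the paper's $L^2$-duality is cleaner at the Kalisch step but incurs the $L^2\Rightarrow C_{\bs{0}}$ bridge, whereas yours is more self-contained at the cost of the mollification limit you correctly identified as the delicate point (it does go through, since $\psi_\delta*\tilde{\bs{\mu}}$ vanishes on all of $[0,T-t_0]$, forcing $\tilde{\bs{\mu}}=0$ on the open interval and then at the endpoint).
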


\begin{remark}\begin{enumerate}[label=(\roman*),ref=\roman*,leftmargin=2.2em]
\item One might wonder if it is possible to replace the condition \eqref{detstar} in Theorem \ref{Gaussian} with a slightly weaker condition, analogous to \eqref{uni-condition}, namely, that 
\begin{equation}\label{multi-det-condition}
\esssupp\mathrm{det}^* (\Phi) \neq \varnothing.
\end{equation}
Unfortunately, \eqref{multi-det-condition} does not suffice in general.
For example, let $\Phi(t)=\mathbf{1}_{[1,2]}(t) I_d$ and $\Psi(t)=\mathbf{1}_{[0,1]}(t) I_d$ for $t \in \R_+$, where $I_d \in \mathbb{M}_d$ is the identity matrix. Then
\begin{equation*}
\mathrm{det}^* (\Phi) = \underbrace{\mathbf{1}_{[1,2]} * \cdots * \mathbf{1}_{[1,2]}}_{d},
\end{equation*}
which follows immediately from the definition \eqref{def:det}. One can now show, e.g., using Titchmarsh's convolution theorem (Lemma \ref{Titchmarsh}, below) and induction in $d$, that \eqref{multi-det-condition} holds.
However,
\begin{equation*}
\bs{X}_1 = \int_{-\infty}^1 \big( \mathbf{1}_{[1,2]}(1-s)I_d - \mathbf{1}_{[0,1]}(-s)I_d\big)\ud \bs{L}_s = \int_{-1}^0 \ud \bs{L}_s-\int_{-1}^0 \ud \bs{L}_s= \bs{0},
\end{equation*}
which indicates that $(\bs{X}_t)_{t \in [0,T]}$ cannot have CFS for any $T\geqslant1$.
\item Theorem \ref{Gaussian} can be generalized to multivariate \emph{Brownian semistationary} ($\mathcal{BSS}$) processes, extending \cite[Theorem 3.1]{Pakkanen-2011}, as follows. Let $\bs{Y}=(\bs{Y}_t)_{t \geqslant 0}$ be a continuous process in $\R^d$ and let $\Sigma=(\Sigma_t)_{t \in \R}$ be a measurable process in $\mathbb{M}_d$ such that $\sup_{t \in \R}\E[\|\Sigma_t\|^2]<\infty$, both independent of the driving Brownian motion $\bs{L}$. Then
\begin{equation*}
\bs{Z}_t \eqdefl \bs{Y}_t + \int_{-\infty}^t \Phi(t-s) \Sigma_s \ud \bs{L}_s, \quad t \geqslant 0,
\end{equation*}
defines a $\mathcal{BSS}$ process in $\R^d$, which has a continuous modification if $Q^\Phi_1(h)+ Q^\Phi_2(h) = O(h^r)$, $h \rightarrow 0+$ for some $r>0$, where $Q^\Phi_1(h)$ and $Q^\Phi_2(h)$ are quantities related to the $L^2$ norm and $L^2$ modulus of continuity of $\Phi$, respectively,
defined by \eqref{Q-def1} and \eqref{Q-def2} below.
In this setting we could show, by adapting the proof of Theorem \ref{Gaussian} and the arguments in \cite[pp.\ 583--585]{Pakkanen-2011}, that
if
\begin{equation*}
\mathrm{Leb}(\{t \in [0,T] : \det(\Sigma_t) = 0\}) = 0 \quad \textrm{a.s.,}
\end{equation*} 
then the process $(\bs{Z}_t)_{t \in [0,T]}$ has CFS with respect to its natural filtration for any $T>0$. The assumption that $\Sigma$ and $\bs{Y}$ are independent of $\bs{L}$ could be relaxed somewhat by an obvious multivariate extension of the factor decomposition used in \cite[p.\ 582]{Pakkanen-2011}.
\end{enumerate}
\end{remark}

Let us then look into the non-Gaussian case with a pure jump process as the driver $\bs{L}$. In addition to that the condition \eqref{detstar} continues to hold, it is essential that the gamut of possible jumps of $\bs{L}$ is sufficiently rich. Consider for instance the case where the components of $\bs{L}$ have only positive jumps, $\Psi=0$, and the elements of $\Phi$ are non-negative. It is not difficult to see that the components of the resulting moving average process $\bs{X}$ will then be always non-negative --- an obvious violation of the CSBP.

To avoid such scenarios, we need to ensure, in particular, that $\bs{L}$ can move close to any point in $\R^d$ with arbitrarily small jumps.
To formulate this small jumps condition rigorously, we introduce, for any $\varepsilon>0$, the restriction of the L\'evy measure $\Lambda$ to the ball $B\big(\bs{0},\varepsilon)$ by
\begin{equation*}
\Lambda_\varepsilon(A) \eqdefl \Lambda \big(A \cap B\big(\bs{0},\varepsilon)\big),\quad A \in \mathscr{B}(\R^d).
\end{equation*}
We obtain the following result, which we shall prove in Section \ref{Levy-proof}.

\begin{theorem}[Non-Gaussian case]\label{Levy}
Suppose that the driving L\'evy process $\bs{L}$ is purely non-Gaussian, that is, $\mathfrak{S}=0$, and that the components of $\Phi$ are of finite variation. Assume, further, that $\X$ is c\`adl\`ag and $\bs{A}^{t_0}$, for any $t_0 \geqslant 0$, is continuous (modulo taking modifications). 
If $\Phi$ satisfies \eqref{detstar}, and if 
\begin{equation*}\label{Levy-support}
\bs{0} \in \interior \conv \supp \Lambda_\varepsilon \quad \textrm{for any $\varepsilon>0$,}\tag{JUMPS}
\end{equation*}
then $(\X_t)_{t \in [0,T]}$ has the CSBP with respect to $\big(\mathscr{F}^{\bs{L},\mathrm{inc}}_t\big)_{t \in [0,T]}$ for any $T>0$.
\end{theorem}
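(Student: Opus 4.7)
The strategy is to reduce the conditional small ball probability to an unconditional approximation problem for $\bar{\bs{X}}^{t_0}_t = \int_{t_0}^t \Phi(t-u)\ud \bs{L}_u$, then to solve it by combining density of the convolution image of $\Phi$ (furnished by \eqref{detstar}) with Simon's small deviations estimate (furnished by \eqref{Levy-support}), using the finite variation of $\Phi$ to transfer uniform closeness from $\bs{L}$ to the stochastic integral.

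\emph{Reduction.} Fix $t_0 \in [0, T)$, $\bs{f} \in C_{\bs{0}}([t_0, T], \R^d)$ and $\varepsilon > 0$. By \eqref{fundamental}, $\bs{X}_t - \bs{X}_{t_0} = \bar{\bs{X}}^{t_0}_t + \bs{A}^{t_0}_t$, where $\bs{A}^{t_0}$ is continuous (by assumption) and $\mathscr{F}^{\bs{L},\mathrm{inc}}_{t_0}$-measurable (being built from increments of $\bs{L}$ on $(-\infty, t_0]$), while $\bar{\bs{X}}^{t_0}$ uses only increments of $\bs{L}$ on $(t_0, \infty)$ and is hence independent of $\mathscr{F}^{\bs{L},\mathrm{inc}}_{t_0}$. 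Consequently the conditional probability in \eqref{eq:CSBP-prob} equals almost surely $\prob\big[\sup_{t \in [t_0, T]} \|\bar{\bs{X}}^{t_0}_t - \bs{g}(t)\| < \varepsilon\big]$ evaluated at the almost surely continuous random path $\bs{g} \eqdefl \bs{f} - \bs{A}^{t_0}$, which satisfies $\bs{g}(t_0) = \bs{0}$; a time shift using stationarity of the increments of $\bs{L}$ further reduces the task to showing that, for every $T > 0$, every $\bs{g} \in C_{\bs{0}}([0, T], \R^d)$, and every $\delta > 0$,
\begin{equation*}
\prob\bigg[\sup_{t \in [0, T]} \Big\| \int_0^t \Phi(t-u)\ud \bs{L}_u - \bs{g}(t)\Big\| < \delta\bigg] > 0.
\end{equation*}

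\emph{Density via \eqref{detstar}.} Next I would show, by a Hahn--Banach argument, that $\{\Phi * \bs{k} : \bs{k} \in C([0, T], \R^d)\}$ is dense in $C_{\bs{0}}([0, T], \R^d)$ under the sup norm. A continuous linear functional on $C_{\bs{0}}([0, T], \R^d)$ annihilating this set is represented by a finite $\R^d$-valued signed Borel measure $\bs{\mu}$ on $[0, T]$, and Fubini translates the annihilation into a convolution identity of the form $\bs{\nu} * \Phi = \bs{0}$ on $[0, T]$ for a reflection $\bs{\nu}$ of $\bs{\mu}$; Kalisch's multivariate generalisation of Titchmarsh's convolution theorem (Appendix \ref{sec:convolution}), combined with \eqref{detstar}, then forces $\bs{\nu} = \bs{0}$. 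Granted density, pick $\bs{k} \in C([0, T], \R^d)$ with $\sup_t \|(\Phi * \bs{k})(t) - \bs{g}(t)\| < \delta/2$ and set $\bs{h}(t) \eqdefl \int_0^t \bs{k}(u)\ud u$, so that $\bs{h} \in C^1_{\bs{0}}([0, T], \R^d)$ and $(\Phi * \bs{k})(t) = \int_0^t \Phi(t - u)\ud \bs{h}(u)$.

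\emph{Small deviations and stability.} Simon's small deviations result \cite{Simon-2001}, applied under \eqref{Levy-support}, yields $\prob\big[\sup_{t \in [0, T]}\|\bs{L}_t - \bs{h}(t)\| < \eta\big] > 0$ for every $\eta > 0$, since Skorohod and uniform neighbourhoods coincide at the continuous path $\bs{h}$. Because the components of $\Phi$ are of finite variation, pathwise integration by parts gives almost surely
\begin{equation*}
\int_0^t \Phi(t - u)\ud \bs{L}_u = \Phi(0+)\bs{L}_t - \int_0^t \bs{L}_u\, \ud_u \Phi(t - u),
\end{equation*}
with the analogous identity for $\bs{h}$ in place of $\bs{L}$; subtracting and using the total variation of $\Phi$ on $[0, T]$ yields a deterministic constant $C$ with
\begin{equation*}
\sup_{t \in [0, T]}\Big\| \int_0^t \Phi(t-u)\ud \bs{L}_u - \int_0^t \Phi(t-u)\ud \bs{h}(u) \Big\| \leqslant C \sup_{t \in [0, T]}\|\bs{L}_t - \bs{h}(t)\|.
\end{equation*}
Choosing $\eta < \delta/(2C)$ and combining with the density step yields the desired positive probability. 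The main obstacle will be the density step: marshalling \eqref{detstar} into a matrix convolution identity precisely of the form Kalisch's theorem consumes, and negotiating the passage between vector-valued measures and $L^1_{\mathrm{loc}}$ functions in that statement, is where the bulk of the technical bookkeeping lies.
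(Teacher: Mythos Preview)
Your proposal is correct and follows essentially the same route as the paper: reduce to an unconditional small ball statement for $\bar{\bs{X}}^{t_0}$ via the decomposition and independence (the paper's Lemma~\ref{sbp-lemma}), use \eqref{detstar} and Kalisch's theorem to get density of $\{\Phi*\bs{h}\}$ in $C_{\bs 0}$ (Lemma~\ref{dense-range}), then integrate by parts and invoke Simon's small deviations result (Corollary~\ref{small-ball-property}). The only notable difference is in the density step: the paper works in $L^2([0,T],\R^d)$ rather than $C_{\bs 0}$, so the annihilating functional is an $L^2$ function $\bs{h}$, and after the time-reversal/Fubini manoeuvre one obtains $\Phi^\top * \bs{h}(T-\cdot)=\bs 0$ with $\bs{h}(T-\cdot)\in L^2\subset L^1_{\mathrm{loc}}$, to which Lemma~\ref{lem:KT} applies directly. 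This sidesteps precisely the obstacle you flag about passing from vector-valued measures to $L^1_{\mathrm{loc}}$ functions in Kalisch's statement; your Hahn--Banach-on-$C_{\bs 0}$ variant would work too (e.g.\ by mollifying the annihilating measure), but the $L^2$ route is cleaner.
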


\begin{remark}\phantomsection \label{Levy-remark}
\begin{enumerate}[label=(\roman*),ref=\roman*,leftmargin=2.2em]
\item The proof of Theorem \ref{Levy} hinges on the assumption that the components of $\Phi$ are of finite variation. However, we believe that it should be possible to weaken this assumption to boundedness. Rosi\'nski \cite{Rosinski-1989} has shown that the fine properties of the sample paths of $\bs{X}$ are inherited from the fine properties of $\Phi$. As the fine properties of $\bs{X}$ are not actually ``seen'' by the sup norm used in the definition of the CSBP, it seems plausible that the finite variation assumption is immaterial and merely a limitation of the machinery used in the present proof.
\item In the univariate case, $d=1$, the condition \eqref{Levy-support} reduces to the simple requirement (cf.\ \cite[Proposition 1.1]{Aurzada-Dereich-2009}) that
\begin{equation*}\label{Levy-support-1}
\Lambda\big((-\varepsilon,0)\big)>0\quad \textrm{and}\quad \Lambda\big((0,\varepsilon)\big)>0 \quad \textrm{for any $\varepsilon>0$,}\tag{JUMPS$_1$}
\end{equation*}
which evidently rules out all processes with only positive jumps (e.g., Poisson processes) as drivers.
\item\label{finvarcase} The condition \eqref{Levy-support} could be replaced with a weaker, but more technical, condition that would require a similar support property to hold merely in the subspace
\begin{equation*}
\mathbb{H}_\Lambda \eqdefl \bigg\{ \bs{y} \in \R^d : \int_{\{ \| \bs{x}\| \leqslant 1\}} | \langle \bs{x},\bs{y} \rangle | \Lambda(\ud \bs{x}) < \infty \bigg\},
\end{equation*} 
where the jump activity of $\bs{L}$ has finite variation. In particular, if $\bs{L}$ has infinite variation in all directions in $\R^d$ in the sense that $\mathbb{H}_\Lambda=\{ \bs{0}\}$, then \eqref{Levy-support} can be dropped altogether.
 (In fact, $\mathbb{H}_\Lambda=\{ \bs{0}\}$ can be shown to imply \eqref{Levy-support}.)
\end{enumerate}
\end{remark}

%%%%%%%%%%%%%%%%%%%%%%%%%%%%%%%%%%%%%%%%%%%%%%%%%%%%%%%%%%%%%%%%%%%%%%%%%%%%%%%
%%%%%%%%%%%%%%%%%%%%%%%%%%%%%%%%%%%%%%%%%%%%%%%%%%%%%%%%%%%%%%%%%%%%%%%%%%%%%%%
\section{Applications and examples}\label{apps}
%%%%%%%%%%%%%%%%%%%%%%%%%%%%%%%%%%%%%%%%%%%%%%%%%%%%%%%%%%%%%%%%%%%%%%%%%%%%%%%
%%%%%%%%%%%%%%%%%%%%%%%%%%%%%%%%%%%%%%%%%%%%%%%%%%%%%%%%%%%%%%%%%%%%%%%%%%%%%%%

In this section, we discuss how to verify the conditions \eqref{detstar} and \eqref{Levy-support} that appear in Theorems \ref{Gaussian} and \ref{Levy}, and provide some concrete examples of processes, to which these results can be applied. However, first we look into the path regularity conditions of Theorems \ref{Gaussian} and \ref{Levy}. 

\subsection{Regularity conditions}

We have assumed in Theorems \ref{Gaussian} and \ref{Levy} that the process $\bs{A}^{t_0}$ is continuous for any $t_0 \geqslant 0$ and that $\bs{X}$ is continuous (resp.\ c\`adl\`ag) in Theorem \ref{Gaussian} (resp.\ Theorem \ref{Levy}). Unfortunately, there are no easily applicable, fully general results that could be used to check these conditions, and they need to be established more-or-less on a case-by-case basis.

In the case where $\E[\|\bs{L}_1\|^2]<\infty$ and $\E[\bs{L}_1]=0$, fairly tractable sufficient criteria for these regularity conditions can be given. To this end, define 
\begin{align}
Q^\Phi_1(h) &\eqdefl \int_0^h \| \Phi(u)\|^2 \ud u,\label{Q-def1} \\
Q^\Phi_2(h) & \eqdefl \int_0^\infty \| \Phi(u+h) - \Phi(u) \|^2 \ud u.\label{Q-def2}
\end{align}
Using \cite[Proposition 2.1]{Marquardt-2006}, we find that for any $t_0 \geqslant 0$,
\begin{align}
\E[\|\bs{X}_{t+h}-\bs{X}_t\|^2] & \leqslant \E[\|\bs{L}_1\|^2] \big(Q^\Phi_1(h) +  Q^\Phi_2(h)\big),  & t &\geqslant 0,\quad h \geqslant 0,\label{continuity-1} \\
\E\big[\big\|\bs{A}^{t_0}_{t+h}-\bs{A}^{t_0}_t\big\|^2\big] & \leqslant \E[\|\bs{L}_1\|^2] Q^\Phi_2(h), & t &\geqslant t_0,\quad h \geqslant 0.\label{continuity-2}
\end{align}
Thus, $\bs{A}^{t_0}$ has a continuous modification by the Kolmogorov--Chentsov criterion if $Q^\Phi_2(h) = O(h^{r_2})$, $h \rightarrow 0+$, for some $r_2>1$. If additionally $Q^\Phi_1(h) = O(h^{r_1})$, $h \rightarrow 0+$, for some $r_1>1$, then $\bs{X}$ has a continuous modification as well. (When $\bs{L}$ is Gaussian, $r_i>0$, $i=1,2$, suffices.) 

When $\bs{A}^{t_0}$ is known, a priori, to be continuous for any $t_0 \geqslant 0$, it follows from \eqref{fundamental}, with $t_0=0$, that the process $\bs{X}$ is continuous (resp.\ c\`adl\`ag) provided that
\begin{equation*}
\bar{\bs{X}}^0_t = \int_0^t \Phi(t-u) \ud \bs{L}_u, \quad t \geqslant 0,
\end{equation*}
is continuous (resp.\ c\`adl\`ag). The path regularity of $\bar{\bs{X}}^0$ becomes an intricate question when $\bs{L}$ is purely non-Gaussian.
Then, a necessary condition for $\bar{\bs{X}}^0$ to have almost surely bounded trajectories --- which is also necessary for the c\`adl\`ag property --- is that $\Phi$ is bounded \cite[Theorem 4]{Rosinski-1989}. For $\bar{\bs{X}}^0$ to be continuous, it is necessary that $\Phi$ is continuous and that $\Phi(0)=0$ \cite[Theorem 4]{Rosinski-1989}. While necessary, these two conditions are not sufficient for the continuity of $\bar{\bs{X}}^0$, however; see \cite[Theorem 3.1]{Kwapien-Marcus-Rosinski-2006}.

Basse and Pedersen \cite{Basse-Pedersen-2009} have obtained sufficient conditions for the continuity of the process $\bar{\bs{X}}^0$. In particular, their results ensure that, in the case where the components of $\bs{L}$ are of finite variation ($\mathfrak{S}=0$ and $\mathbb{H}_\Lambda = \R^d$; see Remark
\ref{Levy-remark}\eqref{finvarcase}), $\bar{\bs{X}}^0$ is c\`adl\`ag (and of finite variation) if the elements of $\Phi$ are of finite variation (which is one of the assumptions used in Theorem \ref{Levy}). In the case where $\bs{L}$ is of infinite variation, the corresponding sufficient condition is more subtle; we refer to \cite[Theorem 3.1]{Basse-Pedersen-2009} for details. Finally, we mention the results of Marcus and Rosi\'nski \cite{Marcus-Rosinski-2005} concerning the continuity of infinitely divisible processes using majorizing measures and metric entropy conditions, which could be applied to study the continuity of $\bs{X}$, $\bar{\bs{X}}^{t_0}$, and $\bs{A}^{t_0}$, $t_0 \geqslant 0$.

\subsection{Kernel functions that satisfy~\eqref{detstar}}

\subsubsection{The Mandelbrot--Van Ness kernel function}\label{frac-Levy} Consider the univariate case, $d=1$, where the processes $\bs{X}$ and $\bs{L}$ reduce to univariate processes $X$ and $L$ and the kernel functions $\Phi$ and $\Psi$ to real-valued functions $\phi$ and $\psi$, respectively. Define, for any $H \in (0,1)$,
\begin{equation}\label{power-kernel}
\phi(t) \eqdefl \psi(t) \eqdefl C_H t^{H-\frac{1}{2}}_+, \quad t \in \R,
\end{equation}
where $x_+ \eqdefl \max \{x,0\}$ for any $x \in \R$ and
\begin{equation*}
C_H \eqdefl \frac{\sqrt{2H \sin (\pi H) \Gamma(2H)}}{\Gamma(H+\frac{1}{2})},
\end{equation*}
which is defined using the \emph{gamma function} $\Gamma(t) \eqdefl \int_0^\infty x^{t-1}e^{-x} \ud x$, $t > 0$.
Then
\begin{equation*}
k(t,s) \eqdefl \phi(t-s) - \psi(-s) = C_H \Big( (t-s)^{H-\frac{1}{2}}_+-(-s)^{H-\frac{1}{2}}_+\Big), \quad (t,s) \in \R^2,
\end{equation*}
is the so-called \emph{Mandelbrot--Van Ness} kernel function (introduced in \cite{Mandelbrot-Van-Ness-1968}) of \emph{fractional Brownian motion} (fBm). That is, with a standard Brownian motion as the driver $L$, the univariate moving average process
\begin{equation}\label{fractional-process}
X_t = \int_{-\infty}^t k(t,s) \ud L_s, \quad t  \geqslant 0,
\end{equation} 
is an fBm with Hurst index $H \in (0,1)$.

Eschewing the fBm, which is already known to have CFS (see \cite{Cherny-2008}), we consider the process \eqref{fractional-process} in the case where the driver $L$ is purely non-Gaussian. Such a process $X$ is called a \emph{fractional L\'evy process}, introduced by Marquardt \cite{Marquardt-2006}. It was shown in \cite{Marquardt-2006}, that if $H\in (\frac{1}{2},1)$, $\E[L^2_1]<\infty$ and $\E[L_1]=0$, then the fractional L\'evy process is well-defined (conditions \eqref{Int-1}, \eqref{Int-2}, and \eqref{Int-3} are satisfied) and has a continuous modification. 
As a consequence of Theorem \ref{Levy}, we obtain:

\begin{corollary}[Fractional L\'evy process] Let $(X_t)_{t \in \R_+}$ be a fractional L\'evy process, given by \eqref{fractional-process}, with $H\in (\frac{1}{2},1)$ and a purely non-Gaussian driving L\'evy process $L$ such that $\E[L^2_1]<\infty$ and $\E[L_1]=0$. If the L\'evy measure $\Lambda$ of $L$ satisfies \eqref{Levy-support-1}, then $(X_t)_{t \in [0,T]}$ has CFS with respect to $\big(\mathscr{F}^{L,\mathrm{inc}}_t\big)_{t \in [0,T]}$ for any $T>0$.
\end{corollary}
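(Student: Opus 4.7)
The plan is to apply Theorem~\ref{Levy} in the univariate setting $d=1$, where the matrix-valued kernel $\Phi$ reduces to the scalar $\phi$ of~\eqref{power-kernel}. Marquardt \cite{Marquardt-2006} has shown that, under the hypotheses $H\in(\tfrac{1}{2},1)$, $\E[L_1^2]<\infty$, and $\E[L_1]=0$, the integrability conditions \eqref{Int-1}--\eqref{Int-3} are satisfied and $X$ admits a continuous (hence c\`adl\`ag) modification, which I adopt throughout. Thus it suffices to verify the four remaining hypotheses of Theorem~\ref{Levy}: (a) $\phi$ is of bounded variation on compact intervals; (b) $\phi$ satisfies~\eqref{detstar}; (c) $A^{t_0}$ is continuous for every $t_0\geqslant 0$; (d) $\Lambda$ satisfies~\eqref{Levy-support}.

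Items (a), (b), (d) are essentially immediate. For (a), since $H-\tfrac{1}{2}\in(0,\tfrac{1}{2})$, the function $\phi(t)=C_H t_+^{H-1/2}$ is continuous and non-decreasing on $\R_+$, hence of bounded variation on every compact interval. For (b), Definition~\ref{convdet-definition} applied with $n=1$ yields $\mathrm{det}^*(\phi)=\phi$ (the only element of $S_1$ is the identity, with no convolution factors), and $\phi>0$ on $(0,\infty)$ gives $\esssupp\phi=\R_+\ni 0$. For (d), as recorded in Remark~\ref{Levy-remark}, in the univariate case \eqref{Levy-support} coincides with the assumed condition \eqref{Levy-support-1}.

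The substantive step is (c). By the bound~\eqref{continuity-2}, Kolmogorov--Chentsov produces a continuous modification of $A^{t_0}$ as soon as $Q^\phi_2(h)=O(h^r)$ for some $r>1$. A direct scaling computation, paralleling the evaluation of the incremental variance of fractional Brownian motion in the Mandelbrot--Van Ness representation, gives
\begin{equation*}
Q^\phi_2(h)=C_H^2\int_0^\infty\big((u+h)^{H-1/2}-u^{H-1/2}\big)^2\,\ud u = c_H\, h^{2H},
\end{equation*}
where $c_H=C_H^2\int_0^\infty\!\big((v+1)^{H-1/2}-v^{H-1/2}\big)^2\ud v$ (substitute $u=hv$). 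This constant is finite: $H>\tfrac{1}{2}$ keeps the integrand bounded at $v=0$, while at infinity $(v+1)^{H-1/2}-v^{H-1/2}\sim(H-\tfrac{1}{2})v^{H-3/2}$ gives a square-integrable tail precisely because $H<1$. Since $2H>1$, the required H\"older-type moment bound holds with $r=2H$.

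With (a)--(d) in hand, Theorem~\ref{Levy} furnishes the CSBP for $(X_t)_{t\in[0,T]}$ with respect to $\big(\mathscr{F}^{L,\mathrm{inc}}_t\big)_{t\in[0,T]}$, and since $X$ is continuous this is by Definition~\ref{CSBP-def} the CFS property. The only non-trivial obstacle is the path-regularity check (c), but it reduces to the standard scaling identity for the Mandelbrot--Van Ness kernel.
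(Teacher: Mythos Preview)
Your proof is correct and follows essentially the same approach as the paper: verify the finite-variation and \eqref{detstar} conditions on $\phi$ directly (monotonicity, positivity), invoke Remark~\ref{Levy-remark} for the reduction of \eqref{Levy-support} to \eqref{Levy-support-1}, and check the path regularity of $A^{t_0}$ via the bound~\eqref{continuity-2}. The only cosmetic difference is that the paper cites \cite[Theorem~4.1]{Marquardt-2006} for the needed estimate on $Q^\phi_2$, whereas you carry out the scaling computation $Q^\phi_2(h)=c_H h^{2H}$ explicitly; both lead to the same application of Kolmogorov--Chentsov.
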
 
\begin{proof}
The kernel function $\phi$, given by \eqref{power-kernel}, is monotonic and, thus, of finite variation. Additionally, it clearly satisfies the univariate  version of the condition \eqref{detstar}, namely,
\begin{equation}\label{uni-detstar}
0 \in \esssupp \phi.
\end{equation}
The path regularity conditions of Theorem \ref{Levy} can be checked via the criteria \eqref{continuity-1} and \eqref{continuity-2} using 
 \cite[Theorem 4.1]{Marquardt-2006}.\end{proof}

\subsubsection{Regularly varying kernel functions}

The Mandelbrot--Van Ness kernel function can be generalized by retaining the power-law behavior of $\phi$ near zero, but allowing for more general behavior near infinity. This makes it possible to define processes that, say, behave locally like the fBm, in terms of H\"older regularity, but are not long-range dependent. (In effect, this amounts to dispensing with the self-similarity property of the fBm.)

A convenient way to construct such generalizations is to use the concept of \emph{regular variation} (see \cite{Bingham-Goldie-Teugels-1987} for a treatise on the theory of regular variation). Let us recall the basic definitions:

\begin{definition}
\begin{enumerate}[label=(\roman*),ref=\roman*,leftmargin=2.2em]
\item A measurable function $h : \R_+ \rightarrow \R_+$ is \emph{slowly varying} at zero if
\begin{equation*}
\lim_{t \rightarrow 0+} \frac{h(\ell t)}{h(t)} = 1, \quad \textrm{for all $\ell >0$.}
\end{equation*}
\item A measurable function $f : \R_+ \rightarrow \R_+$ is \emph{regularly varying} at zero, with index $\alpha\in \R$, if
\begin{equation*}
\lim_{t \rightarrow 0+} \frac{f(\ell t)}{f(t)} = \ell^\alpha, \quad \textrm{for all $\ell >0$.}
\end{equation*}
We write then $f \in \mathscr{R}_0(\alpha)$.
\end{enumerate}
\end{definition}

\begin{remark}
Clearly, $f \in \mathscr{R}_0(\alpha)$ if and only if $f(t) = t^\alpha h(t)$, $t >0$, for some slowly varying function $h$. Intuitively, $f$ behaves then near zero essentially like the power function $t \mapsto t^\alpha$, as the slowly varying function $h$ varies ``less'' than any power function near zero, in view of \emph{Potter's bounds} (Lemma \ref{potter} in Appendix \ref{RV-app}).
\end{remark}

We discuss now, how to check the condition \eqref{detstar} for a multivariate kernel function $\Phi$, whose components are regularly varying at zero. Checking \eqref{detstar} is then greatly facilitated by the fact that convolution and addition preserve the regular variation property at zero. We prove the first of the following two lemmas in Appendix \ref{RV-app}, while the second follows from an analogous result for regular variation at infinity \cite[Proposition 1.5.7(iii)]{Bingham-Goldie-Teugels-1987}, since $f \in \mathscr{R}_0(\alpha)$ if and only if $x \mapsto f(1/x)$ is regularly varying at infinity with index $-\alpha$; see \cite[pp.\ 17--18]{Bingham-Goldie-Teugels-1987}.

\begin{lemma}[Convolution and regular variation at zero]\label{lem:conv}
Suppose that $f \in \mathscr{R}_0(\alpha) \cap L^1_{\mathrm{loc}}(\R_+,\R_+)$ and $g \in \mathscr{R}_0(\beta) \cap L^1_{\mathrm{loc}}(\R_+,\R_+)$ for some $\alpha>-1$ and $\beta >-1$.
Then,
\begin{equation}\label{rv-conv}
(f * g)(t) \sim \mathrm{B}(\alpha+1,\beta+1) t f(t) g(t), \quad t \rightarrow 0+,
\end{equation}
where $\mathrm{B}(t,u) \eqdefl \int_0^1 s^{u-1} (1-s)^{t-1} \ud s$, $(t,u) \in \R^2_{++}$, is the beta function. Consequently, $f * g \in \mathscr{R}_0(\alpha+\beta+1)$.  
\end{lemma}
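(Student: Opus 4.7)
The plan is to reduce the convolution to an integral on $[0,1]$ by rescaling, and then apply dominated convergence using Potter's bounds to justify passing the limit inside. Specifically, for $t>0$ make the substitution $u=ts$ in the definition of the convolution to obtain
\begin{equation*}
(f*g)(t) \;=\; \int_0^t f(t-u)g(u)\,\mathrm{d}u \;=\; t\int_0^1 f\bigl(t(1-s)\bigr)\, g(ts)\,\mathrm{d}s.
\end{equation*}
Dividing by $t f(t)g(t)$ reduces the target asymptotic \eqref{rv-conv} to the statement
\begin{equation*}
\lim_{t\to 0+}\int_0^1 \frac{f\bigl(t(1-s)\bigr)}{f(t)}\,\frac{g(ts)}{g(t)}\,\mathrm{d}s \;=\; \int_0^1 (1-s)^{\alpha} s^{\beta}\,\mathrm{d}s \;=\; \mathrm{B}(\alpha+1,\beta+1),
\end{equation*}
where the pointwise convergence of the integrand on $(0,1)$ follows directly from $f\in\mathscr{R}_0(\alpha)$ and $g\in\mathscr{R}_0(\beta)$.

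The main obstacle is dominating the integrand uniformly in $t$ near the endpoints $s=0$ and $s=1$, where the ratios $g(ts)/g(t)$ and $f(t(1-s))/f(t)$ can blow up. Here I would invoke Potter's bounds at zero (Lemma \ref{potter} in Appendix \ref{RV-app}): for any $\delta>0$ small enough that $\alpha-\delta>-1$ and $\beta-\delta>-1$, there exists $T>0$ and $A>1$ such that for all $0<t\leqslant T$ and $s\in(0,1)$,
\begin{equation*}
\frac{f\bigl(t(1-s)\bigr)}{f(t)} \leqslant A\,(1-s)^{\alpha-\delta}, \qquad \frac{g(ts)}{g(t)} \leqslant A\, s^{\beta-\delta},
\end{equation*}
since both $t(1-s)$ and $ts$ are at most $t\leqslant T$, and the maximum in Potter's bound reduces to the exponent with the minus sign because $1-s,s\in(0,1)$. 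The product $A^2 (1-s)^{\alpha-\delta} s^{\beta-\delta}$ is integrable on $(0,1)$ by the choice of $\delta$, so dominated convergence applies and yields the stated limit.

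This proves \eqref{rv-conv}. The regular variation statement $f*g\in\mathscr{R}_0(\alpha+\beta+1)$ then follows formally: writing $f(t)=t^\alpha h_1(t)$ and $g(t)=t^\beta h_2(t)$ with $h_1,h_2$ slowly varying at zero, \eqref{rv-conv} gives $(f*g)(t)\sim \mathrm{B}(\alpha+1,\beta+1)\, t^{\alpha+\beta+1} h_1(t)h_2(t)$, and the product of two slowly varying functions is slowly varying, so the stated index of regular variation follows. No subtleties beyond the dominated convergence step are expected; the only care needed is to choose $\delta$ strictly less than $\min\{\alpha+1,\beta+1\}$ so that the Potter envelope remains integrable on $[0,1]$.
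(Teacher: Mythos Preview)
Your proof is correct and follows essentially the same route as the paper's: rescale the convolution to an integral over $[0,1]$, identify the pointwise limit via regular variation, and use Potter's bounds to produce an integrable dominant for dominated convergence. Your observation that the maximum in Potter's bound reduces to the exponent $\alpha-\delta$ (resp.\ $\beta-\delta$) because $1-s,s\in(0,1)$ is a slight streamlining of the paper's argument, which instead bounds by $(1-s)^{(\alpha-\varepsilon)_-}$ and $s^{(\beta-\delta)_-}$; and your deduction of $f*g\in\mathscr{R}_0(\alpha+\beta+1)$ via the slowly-varying factorization is equivalent to the paper's direct ratio computation.
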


\begin{lemma}[Addition and regular variation at zero]\label{lem:add}
If $f \in \mathscr{R}_0(\alpha)$ and $g \in \mathscr{R}_0(\beta)$ for some $\alpha \in \R$ and $\beta \in \R$, then $f+g \in \mathscr{R}_0(\min\{\alpha,\beta\})$.
\end{lemma}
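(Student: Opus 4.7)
The plan is to reduce to the univariate comparison between two regularly varying summands and handle the easy equal-index case separately from the strict case where one index dominates. Without loss of generality assume $\alpha \leqslant \beta$, so that $\min\{\alpha,\beta\}=\alpha$. For any $\ell>0$ I want to show that
\begin{equation*}
\lim_{t\rightarrow 0+} \frac{f(\ell t)+g(\ell t)}{f(t)+g(t)} = \ell^\alpha.
\end{equation*}
Note that, as a consequence of the defining limits being finite and nonzero, both $f$ and $g$ are strictly positive on some deleted neighbourhood of $0$, so the ratios appearing below are well defined for $t$ sufficiently small.

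If $\alpha=\beta$, then splitting
\begin{equation*}
\frac{f(\ell t)+g(\ell t)}{f(t)+g(t)} = \frac{f(\ell t)}{f(t)} \cdot \frac{f(t)}{f(t)+g(t)} + \frac{g(\ell t)}{g(t)} \cdot \frac{g(t)}{f(t)+g(t)}
\end{equation*}
and using $f(\ell t)/f(t) \rightarrow \ell^\alpha$, $g(\ell t)/g(t) \rightarrow \ell^\alpha$ yields the claim at once, since the two convex weights sum to $1$. So the substantive case is $\alpha < \beta$.

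In the case $\alpha < \beta$, the first step is to prove that $g(t)/f(t) \rightarrow 0$ as $t \rightarrow 0+$. Writing $f(t)=t^\alpha L_f(t)$ and $g(t)=t^\beta L_g(t)$ with $L_f,L_g$ slowly varying at $0$, I have $g(t)/f(t) = t^{\beta-\alpha}\, L_g(t)/L_f(t)$. Fix $\delta$ with $0<\delta<(\beta-\alpha)/2$. Potter's bounds (Lemma \ref{potter}) applied separately to the slowly varying functions $L_f$ and $L_g$ at $0$ give, for $t$ small enough, $L_g(t) \leqslant C t^{-\delta}$ and $L_f(t) \geqslant c t^{\delta}$ for some constants $C, c > 0$; hence $g(t)/f(t) \leqslant (C/c)\, t^{\beta-\alpha-2\delta} \rightarrow 0$. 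Once this is established, I rewrite
\begin{equation*}
\frac{f(\ell t)+g(\ell t)}{f(t)+g(t)} = \frac{\dfrac{f(\ell t)}{f(t)} + \dfrac{g(\ell t)}{g(t)}\cdot \dfrac{g(t)}{f(t)}}{1 + \dfrac{g(t)}{f(t)}}.
\end{equation*}
The denominator tends to $1$; in the numerator the first term tends to $\ell^\alpha$ while the second tends to $\ell^\beta \cdot 0 = 0$, so the whole quotient tends to $\ell^\alpha$, as required.

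I do not foresee a genuine obstacle: the only non-cosmetic step is the domination $g(t)/f(t) \rightarrow 0$, and this is a direct application of Potter's bounds already in hand in Appendix \ref{RV-app}. The rest is an elementary algebraic manipulation and passage to the limit.
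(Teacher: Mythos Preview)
Your proof is correct. Both the equal-index case (via the convex-combination argument, which works because the two ratios tend to the same limit and the weights, though possibly oscillating, stay in $[0,1]$ and sum to $1$) and the strict-index case (via Potter's bounds to force $g(t)/f(t)\to 0$) go through without issue.

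The paper takes a different and much shorter route: it simply observes that $f\in\mathscr{R}_0(\alpha)$ if and only if $x\mapsto f(1/x)$ is regularly varying at infinity with index $-\alpha$, and then invokes the corresponding closure-under-addition result for regular variation at infinity from Bingham--Goldie--Teugels \cite[Proposition 1.5.7(iii)]{Bingham-Goldie-Teugels-1987}. So the paper outsources the work to a textbook, whereas you give a direct, self-contained argument that stays entirely within the paper's own Appendix~\ref{RV-app} (only Lemma~\ref{potter} is used). Your approach is slightly longer but more transparent and avoids the external reference; the paper's approach is terser and exploits the standard duality between regular variation at $0$ and at $\infty$.
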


Using Lemmas \ref{lem:conv} and \ref{lem:add}, we can establish \eqref{detstar} for regularly varying multivariate kernel functions under an algebraic constraint on the indices of regular variation.

\begin{prop}[Regularly varying kernels]\label{rv-detstar}
Let $d \geqslant 2$ and let $\Phi = [\Phi_{i,j}]_{(i,j) \in \{1,\ldots,d \}^2} \in L^1_{\mathrm{loc}}(\R_+,\mathbb{M}_d)$ be such that $\Phi_{i,j} \in \mathscr{R}_0(\alpha_{i,j})$ for some $\alpha_{i,j} > -1$ for all $(i,j) \in \{1,\ldots,d \}^2$. Define\begin{equation*}
\alpha^\pm \eqdefl \min \{ \alpha_{1,\sigma(1)}+\cdots+\alpha_{d,\sigma(d)} : \sigma \in S_d,\, \mathrm{sgn}(\sigma)=\pm 1\}.
\end{equation*}
If $\alpha^+ \neq \alpha^-$, then $\Phi$ satisfies \eqref{detstar}.
\end{prop}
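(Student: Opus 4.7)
The plan is to pin down the leading-order behavior of $\mathrm{det}^*(\Phi)(t)$ as $t \to 0+$ and exploit the hypothesis $\alpha^+ \neq \alpha^-$ to rule out cancellation among the terms of maximal order. From this one concludes that $\mathrm{det}^*(\Phi)$ is strictly nonzero on some interval $(0,\delta)$, which immediately yields \eqref{detstar}.

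First, I would establish, by induction on $d$ using Lemma~\ref{lem:conv} as the inductive step, that for every $\sigma \in S_d$,
\[
(\Phi_{1,\sigma(1)} * \cdots * \Phi_{d,\sigma(d)})(t) \;\sim\; C_\sigma\, t^{d-1} \prod_{i=1}^d \Phi_{i,\sigma(i)}(t), \quad t \to 0+,
\]
where $C_\sigma > 0$ is a finite product of beta function values; in particular the convolution belongs to $\mathscr{R}_0(\alpha^\sigma + d - 1)$, with $\alpha^\sigma := \sum_{i=1}^d \alpha_{i,\sigma(i)}$. The induction goes through because after $k$ convolutions the accumulated index is $\sum_{i=1}^k \alpha_{i,\sigma(i)} + (k-1) > -k + (k-1) = -1$, so Lemma~\ref{lem:conv} keeps applying; local integrability is preserved along the way since the convolution of $L^1_{\mathrm{loc}}$ functions is $L^1_{\mathrm{loc}}$.

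Next, assume without loss of generality that $\alpha^- < \alpha^+$ (the reverse case is symmetric). Set $\Sigma^* := \{\sigma \in S_d : \alpha^\sigma = \alpha^-\}$. The strict inequality $\alpha^- < \alpha^+$ together with the definitions of $\alpha^\pm$ forces every $\sigma \in \Sigma^*$ to be odd, so all terms of $\mathrm{det}^*(\Phi)$ indexed by $\Sigma^*$ carry the common sign $-1$. Since each $\Phi_{i,j}$ is nonnegative (as a member of $\mathscr{R}_0$) and strictly positive for all $t$ small enough, and since all $C_\sigma > 0$, these terms cannot cancel; they produce a leading contribution
\[
D(t) \;:=\; -\,t^{d-1} \sum_{\sigma \in \Sigma^*} C_\sigma \prod_{i=1}^d \Phi_{i,\sigma(i)}(t),
\]
which is strictly negative on some right-neighborhood of $0$ and lies in $\mathscr{R}_0(\alpha^- + d - 1)$ with a positive slowly varying part.

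For every $\sigma \notin \Sigma^*$ one has $\alpha^\sigma > \alpha^-$, so the corresponding convolution is regularly varying at $0$ with a strictly larger index than $D$. Writing each such term as $t^{\alpha^\sigma + d - 1}$ times a positive slowly varying function $h_\sigma$, and noting that the ratio of two positive slowly varying functions is slowly varying, Potter's bounds (Lemma~\ref{potter}) imply that each term is $o(D(t))$ as $t \to 0+$. Summing over the finitely many $\sigma \notin \Sigma^*$ gives $\mathrm{det}^*(\Phi)(t) \sim D(t)$, which is strictly nonzero on some interval $(0,\delta)$, whence $0 \in \esssupp \mathrm{det}^*(\Phi)$. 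The main subtlety is the bookkeeping of the various slowly varying factors when comparing the $d!$ terms, but this remains manageable because slow variation is stable under products, finite sums of positive functions, and ratios, so Potter's bounds apply uniformly over the non-dominant terms.
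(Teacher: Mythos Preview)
Your argument is correct and follows essentially the same route as the paper's proof: both identify, via Lemma~\ref{lem:conv}, that each convolution $\Phi_{1,\sigma(1)}*\cdots*\Phi_{d,\sigma(d)}$ lies in $\mathscr{R}_0(\alpha^\sigma+d-1)$, then use Potter's bounds to show that the terms of minimal index cannot be cancelled by the higher-order ones, the assumption $\alpha^+\neq\alpha^-$ guaranteeing that all minimal-index terms carry the same sign. The only organizational difference is that the paper groups by permutation sign first---writing $\mathrm{det}^*(\Phi)=\phi^+-\phi^-$ with $\phi^\pm$ the sum over even/odd permutations---and then invokes Lemma~\ref{lem:add} once to conclude $\phi^\pm\in\mathscr{R}_0(\alpha^\pm+d-1)$; the final step is simply that a difference of two regularly varying functions with distinct indices cannot vanish near zero. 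This packaging spares you the explicit bookkeeping of the set $\Sigma^*$ and the separate treatment of the slowly varying factors, but the underlying idea is identical.
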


\begin{remark}\phantomsection \label{rv-example}
\begin{enumerate}[label=(\roman*),ref=\roman*,leftmargin=2.2em]
\item In the bivariate case, $d=2$, the condition $\alpha^+ \neq \alpha^-$ simplifies to
\begin{equation}\label{bivariate-rv}
\alpha_{1,1} + \alpha_{2,2} \neq \alpha_{1,2} + \alpha_{2,1}.
\end{equation}
\item\label{rv-example-2} The condition $\alpha^+ \neq \alpha^-$ is far from optimal, as it relies on fairly crude information (the indices of regular variation) on the components of $\Phi$. To illustrate this, consider for example
\begin{equation*}
\Phi(t) \eqdefl \begin{bmatrix}
t^{\alpha_{1,1}} & t^{\alpha_{1,2}} \\
t^{\alpha_{2,1}} &  t^{\alpha_{2,2}}
\end{bmatrix}, \quad t >0,
\end{equation*}
where $\alpha_{1,1}, \alpha_{1,2}, \alpha_{2,1}, \alpha_{2,2} > -1$. 
We have
\begin{equation*}
\begin{split}
\mathrm{det}^*(\Phi)(t) & = \mathrm{B}(\alpha_{1,1}+1,\alpha_{2,2}+1) t^{\alpha_{1,1}+\alpha_{2,2}+1} \\ 
& \qquad -\mathrm{B}(\alpha_{2,1}+1,\alpha_{1,2}+1)t^{\alpha_{2,1}+\alpha_{1,2}+1} , \quad t \geqslant 0.
\end{split}
\end{equation*}
Take, say, $\alpha_{1,1} = 2$, $\alpha_{1,2} = 1$, $\alpha_{2,1}= 3$, and $\alpha_{2,2}=2$. Then \eqref{bivariate-rv} does not hold but \eqref{detstar} holds since $\mathrm{B}(3,3) \neq \mathrm{B}(2,4)$.

\item The definition of regular variation dictates that, under the assumptions of Proposition \ref{rv-detstar}, the elements of $\Phi$ must be non-negative, which may be too restrictive. To remove this constraint, it is useful to note that if $\Phi$ satisfies $\eqref{detstar}$ then the kernel $A\Phi(t)$, $t \in \R_+$, for any invertible $A \in \mathbb{M}_d$ also satisfies $\eqref{detstar}$; see Lemma \ref{lem:convdet}\eqref{it:product}.
\end{enumerate} 
\end{remark}

\begin{proof}[Proof of Proposition \ref{rv-detstar}]
Write first
\begin{equation*}
\mathrm{det}^*(\Phi) = \underbrace{\sum_{\sigma \in S_d : \mathrm{sgn}(\sigma)=1} \Phi_{1,\sigma(1)} * \cdots * \Phi_{d,\sigma(d)}}_{\eqdefr \phi^+} - \underbrace{\sum_{\sigma \in S_d : \mathrm{sgn}(\sigma)=-1} \Phi_{1,\sigma(1)} * \cdots * \Phi_{d,\sigma(d)},}_{\eqdefr \phi^-}
\end{equation*}
and note that, by Lemmas \ref{lem:conv} and \ref{lem:add}, we have $\phi^\pm \in \mathscr{R}_0(\alpha^\pm + d-1)$. As a difference of two functions that are regularly varying at zero with different indices, the convolution determinant $\mathrm{det}^*(\Phi)$ cannot vanish in a neighborhood of zero, in view of Potter's bounds (Lemma \ref{potter}, below). Thus, $\Phi$ satisfies \eqref{detstar}.
\end{proof}

\subsubsection{Triangular kernel functions}

When the kernel function $\Phi$ is \emph{upper} or \emph{lower triangular}, the condition \eqref{detstar} becomes very straightforward to check. In fact, it suffices that the diagonal elements of $\Phi$ satisfy the univariate counterpart of \eqref{detstar}.

\begin{prop}[Triangular kernel functions]
Let $d \geqslant 2$ and let $\Phi = [\Phi_{i,j}]_{(i,j) \in \{1,\ldots,d \}^2} \in L^1_{\mathrm{loc}}(\R_+,\mathbb{M}_d)$ be such that $i<j \Rightarrow \Phi_{i,j} = 0$ or $i>j \Rightarrow \Phi_{i,j} = 0$. If
\begin{equation}\label{uni-detstar-2}
0 \in \esssupp \Phi_{i,i} \quad \textrm{for any $i=1,\ldots,d$,}
\end{equation}
then $\Phi$ satisfies \eqref{detstar}.
\end{prop}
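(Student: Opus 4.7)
The plan is to reduce $\mathrm{det}^*(\Phi)$ to a single convolution of the diagonal entries and then invoke Titchmarsh's convolution theorem.

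First I would observe the combinatorial fact that for a triangular matrix only the identity permutation contributes to $\mathrm{det}^*(\Phi)$. Suppose, say, that $\Phi$ is upper triangular, so $\Phi_{i,j} = 0$ whenever $i > j$. For any $\sigma \in S_d$ with $\sigma \neq \mathrm{id}$, a standard counting argument shows that there must exist some index $i$ with $\sigma(i) < i$ (otherwise $\sigma(i) \geqslant i$ for every $i$, which forces $\sigma = \mathrm{id}$). For such an $i$ we have $\Phi_{i,\sigma(i)} = 0$ identically, so the convolution $\Phi_{1,\sigma(1)} * \cdots * \Phi_{d,\sigma(d)}$ vanishes. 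The lower triangular case is symmetric. Consequently, from \eqref{def:det},
\begin{equation*}
\mathrm{det}^*(\Phi) = \Phi_{1,1} * \Phi_{2,2} * \cdots * \Phi_{d,d}.
\end{equation*}

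Next I would apply Titchmarsh's convolution theorem (Lemma \ref{Titchmarsh} from Appendix \ref{sec:convolution}) iteratively to the convolution on the right-hand side. The univariate hypothesis \eqref{uni-detstar-2} says precisely that $\inf \esssupp \Phi_{i,i} = 0$ and that $\Phi_{i,i}$ is not identically zero for each $i$. Titchmarsh gives $\inf \esssupp (f*g) = \inf \esssupp f + \inf \esssupp g$ for nonzero $f,g \in L^1_{\mathrm{loc}}(\R_+,\R)$; applied $d-1$ times (using that at each stage the intermediate convolution is again nonzero with essential infimum $0$, so Titchmarsh can be reapplied) it yields
\begin{equation*}
\inf \esssupp (\Phi_{1,1} * \cdots * \Phi_{d,d}) = 0,
\end{equation*}
which is exactly \eqref{detstar}.

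There is no real obstacle here; the only thing to be careful about is verifying the triangularity argument handles both orientations cleanly, and making sure Titchmarsh is applied only to nonzero functions (which is immediate from $0 \in \esssupp \Phi_{i,i}$). The entire argument is one short paragraph once the reduction to the diagonal convolution is stated.
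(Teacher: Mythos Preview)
Your proposal is correct and follows essentially the same approach as the paper: reduce $\mathrm{det}^*(\Phi)$ to $\Phi_{1,1}*\cdots*\Phi_{d,d}$ by observing that non-identity permutations contribute zero terms for a triangular $\Phi$, then apply Titchmarsh's convolution theorem inductively. Your write-up supplies a bit more detail on the permutation argument and phrases Titchmarsh via the additivity of $\inf\esssupp$, but the substance is identical.
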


\begin{proof}
When $\Phi$ is upper or lower triangular, we find that
\begin{equation*}
\mathrm{det}^*(\Phi) = \Phi_{1,1} * \cdots * \Phi_{d,d},
\end{equation*}
since, in the definition \eqref{def:det}, any summand corresponding to a non-identity permutation $\sigma$ equals zero, as such a summand involves components of $\Phi$ above and below the diagonal. The condition \eqref{detstar} can then be shown to follow from \eqref{uni-detstar-2} using Titchmarsh's convolution theorem (Lemma \ref{Titchmarsh}, below) and induction in $d$.
\end{proof}

\subsubsection{Exponential kernel functions}
In the univariate case, $d=1$ (adopting the notation of Section \ref{frac-Levy}), by setting $\psi = 0$ and
\begin{equation}\label{uni-OU}
\phi(t) \eqdefl e^{at}, \quad t \geqslant 0,
\end{equation}
for some $a<0$, the moving average process $X$ becomes an \emph{Ornstein--Uhlenbeck} (OU) process. It is then clear that $\phi$ satisfies the univariate counterpart \eqref{uni-detstar} of the condition $\eqref{detstar}$.

Multivariate OU processes are defined using the matrix exponential
\begin{equation*}
e^{A} \eqdefl \sum_{j=0}^\infty \frac{1}{j!} A^j,
\end{equation*}
where $A^0 \eqdefl I_d$.
(The matrix exponential $e^A\in \mathbb{M}_d$ is well-defined for any $A \in \mathbb{M}_d$.) More precisely, we define a matrix-valued kernel function $\Phi$ by replacing the parameter $a<0$ in \eqref{uni-OU} with a matrix $A \in \mathbb{M}_d$ whose eigenvalues have strictly negative real parts. Recall that such matrices are called \emph{stable}. We find that such a kernel function $\Phi$ satisfies \eqref{detstar} as well:

\begin{prop}[Exponential kernels]\label{exp-kernel} Suppose that
\begin{equation}\label{exp-Phi}
\Phi(t) = e^{At}, \quad t \geqslant 0,
\end{equation}
for some stable $A \in \mathbb{M}_d$. Then $\Phi \in L^p(\R_+,\mathbb{M}_d)$ for any $p > 0$, and $\Phi$ satisfies \eqref{detstar}.
\end{prop}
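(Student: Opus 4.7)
The plan is to handle the $L^p$-integrability via spectral decay and to verify \eqref{detstar} by passing to the Laplace transform, where $\mathrm{det}^*$ turns into the ordinary determinant of a matrix of rational functions. Stability of $A$ (via, e.g., Jordan normal form) yields constants $C>0$ and $\lambda>0$ such that $\|e^{At}\| \leqslant C e^{-\lambda t}$ for all $t \geqslant 0$; hence $\int_0^\infty \|e^{At}\|^p\,\ud t \leqslant C^p \int_0^\infty e^{-p\lambda t}\,\ud t < \infty$ for every $p>0$, establishing $\Phi\in L^p(\R_+,\mathbb{M}_d)$.

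To check \eqref{detstar}, denote by $\mathcal{L}f(z) \eqdefl \int_0^\infty e^{-zt}f(t)\,\ud t$ the one-sided Laplace transform. For $\mathrm{Re}(z)>0$, stability of $A$ gives absolute convergence of $\mathcal{L}\Phi(z)$ and the standard identity $\mathcal{L}\Phi(z) = (zI_d - A)^{-1}$ (entrywise). Because $\mathcal{L}$ turns convolution into pointwise multiplication, the formula \eqref{def:det} is sent to the ordinary determinant of $\mathcal{L}\Phi(z)$:
\begin{equation*}
\mathcal{L}(\mathrm{det}^*(\Phi))(z) \,=\, \sum_{\sigma\in S_d}\mathrm{sgn}(\sigma)\prod_{i=1}^d \mathcal{L}\Phi_{i,\sigma(i)}(z) \,=\, \det \mathcal{L}\Phi(z) \,=\, \frac{1}{\det(zI_d-A)} \,=\, \frac{1}{p_A(z)},
\end{equation*}
where $p_A$ is the characteristic polynomial of $A$. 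The identity $\mathcal{L}(\mathrm{det}^*(G)) = \det \mathcal{L}(G)$ follows from Fubini together with linearity, and is presumably among the basic properties of $\mathrm{det}^*$ collected in Appendix \ref{sec:convolution}.

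Since $p_A$ is a nonzero polynomial of degree $d$, $1/p_A$ is a nontrivial rational function, and inverting it by partial fractions expresses $\mathrm{det}^*(\Phi)(t)$ as a finite $\R$-linear combination of functions $t\mapsto t^{k}e^{\lambda t}$ with $\lambda$ ranging over the eigenvalues of $A$ and $k\in\{0,1,\ldots,d-1\}$. Thus $\mathrm{det}^*(\Phi)$ extends to an entire function of $t$, and the fact that its Laplace transform is nonvanishing forces $\mathrm{det}^*(\Phi)$ itself to be not identically zero. A nonzero real-analytic function on $\R$ has only isolated zeros, so it cannot vanish almost everywhere on any neighborhood of $0$; in particular $0\in\esssupp \mathrm{det}^*(\Phi)$, which is \eqref{detstar}. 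The only nonroutine ingredient in the argument is the Laplace-transform identity $\mathcal{L}(\mathrm{det}^*(G)) = \det \mathcal{L}(G)$; all remaining steps — exponential decay of $e^{At}$, partial fractions, and the isolation of zeros of a nonzero analytic function — are standard.
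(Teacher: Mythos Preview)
Your proof is correct and shares the paper's core idea: pass to the Laplace transform, use $\mathcal{L}\Phi(z)=(zI_d-A)^{-1}$, and conclude $\mathcal{L}(\mathrm{det}^*(\Phi))(z)=1/p_A(z)$. The difference lies only in the endgame. The paper argues by contradiction on decay rates: if $\mathrm{det}^*(\Phi)$ vanished a.e.\ on some $[0,\varepsilon]$, its Laplace transform would decay exponentially as $s\to\infty$, contradicting the polynomial decay $\sim s^{-d}$ read off from $1/p_A(s)$. You instead invert $1/p_A$ explicitly via partial fractions, obtain that $\mathrm{det}^*(\Phi)$ agrees (by uniqueness of the Laplace transform on $L^1$) with a nonzero real-analytic function, and invoke isolation of zeros. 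Your route is slightly more constructive---it yields an explicit closed form for $\mathrm{det}^*(\Phi)$---while the paper's avoids inversion altogether; both are short and self-contained. One cosmetic point: the coefficients in the partial-fraction expansion may be complex when $A$ has complex eigenvalues, so strictly speaking $\mathrm{det}^*(\Phi)$ is a $\mathbb{C}$-linear (not $\R$-linear) combination of the $t^k e^{\lambda t}$, but since the function itself is real-valued this is harmless for the analyticity argument.
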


\begin{proof}
The assumption that $A$ is stable implies (see \cite[pp.\ 972--973]{Van-Loan-1977}) that there exist constants $c=c(A)>0$ and $\beta=\beta(A)>0$ such that
\begin{equation*}
\|\Phi(t) \| = \|e^{At} \| \leqslant c e^{-\beta t} \quad \textrm{for all $t \geqslant 0$,}
\end{equation*}
which clearly implies that $\Phi \in L^p(\R_+,\mathbb{M}_d)$ for any $p > 0$. 

To show that $\Phi$ satisfies \eqref{detstar}, we consider the Laplace transform $\mathscr{L}[\mathrm{det}^*(\Phi)]$. Note first that since $\Phi \in L^1(\R_+,\mathbb{M}_d)$, each of the components of $\Phi$ belongs to $L^1(\R_+,\R)$, whence $\mathrm{det}^*(\Phi) \in L^1(\R_+,\R)$. Thus, $\mathscr{L}[\mathrm{det}^*(\Phi)](s)$ exists for any $s \geqslant 0$. By the convolution theorem for the Laplace transform, we have
\begin{equation}\label{detstar-det}
\mathscr{L}[\mathrm{det}^*(\Phi)](s) = \det\big(\mathscr{L}[\Phi](s)\big), \quad s \geqslant 0.
\end{equation}
We can now use the well-known fact that the Laplace transform of a matrix-valued function of the form \eqref{exp-Phi} can be expressed using the resolvent of $A$, namely,
\begin{equation}\label{resolvent}
\mathscr{L}[\Phi](s) = (sI_d - A)^{-1}, \quad s \geqslant 0.
\end{equation}
Applying \eqref{resolvent} to \eqref{detstar-det}, we get
\begin{equation*}
\det\big(\mathscr{L}[\Phi](s)\big) = \frac{1}{s^d \det(I_d - s^{-1} A)}, \quad s >0,
\end{equation*}
whence
\begin{equation}\label{detstar-asy}
\mathscr{L}[\mathrm{det}^*(\Phi)](s) \sim \frac{1}{s^d}, \quad s \rightarrow \infty.
\end{equation}

Suppose now that $\Phi$ does not satisfy \eqref{detstar}, which entails that there is $\varepsilon>0$ such that $\mathrm{det}^*(\Phi)(t) = 0$ for almost every $t \in [0,\varepsilon]$. Thus, for any $s \geqslant 0$,
\begin{equation*}
\begin{split}
|\mathscr{L}[\mathrm{det}^*(\Phi)](s)| & \leqslant \int_\varepsilon^\infty e^{-st} |\mathrm{det}^*(\Phi)(t)| \ud t \\
& = e^{-s \varepsilon} \int_0^\infty e^{-su} |\mathrm{det}^*(\Phi)(u+\varepsilon)| \ud u \\
& \leqslant e^{-s \varepsilon} \int_0^\infty |\mathrm{det}^*(\Phi)(u)| \ud u.
\end{split}
\end{equation*}
As $\mathrm{det}^*(\Phi) \in L^1(\R_+,\R)$, we find that its Laplace transform decays exponentially fast, which contradicts \eqref{detstar-asy}.
\end{proof}

\begin{corollary}[L\'evy-driven OU process]
Suppose that $\bs{L}$ satisfies $\E[\|\bs{L}_1\|^2]<\infty$ and $\E[\bs{L}_1]=\bs{0}$. Let $\bs{X}$ be a stationary L\'evy-driven OU process given by
\begin{equation*}
\bs{X}_t \eqdefl \int_{-\infty}^t e^{A(t-s)} \Sigma \ud \bs{L}_s, \quad t \geqslant 0,
\end{equation*}
for some stable $A \in \mathbb{M}_d$ and $\Sigma \in \mathbb{M}_d$ such that $\det(\Sigma) \neq 0$. 
\begin{enumerate}[label=(\roman*),ref=\roman*,leftmargin=2.2em]
\item\label{Levy-OU} If $\mathfrak{S}=0$ and $\Lambda$ satisfies \eqref{Levy-support}, then $(\bs{X}_t)_{t \in [0,T]}$ has the CSBP with respect to $\big(\mathscr{F}^{\bs{L},\mathrm{inc}}_t\big)_{t \in [0,T]}$ for any $T>0$.
\item\label{Gaussian-OU} If $\Lambda=0$ and $\det(\mathfrak{S}) \neq 0$, then $(\bs{X}_t)_{t \in [0,T]}$ has CFS with respect to $\big(\mathscr{F}^{\bs{L},\mathrm{inc}}_t\big)_{t \in [0,T]}$ for any $T>0$
\end{enumerate}
\end{corollary}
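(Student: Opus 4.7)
The plan is to apply Theorem \ref{Levy} for part \eqref{Levy-OU} and Theorem \ref{Gaussian} for part \eqref{Gaussian-OU}, with kernel $\Phi(t) = e^{At}\Sigma$ for $t \geqslant 0$ and $\Psi = 0$. Four hypotheses need checking: the non-degeneracy condition \eqref{detstar}, the integrability conditions \eqref{Int-1}--\eqref{Int-3}, the path regularity of $\bs{X}$ and of each $\bs{A}^{t_0}$, and (in the non-Gaussian case) the finite-variation property of the components of $\Phi$.

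First I would verify \eqref{detstar}: Proposition \ref{exp-kernel} gives \eqref{detstar} for $t \mapsto e^{At}$ together with $e^{A\cdot}\in L^p(\R_+,\M_d)$ for every $p>0$. Right-multiplying by the invertible $\Sigma$ preserves both properties, the latter trivially and the former via the column-multilinearity of $\mathrm{det}^*$ (namely $\mathrm{det}^*(e^{A\cdot}\Sigma) = \det(\Sigma)\,\mathrm{det}^*(e^{A\cdot})$), a direct analogue for right multiplication of the left-multiplication identity appealed to in Remark \ref{rv-example}(iii) and Lemma \ref{lem:convdet}\eqref{it:product}. For the integrability conditions, $\E[\|\bs{L}_1\|^2]<\infty$ implies \eqref{square-kernel}, so by the square-integrable case lemma it suffices to check \eqref{K-int}; since $\Psi=0$ this reduces, after a change of variables, to $\int_0^\infty (\|\Phi(v)\| + \|\Phi(v)\|^2)\,\ud v < \infty$, which is the $L^1\cap L^2$ bound just established.

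Next I would address path regularity. Stability of $A$ yields $\|e^{At}\|, \|A e^{At}\| \leqslant c\,e^{-\beta t}$ for some $c,\beta>0$, whence $Q^\Phi_1(h) = O(h)$ and $Q^\Phi_2(h) = O(h^2)$ as $h \to 0+$. Via the bounds \eqref{continuity-1}--\eqref{continuity-2} and Kolmogorov--Chentsov, $\bs{A}^{t_0}$ admits a continuous modification for every $t_0\geqslant 0$ in both cases, and in case \eqref{Gaussian-OU} so does $\bs{X}$. In case \eqref{Levy-OU}, $\bs{X}$ is the stationary solution of the linear SDE $\ud \bs{X}_t = A\bs{X}_t\,\ud t + \Sigma\,\ud \bs{L}_t$ and therefore inherits c\`adl\`ag paths from $\bs{L}$. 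Moreover, each component of $\Phi$ is smooth with $\int_0^\infty \|\Phi'(t)\|\,\ud t < \infty$, hence of finite variation on $\R_+$. Applying Theorem \ref{Gaussian} in case \eqref{Gaussian-OU} and Theorem \ref{Levy} in case \eqref{Levy-OU} then completes the argument.

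I expect the only nontrivial point to be the transfer of \eqref{detstar} from $e^{At}$ to $e^{At}\Sigma$: the left-multiplication version is spelled out in the paper, but the right-multiplication analogue, though equally elementary, is not stated explicitly and should be verified directly from Definition \ref{convdet-definition} using multilinearity in columns. Everything else is a routine verification built on top of Proposition \ref{exp-kernel} and standard properties of stable matrix exponentials.
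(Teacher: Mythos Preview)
Your proof is correct and follows the same overall strategy as the paper: verify \eqref{detstar} via Proposition~\ref{exp-kernel}, check the regularity hypotheses, and invoke Theorems~\ref{Gaussian} and~\ref{Levy}. Two small remarks.

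First, your worry about transferring \eqref{detstar} under right multiplication is unfounded: Lemma~\ref{lem:convdet}\eqref{it:product} in fact states both identities $\mathrm{det}^*(A\Phi) = \det(A)\,\mathrm{det}^*(\Phi) = \mathrm{det}^*(\Phi A)$, so the right-multiplication case is already available and no separate verification from Definition~\ref{convdet-definition} is needed.

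Second, for path regularity you go through the general $L^2$-moment machinery ($Q^\Phi_1$, $Q^\Phi_2$ and Kolmogorov--Chentsov), whereas the paper exploits the specific algebraic structure of the exponential kernel. Namely, the paper writes
\[
\bs{X}_t - \bs{X}_{t_0} = \underbrace{(e^{A(t-t_0)}-I_d)\bs{X}_{t_0}}_{=\bs{A}^{t_0}_t} + \underbrace{e^{At}\int_{t_0}^t e^{-As}\Sigma\,\ud\bs{L}_s}_{=\bar{\bs{X}}^{t_0}_t},
\]
from which continuity of $\bs{A}^{t_0}$ is immediate (it is a continuous matrix-valued function times a fixed random vector), and the c\`adl\`ag property of $\bar{\bs{X}}^{t_0}$ (hence of $\bs{X}$) follows since it is a continuous function times a c\`adl\`ag martingale. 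This is shorter and more transparent than the moment-bound route, and it also dispenses with the need to invoke the SDE representation separately. Your approach has the virtue of using only the general criteria from Section~3.1, so it would transfer unchanged to kernels without such a multiplicative structure; the paper's approach is tailored to the OU case but gives the regularity essentially for free.
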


\begin{proof}
\eqref{Levy-OU}
The kernel function $\Phi(t) \eqdefl e^{At} \Sigma$, $t \geqslant 0$, is clearly of finite variation and, by Proposition \ref{exp-kernel} and Lemma \ref{lem:convdet}\eqref{it:product}, it satisfies \eqref{detstar}. For any $t_0 \geqslant 0$, we have the decomposition
\begin{equation}\label{AX-decomp}
\bs{X}_t - \bs{X}_{t_0} = \underbrace{(e^{A(t-t_0)}-I_d) \bs{X}_{t_0}}_{= \bs{A}^{t_0}_t} + \underbrace{e^{At} \int_{t_0}^t e^{-As} \Sigma \ud \bs{L}_s}_{= \bar{\bs{X}}^{t_0}_{t}}, \quad t \geqslant t_0.
\end{equation}
Since the map $t \mapsto e^{At}$ is continuous, $\bs{A}^{t_0}$ is a continuous process. Moreover, as a product of a continuous function and a  c\`adl\`ag martingale, $\bar{\bs{X}}^{t_0}$ is c\`adl\`ag, so $\bs{X}$ is c\`adl\`ag as well. The assertion follows then from Theorem \ref{Levy}.

\eqref{Gaussian-OU} Theorem \ref{Gaussian} can be applied here; the continuity of $\bs{X}$ and $\bs{A}^{t_0}$ for any $t_0 \geqslant 0$ can deduced from the decomposition \eqref{AX-decomp}.
\end{proof}

\subsection{L\'evy measures that satisfy \eqref{Levy-support}} 

\subsubsection{Polar decomposition}

When $d \geqslant 2$, the L\'evy measure $\Lambda$ has a polar decomposition (see \cite[Proposition 4.1]{Rosinski-1990} and \cite[Lemma 2.1]{Barndorff-Nielsen-Maejima-Sato-2006}), 
\begin{equation}\label{eq:polar}
\Lambda (A) = \int_{\mathscr{S}^{d-1}} \int_{\R_{++}} \mathbf{1}_A(r\bs{u}) \rho_{\bs{u}}(\ud r) \zeta(\ud \bs{u}),\quad A \in \mathscr{B}(\R^d),
\end{equation}
where $\zeta$ is a finite measure on the unit sphere $\mathscr{S}^{d-1} \subset \R^d$ and $\{ \rho_{\bs{u}} : \bs{u} \in \mathscr{S}^{d-1}\}$ is a family of L\'evy measures on $\R_{++}$ such that the map $\bs{u} \mapsto \rho_{\bs{u}}(A)$ is measurable for any $A \in \mathscr{B}(\R_{++})$. Referring to \eqref{eq:polar}, we say that $\Lambda$ admits a polar decomposition $\{ \zeta, \rho_{\bs{u}} : \bs{u} \in \mathscr{S}^{d-1}\}$.

The condition \eqref{Levy-support} can be established via a polar decomposition as follows:

\begin{prop}[Polar decomposition]\label{polar-supp}
Suppose that $d \geqslant 2$ and that the L\'evy measure $\Lambda$ admits a polar decomposition $\{ \zeta, \rho_{\bs{u}} : \bs{u} \in \mathscr{S}^{d-1}\}$.
If
\begin{enumerate}[label=(\roman*),ref=\roman*,leftmargin=2.2em]
\item\label{item:lambdasupp} $\bs{0} \in \interior \conv \supp \zeta$,
\item\label{item:rhonondeg} $\rho_{\bs{u}}\big( (0,\varepsilon) \big)>0$ for $\zeta$-almost any $\bs{u} \in \mathscr{S}^{d-1}$ and for any $\varepsilon >0$,
\end{enumerate}
then $\Lambda$ satisfies \eqref{Levy-support}.
\end{prop}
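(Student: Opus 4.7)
The plan is to use conditions \eqref{item:lambdasupp} and \eqref{item:rhonondeg} to extract finitely many points in $\supp \Lambda_\varepsilon$ whose convex hull contains $\bs{0}$ in its interior. First I would invoke \eqref{item:lambdasupp} to select $\bs{u}_1,\dots,\bs{u}_{d+1} \in \supp \zeta$ with $\bs{0} \in \interior \conv\{\bs{u}_1,\dots,\bs{u}_{d+1}\}$; this follows from Carath\'eodory's theorem applied to a small simplex around the origin inside $\interior \conv \supp \zeta$. By \eqref{item:rhonondeg}, applied along the countable sequence $\varepsilon = 1/n$ and passing to the union of the resulting null sets, there is a single $\zeta$-null set $N \subset \mathscr{S}^{d-1}$ such that $\rho_{\bs{u}}((0,\delta))>0$ for every $\bs{u} \in \mathscr{S}^{d-1}\setminus N$ and every $\delta>0$. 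Since every non-empty relatively open subset of $\supp \zeta$ carries strictly positive $\zeta$-mass, $\supp \zeta \setminus N$ is dense in $\supp \zeta$, so I can perturb each $\bs{u}_i$ slightly to obtain $\bs{v}_i \in \supp \zeta \setminus N$ with $\bs{0}$ still in the interior of $\conv\{\bs{v}_1,\dots,\bs{v}_{d+1}\}$.

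Next, fixing $\varepsilon>0$, I would produce, for each $i$, a point of $\supp \Lambda_\varepsilon$ lying close to a small positive multiple of $\bs{v}_i$. Given any open spherical neighborhood $V_i$ of $\bs{v}_i$ and any $\delta \in (0,\varepsilon)$, the cone-segment $C(V_i,\delta) \eqdefl \{r\bs{u} : \bs{u} \in V_i,\ r \in (0,\delta)\}$ lies inside $B(\bs{0},\varepsilon)$, so by the polar decomposition \eqref{eq:polar},
\begin{equation*}
\Lambda_\varepsilon(C(V_i,\delta)) \,=\, \int_{V_i} \rho_{\bs{u}}((0,\delta))\,\zeta(\ud \bs{u}) \,>\, 0,
\end{equation*}
because $\zeta(V_i)>0$ (as $\bs{v}_i \in \supp \zeta$) and the integrand is strictly positive on $V_i \setminus N$, which carries all of the $\zeta$-mass of $V_i$. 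Hence $C(V_i,\delta) \cap \supp \Lambda_\varepsilon \ne \varnothing$, and by shrinking $V_i$ and $\delta$ I can extract a point $\bs{p}_i = r_i \bs{u}'_i \in \supp \Lambda_\varepsilon$ with $r_i>0$ arbitrarily small and $\bs{u}'_i \in \mathscr{S}^{d-1}$ arbitrarily close to $\bs{v}_i$.

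To finish I would invoke an elementary convex-geometric fact: the relation $\bs{0} \in \interior \conv\{\bs{w}_1,\dots,\bs{w}_{d+1}\}$, which by the separating-hyperplane theorem is equivalent to $\min_i \langle \bs{w}_i,\bs{n}\rangle < 0$ for every $\bs{n} \in \mathscr{S}^{d-1}$, is preserved both under sufficiently small perturbations of the vertices and under arbitrary positive rescalings $\bs{w}_i \mapsto r_i \bs{w}_i$. Taking the $\bs{u}'_i$ close enough to the $\bs{v}_i$ preserves $\bs{0} \in \interior \conv\{\bs{u}'_1,\dots,\bs{u}'_{d+1}\}$, and the positive rescaling by the $r_i$ then yields $\bs{0} \in \interior \conv\{\bs{p}_1,\dots,\bs{p}_{d+1}\} \subset \interior \conv \supp \Lambda_\varepsilon$, which is \eqref{Levy-support}. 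The main obstacle I anticipate is the interplay between the exceptional $\zeta$-null set in \eqref{item:rhonondeg} and the support of $\zeta$ in the first step; the rest reduces to combining standard facts, but one must verify carefully that the perturbation $\bs{u}_i \rightsquigarrow \bs{v}_i$ can actually be carried out, which in turn hinges on the density of $\supp \zeta \setminus N$ in $\supp \zeta$.
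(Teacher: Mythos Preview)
Your proposal is correct but proceeds differently from the paper. The paper argues by contradiction: if \eqref{Levy-support} fails for some $\varepsilon>0$, the separating/supporting hyperplane theorem yields a closed half-space $E_2$ through $\bs{0}$ containing $\supp \Lambda_\varepsilon$; condition \eqref{item:lambdasupp} then supplies a single $\bs{v} \in \supp \zeta$ in the complementary open half-space, and the truncated cone about $\bs{v}$ of radius $\varepsilon$ has positive $\Lambda_\varepsilon$-measure by precisely the polar-decomposition computation you carry out---a contradiction. Your route is instead constructive, producing finitely many points of $\supp \Lambda_\varepsilon$ whose convex hull contains $\bs{0}$ in its interior. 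Both approaches hinge on the same integral $\int_U \rho_{\bs{u}}((0,\delta))\,\zeta(\ud\bs{u})>0$, but the paper's contradiction bypasses the convex-geometric bookkeeping (selection of finitely many directions, stability of $\bs{0}\in\interior\conv\{\cdot\}$ under perturbation and positive rescaling) that your direct argument requires. Two minor remarks on your version: the selection of finitely many $\bs{u}_i$ in your first step is Steinitz's theorem rather than Carath\'eodory's (and in general requires up to $2d$ points rather than $d+1$, though this is immaterial to the rest of the argument); and your perturbation step $\bs{u}_i \rightsquigarrow \bs{v}_i \in \supp\zeta\setminus N$ is in fact unnecessary, since the integral over $V_i$ only needs $\zeta(V_i)>0$ together with $\rho_{\bs{u}}((0,\delta))>0$ for $\zeta$-a.e.\ $\bs{u}$, both of which follow directly from \eqref{item:lambdasupp} and \eqref{item:rhonondeg} without first avoiding the null set.
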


\begin{proof}
Suppose that $\Lambda$ does not satisfy \eqref{Levy-support}. Then there exists $\varepsilon>0$ such that either $\bs{0} \in \partial \conv \supp \Lambda_\varepsilon$ or $\bs{0} \notin \conv \supp \Lambda_\varepsilon$. Invoking the supporting hyperplane theorem in the former case and the separating hyperplane theorem in the latter case, we can find a hyperplane $P$, passing through $\bs{0}$, that divides $\R^d$ into two closed half-spaces $E_1$ and $E_2$ such that $E_1 \cap E_2 = P$ and that $\supp \Lambda_\varepsilon \subset E_2$, say. (See Figure \ref{polar-dec-fig} for an illustration.) By property \eqref{item:lambdasupp}, we can find $\bs{v} \in \supp \zeta$ such that $\bs{v} \in \interior E_1$. Moreover, we can find an open neighbourhood $U \subset \mathscr{S}^{d-1}$ of $\bs{v}$ such that $U \subset \interior E_1$ and $\zeta(U)>0$. 
Consider now the truncated cone
\begin{equation*}
C \eqdefl \{ r\bs{u} : r \in (0,\varepsilon),\, \bs{u} \in U \} \subset B(\bs{0},\varepsilon),
\end{equation*}
Clearly, $\mathbf{1}_C (r \bs{u}) = \mathbf{1}_{(0,\varepsilon)}(r) \mathbf{1}_U(\bs{u})$ for any $r>0$ and $\bs{u} \in \mathscr{S}^{d-1}$. Thus, by the polar decomposition, we have that
\begin{equation*}
\begin{split}
\Lambda_\varepsilon(C) = \Lambda(C) & = \int_{\mathscr{S}^{d-1}} \int_{(0,\infty)} \mathbf{1}_{(0,\varepsilon)}(r) \mathbf{1}_U(\bs{u}) \rho_{\bs{u}}(\ud r) \zeta(\ud \bs{u}) \\
& = \int_U \rho_{\bs{u}}\big((0,\varepsilon)\big) \zeta(\ud \bs{u}) >0,
\end{split}
\end{equation*}
where the final inequality follows from property \eqref{item:rhonondeg}. As the set $C$ has positive $\Lambda_\varepsilon$-measure, it must intersect $\supp \Lambda_\varepsilon$, which is a contradiction since $C \subset \interior E_1$ and $\supp \Lambda_\varepsilon \subset E_2$, while $E_2 \cap \, \interior E_1 = \varnothing$.
\end{proof}

\begin{figure}[tbp]
\centering
\includegraphics[scale=0.8]{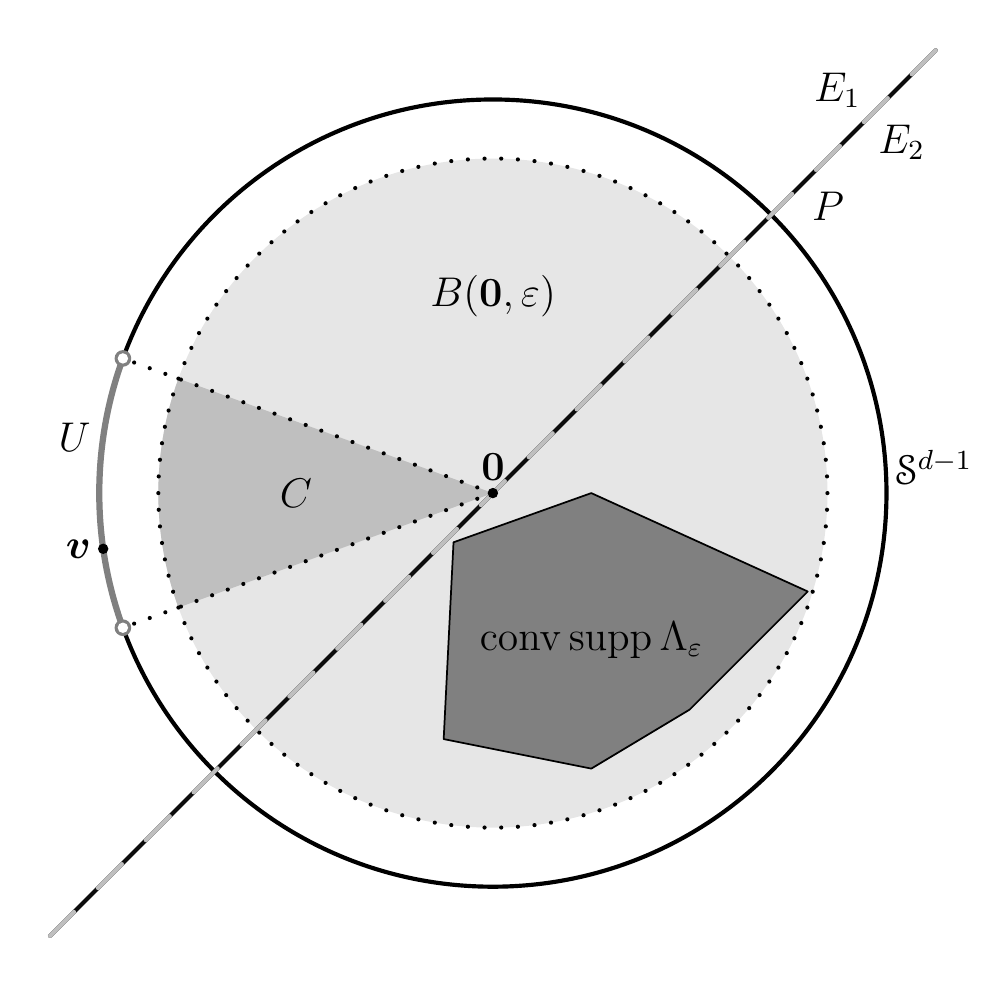}
\caption{The sets defined in the proof of Proposition \ref{polar-supp}.\label{polar-dec-fig}}
\end{figure}

\begin{example}\label{independent-Levy}
Suppose that $\bs{L}$ is a pure jump L\'evy process with mutually independent, non-degenerate components $(L^i_t)_{t \in \R}$, $i = 1,\ldots,d$. Then the L\'evy measure $\Lambda$ of $\bs{L}$ is concentrated on the coordinate axes of $\R^d$; see, e.g., \cite[Exercise 12.10, p.\ 67]{Sato-1999}.
 More concretely, $\Lambda$ admits then a polar decomposition $\{ \zeta, \rho_{\bs{u}} : \bs{u} \in \mathscr{S}^{d-1}\}$, with
\begin{equation*}
\zeta = \sum_{i=1}^d (\delta_{\bs{e}_i}+\delta_{-\bs{e}_i}),
\end{equation*}
where $\delta_{\bs{x}}$ denotes the Dirac measure at $\bs{x} \in \R^d$ and $\bs{e}_i$ is the $i$-th canonical basis vector of $\R^d$, and for any $i = 1,\ldots,d$ and $A \in \mathscr{B}(\R_{++})$,
\begin{equation*}
\rho_{\bs{e}_i}(A) = \lambda_i(A) \quad \textrm{and} \quad \rho_{-\bs{e}_i}(A) = \lambda_i(-A),
\end{equation*}
where $\lambda_i$ denotes the the L\'evy measure of the component $(L^i_t)_{t \in \R}$.

Clearly, $\zeta$ satisfies the condition \eqref{item:lambdasupp} of Proposition \ref{polar-supp} and \eqref{item:rhonondeg} holds provided that the  L\'evy measure $\lambda_i$, for any $i = 1,\ldots,d$, satisfies the condition \eqref{Levy-support-1}, namely,
$\lambda_i\big((-\varepsilon,0)\big)>0$ and $\lambda_i\big((0,\varepsilon)\big)>0$ for any $\varepsilon>0$. 
\end{example}

\begin{example}
It is well known that the L\'evy process $\bs{L}$ is \emph{self-decomposable} (see \cite[Section 3.15]{Sato-1999} for the usual definition) if and only if its L\'evy measure $\Lambda$ admits a polar decomposition $\{ \zeta, \rho_{\bs{u}} : \bs{u} \in \mathscr{S}^{d-1}\}$, where
\begin{equation*}
\rho_{\bs{u}}(\ud r) = \frac{\mathscr{K}({\bs{u}},r)}{r} \ud r, \quad \textrm{$\zeta$-almost every $\bs{u} \in \mathscr{S}^{d-1}$,}
\end{equation*}
for some function $\mathscr{K} : \mathscr{S}^{d-1} \times \R_{++} \rightarrow \R_+$ such that $\mathscr{K}({\bs{u}},r)$ is measurable in $\bs{u}$ and decreasing in $r$ \cite[Theorem 15.10, p.\ 95]{Sato-1999}. For example, for $\alpha \in (0,2)$, setting $\mathscr{K}({\bs{u}},r) \eqdefl r^{-\alpha}$ for all $\bs{u} \in \mathscr{S}^{d-1}$ and $r>0$, we recover the subclass of \emph{$\alpha$-stable processes} \cite[Theorem 14.3, pp.\ 77--78]{Sato-1999}. Clearly, the condition \eqref{item:rhonondeg} in Proposition \ref{polar-supp} is then met if $\sup_{r >0}\mathscr{K}({\bs{u}},r)>0$ for $\zeta$-almost every $\bs{u} \in \mathscr{S}^{d-1}$.

\end{example}

\subsubsection{Multivariate subordination} 
Subordination is a classical method of constructing a new L\'evy process by time-changing an existing L\'evy process with an independent \emph{subordinator}, a L\'evy process with non-decreasing trajectories; see, e.g., \cite[Chapter 6]{Sato-1999}. Barndorff-Nielsen et al.\ \cite{Barndorff-Nielsen-Pedersen-Sato-2001} have introduced a multivariate extension of this technique, which, in particular, provides a way of constructing multivariate L\'evy processes with dependent components (starting from L\'evy processes whose components are mutually independent). 

We discuss now, how the condition \eqref{Levy-support} can be checked when $\Lambda$ is the L\'evy measure of a L\'evy process $\bs{L}$ that has been constructed by multivariate subordination. For simplicity, we consider only a special case of the rather general framework introduced in \cite{Barndorff-Nielsen-Pedersen-Sato-2001}. As the key ingredient in multivariate subordination we take a family of $d\geqslant 2$ mutually independent, univariate L\'evy processes $\big(\widetilde{L}^i_t\big)_{t \geqslant 0}$, $i = 1,\ldots,d$, with respective triplets $\big(\tilde{b}_i,0,\tilde{\lambda}_i\big)$, $i = 1,\ldots,d$. (We eschew Gaussian parts here, as they would be irrelevant in the context of Theorem \ref{Levy}.)
Additionally, we need a $d$-dimensional subordinator $(\bs{T}_t)_{t \geqslant 0}$, independent of $\big(\widetilde{L}^i_t\big)_{t \geqslant 0}$, $i = 1,\ldots,d$, which is a L\'evy process whose triplet $(\bs{c},0,\rho)$ satisfies $\bs{c} = (c_1,\ldots,c_d) \in \R^d_+$ and $\supp \rho \subset \R^d_+$. We denote the components of $\bs{T}_t$ by $T^1_t,\ldots,T^d_t$ for any $t \geqslant 0$.

By \cite[Theorem 3.3]{Barndorff-Nielsen-Pedersen-Sato-2001}, the time-changed process
\begin{equation}\label{subordination}
\bs{L}_t \eqdefl \Big(\widetilde{L}^1_{T^1_t},\ldots,\widetilde{L}^d_{T^d_t}\Big),\quad t \geqslant 0,
\end{equation}
is a L\'evy process with triplet $(\bs{b},0, \Lambda)$ given by
\begin{align}
\bs{b} & \eqdefl \int_{\R_+^d} \rho(\ud \bs{s}) \int_{\{\|\bs{x}\| \leqslant 1 \}} \bs{x} \mu_{\bs{s}}(\ud \bs{x}) + (c_1 \tilde{b}_1,\ldots,c_d \tilde{b}_d),\notag \\
\Lambda(B) &\eqdefl \int_{\R_+^d} \mu_{\bs{s}}(B) \rho(\ud \bs{s}) + \int_B \big(c_1 \mathbf{1}_{\mathscr{A}_1}(\bs{x})\tilde{\lambda}_1(\ud x_1)+\cdots +c_d \mathbf{1}_{\mathscr{A}_d}(\bs{x})\tilde{\lambda}_d(\ud x_d)\big),\, \, B \in \mathscr{B}(\R^d),\label{eq:subtriplet}
\end{align}
where
\begin{equation*}
\mu_{\bs{s}}(\ud \bs{x}) \eqdefl \prob\Big[\widetilde{L}^1_{s_1} \in \ud x_1,\ldots,\widetilde{L}^d_{s_d} \in \ud x_d\Big] = \prod_{j=1}^d \prob\Big[\widetilde{L}^j_{s_j} \in \ud x_j\Big], \quad \bs{s} \in \R_+^d,
\end{equation*}
and $\mathscr{A}_i \eqdefl \{ \bs{x} \in \R^d : x_j = 0,\, j \neq i\}$ is the $i$-th coordinate axis for any $i = 1,\ldots,d$. (While \eqref{subordination} defines only a one-sided process $\bs{L}$, it can obviously be extended to a two-sided L\'evy process using the construction \eqref{two-sided}.) 

\begin{prop}[Multivariate subordination]
Suppose that the L\'evy measure $\Lambda$ is given by \eqref{eq:subtriplet} and that
\begin{equation}\label{marginal-supp}
\tilde{\lambda}_i\big((-\varepsilon,0)\big)>0 \quad \textrm{and} \quad \tilde{\lambda}_i\big((0,\varepsilon)\big)>0 \quad \textrm{for any $i = 1,\ldots,d$ and $\varepsilon>0$.}
\end{equation}
If $\bs{c} \in \R^d_{++}$ or $\R^d_{++} \cap \supp \rho = \varnothing$, then $\Lambda$ satisfies \eqref{Levy-support}.
\end{prop}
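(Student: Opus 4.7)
The plan is to verify the condition \eqref{Levy-support} by producing, for every $\varepsilon>0$ and every $i=1,\ldots,d$, two points of $\supp \Lambda_\varepsilon$ of the form $r_i^+\bs{e}_i$ and $r_i^-\bs{e}_i$ with $r_i^+\in(0,\varepsilon)$ and $r_i^-\in(-\varepsilon,0)$. Once these $2d$ signed-axis points are located, their convex hull is a (possibly non-regular) cross-polytope containing $\bs{0}$ in its interior, which is exactly the assertion $\bs{0}\in\interior\conv\supp\Lambda_\varepsilon$. So the task reduces to a direct inspection of the decomposition \eqref{eq:subtriplet} of $\Lambda$ on each signed coordinate half-axis.

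Consider first the case $\bs{c}\in\R^d_{++}$. From \eqref{eq:subtriplet} one has the lower bound
\begin{equation*}
\Lambda(B)\;\geqslant\;\sum_{i=1}^d c_i\,\tilde\lambda_i\bigl(\{x\in\R : x\bs{e}_i\in B\}\bigr)\quad\text{for any $B\in\mathscr{B}(\R^d)$.}
\end{equation*}
Since $c_i>0$ and, by \eqref{marginal-supp}, $\tilde\lambda_i\bigl((-\varepsilon,0)\bigr)>0$ and $\tilde\lambda_i\bigl((0,\varepsilon)\bigr)>0$, the restriction $\Lambda_\varepsilon$ charges every neighbourhood of $r\bs{e}_i$ for some $r\in(0,\varepsilon)$, and likewise for some $r\in(-\varepsilon,0)$, for each $i=1,\ldots,d$. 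Reading off one such point per sign and per index furnishes the required $2d$ axis points and concludes this case.

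The second case $\R^d_{++}\cap\supp\rho=\varnothing$ means that $\rho$-almost every $\bs{s}$ has some component $s_j=0$, which forces $\widetilde L^j_{s_j}=0$ a.s.\ and pins the corresponding coordinate of the $\mu_{\bs{s}}$-jump to zero. Hence the integral term in \eqref{eq:subtriplet} is concentrated on $\bigcup_j\{x_j=0\}$. The strategy is then to fix $i$ and extract from $\supp\rho$ points $\bs{s}$ with $s_i$ arbitrarily small and strictly positive (while $s_j=0$ for some $j\neq i$); for such $\bs{s}$ the marginal $\mu_{\bs{s}}$ in the $i$-th coordinate is the law of $\widetilde L^i_{s_i}$, whose support contains values on both sides of $0$ arbitrarily close to $0$ by \eqref{marginal-supp}. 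Combined with the axis-concentrated second sum in \eqref{eq:subtriplet} (which takes over whenever $c_i>0$), this again places $\Lambda_\varepsilon$-mass near $\pm r\bs{e}_i$ with $r\in(0,\varepsilon)$ for every $i$.

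The main obstacle will be the bookkeeping in the second case: one must argue that \emph{every} axis direction $\pm\bs{e}_i$ is actually reached, even when both $c_i=0$ and the constraint $\R^d_{++}\cap\supp\rho=\varnothing$ restricts the admissible $\bs{s}$. The cleanest route is probably to invoke Proposition \ref{polar-supp} after decomposing $\Lambda$ polarly, verifying hypothesis \eqref{item:lambdasupp} from the axis-support structure derived above and hypothesis \eqref{item:rhonondeg} from \eqref{marginal-supp} together with the non-degeneracy of $\widetilde L^i_{s_i}$ for small $s_i>0$; everything else is routine.
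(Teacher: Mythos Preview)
Your first case ($\bs{c}\in\R^d_{++}$) is correct and essentially matches the paper: both arguments exploit the axis-concentrated second sum in \eqref{eq:subtriplet} to place $\Lambda_\varepsilon$-mass in enough directions (you locate points on every signed half-axis, the paper places mass in every open orthant; either conclusion gives $\bs{0}\in\interior\conv\supp\Lambda_\varepsilon$).

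The second case, however, rests on a typo in the statement. The printed hypothesis $\R^d_{++}\cap\supp\rho=\varnothing$ should read $\R^d_{++}\cap\supp\rho\neq\varnothing$; this is clear from the paper's own proof, whose key inequality $\int_{\R^d_{++}}\mu_{\bs{s}}(B)\,\rho(\ud\bs{s})>0$ requires $\rho(\R^d_{++})>0$. Under the corrected hypothesis the paper's argument is short: by Corollary~\ref{small-ball-property} each $\widetilde L^i_{s_i}$ has full support on $\R$ when $s_i>0$, so $\supp\mu_{\bs{s}}=\R^d$ for every $\bs{s}\in\R^d_{++}$; hence for any small box $B$ in any open orthant one has $\mu_{\bs{s}}(B)>0$, and the first integral in \eqref{eq:subtriplet} gives $\Lambda_\varepsilon(B)>0$.

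You instead took ``$=\varnothing$'' literally and tried to push mass onto each axis via $\mu_{\bs{s}}$ with some $s_j=0$. This cannot be completed, because the statement is actually \emph{false} in that form. Concretely, with $d=2$, $\bs{c}=\bs{0}$ and $\rho=\delta_{(0,1)}$ one has $\supp\rho\cap\R^2_{++}=\varnothing$, yet \eqref{eq:subtriplet} reduces to $\Lambda=\mu_{(0,1)}=\mathrm{Law}\big(0,\widetilde L^2_1\big)$, which is supported on the $x_2$-axis; then $\conv\supp\Lambda_\varepsilon$ has empty interior and \eqref{Levy-support} fails. This is precisely the obstruction you flagged as ``the main obstacle'', and no amount of bookkeeping or appeal to Proposition~\ref{polar-supp} can remove it.
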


\begin{proof}
Let $\bs{i} = (i_1,\ldots,i_d) \in \{0,1\}^d$, $\varepsilon>0$, and $B = B_1\times\cdots \times B_d \subset B(\bs{0},\varepsilon)$, where
\begin{equation*}
B_j \eqdefl \begin{cases}
\big(0,\frac{\varepsilon}{\sqrt{d}}\big), & i_j = 1,\\
\big(-\frac{\varepsilon}{\sqrt{d}},0\big), & i_j = 0,\\
\end{cases}
\end{equation*}
for any $j = 1,\ldots,d$. We observe that
\begin{equation}\label{so-1}
\Lambda_\varepsilon(B)=\Lambda(B) \geqslant \int_B \big(c_1 \mathbf{1}_{\mathscr{A}_1}(\bs{x})\tilde{\lambda}_1(\ud x_1)+\cdots +c_d \mathbf{1}_{\mathscr{A}_d}(\bs{x})\tilde{\lambda}_d(\ud x_d)\big) = \sum_{i=1}^d c_i \tilde{\lambda}(B_i)>0,
\end{equation}
when $\bs{c} \in \R^d_{++}$. The assumption \eqref{marginal-supp} implies, by Lemma \ref{small-ball-property} below, that $\supp \mu_{\bs{s}} = \R^d$, and consequently $\mu_{\bs{s}}(B)>0$, for any $\bs{s} \in \R^d_{++}$. Thus,
\begin{equation}\label{so-2}
\Lambda_\varepsilon(B)=\Lambda(B) \geqslant \int_{\R_{++}^d} \mu_{\bs{s}}(B) \rho(\ud \bs{s}) >0, 
\end{equation}
when $\R^d_{++} \cap \supp \rho = \varnothing$. It follows from \eqref{so-1} and \eqref{so-2}, that $\supp \Lambda_{\varepsilon}$ intersects with any open orthant of $\R^d$, which in turn implies that $\Lambda$ satisfies \eqref{Levy-support}.
\end{proof}

\subsubsection{L\'evy copulas}

Let $d \geqslant 2$. The L\'evy measure $\Lambda$ on $\R^d$ can be defined by specifying $d$ one-dimensional marginal L\'evy measures and describing the dependence structure through a \emph{L\'evy copula}, a concept introduced by Kallsen and Tankov \cite{Kallsen-Tankov-2006}. We provide first a very brief introduction to L\'evy copulas, following \cite{Kallsen-Tankov-2006}.

Below, $\overline{\R} \eqdefl (-\infty,\infty]$. Moreover, for $\bs{a}=(a_1,\ldots,a_d),\, \bs{b}=(b_1,\ldots,b_d) \in \overline{\R}^d$, we write $\bs{a} \leqslant \bs{b}$ (resp.\ $\bs{a} < \bs{b}$) if $a_i \leqslant b_i$ (resp.\ $a_i < b_i$) for any $i = 1,\ldots,d$. For any $\bs{a} \leqslant \bs{b}$ we denote
by $(\bs{a},\bs{b}]$ the hyperrectangle $\prod_{i=1}^d (a_i,b_i]$. 

\begin{definition}
Let $F : \overline{\R}^d \rightarrow \overline{\R}^d$ and let $\bs{a},\, \bs{b} \in \overline{\R}^d$ be such that $\bs{a} \leqslant \bs{b}$. The \emph{rectangular increment} of $F$ over the hyperrectangle $(\bs{a},\bs{b}] \subset \overline{\R}^d$ is defined by
\begin{equation*}
F\big( (\bs{a},\bs{b}]\big) \eqdefl \sum_{\bs{i} \in \{0,1\}^d} (-1)^{d- \sum_{j=1}^d i_j} F\big((1-i_1)a_1 + i_1 b_1,\ldots,(1-i_d)a_d + i_d b_d\big).
\end{equation*}
We say that $F$ is \emph{$d$-increasing} if $F\big( (\bs{a},\bs{b}]\big) \geqslant 0$ for any $\bs{a},\, \bs{b} \in \overline{\R}^d$ such that $\bs{a} \leqslant \bs{b}$. Further, we say that $F$ is \emph{strictly $d$-increasing} if $F\big( (\bs{a},\bs{b}]\big) > 0$ for any $\bs{a},\, \bs{b} \in \overline{\R}^d$ such that $\bs{a} < \bs{b}$.
\end{definition}

\begin{definition}
Let $F : \overline{\R}^d \rightarrow \overline{\R}^d$ be $d$-increasing, with $F(x_1,\ldots,x_d) = 0$ whenever $x_i=0$ for some $i=1,\ldots,d$. For any non-empty $I \subset \{1,\ldots,d\}$, the \emph{$I$-margin} of $F$ is a function $F^I : \overline{\R}^{|I|}\rightarrow \overline{\R}$ given by
\begin{equation*}
F^I\big((x_i)_{i \in I}\big) \eqdefl \lim_{u \rightarrow -\infty} \sum_{(x_i)_{i \in I^c} \in \{u,\infty\}^{|I^c|}} F(x_1,\ldots,x_d) \prod_{i \in I^c} \mathrm{sgn}(x_i),
\end{equation*}
where $\mathrm{sgn}(x) \eqdefl -\mathbf{1}_{(-\infty,0)}(x) + \mathbf{1}_{[0,\infty)}(x)$, $x \in\R$, and $I^c$ stands for the complement $\{1,\ldots,d\} \setminus I$.
\end{definition}

\begin{definition}
A $d$-dimensional \emph{L\'evy copula} is a function $C : \overline{\R}^d \rightarrow \overline{\R}$ such that
\begin{enumerate}[label=(\roman*),ref=\roman*,leftmargin=2.2em]
\item $C(x_1\ldots,x_d) < \infty$ when $x_i < \infty$ for all $i = 1\ldots,d$,
\item\label{lc2} $C(x_1\ldots,x_d) = 0$ when $x_i = 0$ for some $i = 1\ldots,d$,
\item\label{lc3} $C$ is $d$-increasing,
\item\label{lc4} $C^{\{ i\}}(x) = x$ for all $x \in \R$ and $i = 1\ldots,d$.
\end{enumerate}
\end{definition}

\begin{remark} 
The properties \eqref{lc2}, \eqref{lc3}, and \eqref{lc4} of a L\'evy copula are analogous to the defining properties of a classical copula; see, e.g., \cite[Definition 2.10.6]{Nelsen-2006}.
\end{remark}

Recall that a classical copula describes a probability measure on $\R^d$ through its cumulative distribution function, by Sklar's theorem \cite[Theorem 2.10.9]{Nelsen-2006}. In the context of L\'evy copulas, the cumulative distribution function is replaced with the tail integral of the L\'evy measure, which is defined as follows.

\begin{definition}\label{tail-int}
The \emph{tail integral} of the L\'evy measure $\Lambda$ is a function $J_\Lambda : (\R \setminus \{ 0 \})^d \rightarrow \R$ given by
\begin{equation*}
J_\Lambda(x_1,\ldots,x_d) \eqdefl \Lambda\bigg(\prod_{j=1}^d \mathscr{I}(x_j) \bigg) \prod_{i=1}^d \mathrm{sgn}(x_i),
\end{equation*}
where
\begin{equation*}
\mathscr{I}(x) \eqdefl \begin{cases}
(x,\infty), & x \geqslant 0,\\
(-\infty,x], & x < 0.
\end{cases}
\end{equation*}
\end{definition}

\begin{definition}
Let $I \subset \{1,\ldots,d\}$ be non-empty. The \emph{$I$-marginal tail integral} of the L\'evy measure $\Lambda$, denoted by $J^I_\Lambda$, is the tail integral of the L\'evy measure $\Lambda^I$ of the process $\bs{L}^I \eqdefl (L^i)_{i \in I}$, which consists of components of the L\'evy process $\bs{L}$ with L\'evy measure $\Lambda$.
\end{definition}

Kallsen and Tankov \cite[Lemma 3.5]{Kallsen-Tankov-2006} have shown that the L\'evy measure $\Lambda$ is fully determined by its marginal tail integrals $J^I_\Lambda$, $I \subset \{1,\ldots,d\}$ is non-empty. Moreover, they have proved a Sklar's theorem for L\'evy copulas \cite[Theorem 3.6]{Kallsen-Tankov-2006}, which says that for any L\'evy measure $\Lambda$ on $\R^d$, there exists a L\'evy copula $C$ that satisfies
\begin{equation*}
J^I_\Lambda\big((x_i)_{i \in I}\big) = C^I\Big( \big(J_{\Lambda}^{\{ i \}}(x_i)\big)_{i \in I} \Big)
\end{equation*}
for any non-empty $I \subset \{1,\ldots,d\}$ and any $(x_i)_{i \in I} \in (\R \setminus \{ 0\})^{|I|}$. Conversely, we can construct $\Lambda$ from a L\'evy copula $C$ and (one-dimensional) marginal L\'evy measures $\lambda_i$, $i=1,\ldots,d$, by defining 
\begin{equation*}
J^I_\Lambda\big((x_i)_{i \in I}\big) \eqdefl C^I\Big( \big(J_{\lambda_i}(x_i)\big)_{i \in I} \Big)
\end{equation*}
for any non-empty $I \subset \{1,\ldots,d\}$ and any $(x_i)_{i \in I} \in (\R \setminus \{ 0\})^{|I|}$. Then we say that $\Lambda$ has L\'evy copula $C$ and marginal L\'evy measures $\lambda_i$, $i=1,\ldots,d$.

We can now show that $\Lambda$ satisfies \eqref{Levy-support} if it has strictly $d$-increasing L\'evy copula and marginal L\'evy measures such that each of them satisfies \eqref{Levy-support-1}.  

\begin{prop}[L\'evy copula]\label{prop:copula-support}
Suppose that the L\'evy measure $\Lambda$ has L\'evy copula $C$ and marginal L\'evy measures $\lambda_i$, $i=1,\ldots,d$.
If
\begin{enumerate}[label=(\roman*),ref=\roman*,leftmargin=2.2em]
\item\label{item:levydelta} $C$ is strictly $d$-increasing, 
\item\label{item:convsupport} $\lambda_i\big( (-\varepsilon,0)\big) > 0$ and $\lambda_i\big( (0,\varepsilon)\big) > 0$ for any $i = 1,\ldots,d$ and $\varepsilon >0$,
\end{enumerate}
then $\Lambda$ satisfies \eqref{Levy-support}.
\end{prop}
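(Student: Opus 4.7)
The plan is to show that, for every $\varepsilon > 0$, the restricted L\'evy measure $\Lambda_\varepsilon$ puts positive mass in each of the $2^d$ open orthants
\[
O_{\bs{s}} \eqdefl \{\bs{x} \in \R^d : s_j x_j > 0 \text{ for all } j = 1,\ldots,d\},\quad \bs{s} \in \{-1,+1\}^d,
\]
and then to deduce $\bs{0} \in \interior \conv \supp \Lambda_\varepsilon$ by an elementary convex-geometric argument.

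First, I would use hypothesis \eqref{item:convsupport} together with countable additivity to pick, for each $j=1,\ldots,d$ and each $\epsilon \in \{-1,+1\}$, a half-open interval $I_j^{\epsilon}$ with $\lambda_j(I_j^{\epsilon}) > 0$, where $I_j^{+1} \subset (0,\delta)$ and $I_j^{-1} \subset (-\delta, 0)$ with $\delta \eqdefl \varepsilon/\sqrt{d}$. For each sign vector $\bs{s} \in \{-1,+1\}^d$, the hyperrectangle $(\bs{a},\bs{b}] \eqdefl I_1^{s_1}\times\cdots\times I_d^{s_d}$ then lies inside $O_{\bs{s}} \cap B(\bs{0},\varepsilon)$.

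Next, I would translate $\Lambda((\bs{a},\bs{b}])$ into a rectangular increment of the L\'evy copula $C$. Because $(\bs{a},\bs{b}]$ sits in a single orthant, a direct inclusion--exclusion based on Definition \ref{tail-int} rewrites $\Lambda((\bs{a},\bs{b}])$ as a signed sum of values $\Lambda(\prod_j \mathscr{I}(x_j))$ over the corners $\bs{x}$ of the box; after absorbing the $\mathrm{sgn}(x_j)$ factors, this signed sum is a rectangular increment of the tail integral $J_\Lambda$. Sklar's theorem \cite[Theorem 3.6]{Kallsen-Tankov-2006} then identifies it with $C((\bs{u},\bs{v}])$, where $u_j, v_j$ are the values of $J_{\lambda_j}$ at the endpoints of $I_j^{s_j}$ in the appropriate order. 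Since $J_{\lambda_j}$ is monotone and $|v_j - u_j| = \lambda_j(I_j^{s_j}) > 0$, one has $\bs{u} < \bs{v}$ coordinatewise, and hypothesis \eqref{item:levydelta} yields
\[
\Lambda_\varepsilon(O_{\bs{s}} \cap B(\bs{0},\varepsilon)) \,\geqslant\, \Lambda((\bs{a},\bs{b}]) \,=\, C((\bs{u},\bs{v}]) \,>\, 0.
\]
As $O_{\bs{s}} \cap B(\bs{0},\varepsilon)$ is open, this forces some $\bs{p}_{\bs{s}} \in \supp \Lambda_\varepsilon$ to lie in $O_{\bs{s}}$.

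Finally, I would invoke the elementary fact that any subset of $\R^d$ containing at least one point in every open orthant has $\bs{0}$ in the interior of its convex hull: if a separating hyperplane $\{\bs{x} : \langle \bs{u},\bs{x}\rangle = 0\}$ with $\bs{u}\neq \bs{0}$ existed, choosing $\bs{s}$ with $s_j = -\mathrm{sgn}(u_j)$ whenever $u_j \neq 0$ (and arbitrarily otherwise) would produce $\langle \bs{u}, \bs{p}_{\bs{s}}\rangle < 0$, contradicting the separation. Applying this to $\{\bs{p}_{\bs{s}}\}_{\bs{s} \in \{-1,+1\}^d} \subset \supp \Lambda_\varepsilon$ completes the verification of \eqref{Levy-support}. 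The main obstacle is the sign-bookkeeping in the middle step, converting the $\Lambda$-measure of a box in a general orthant into a clean rectangular increment of $C$; once those signs cancel correctly, the strict $d$-increasing property of $C$ does the essential work and the rest of the argument is elementary.
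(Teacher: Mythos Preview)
Your proposal is correct and follows essentially the same approach as the paper: both arguments show that $\Lambda_\varepsilon$ charges every open orthant by converting the $\Lambda$-measure of a small box in a fixed orthant into a rectangular increment $C((\tilde{\bs{b}},\tilde{\bs{a}}])$ of the L\'evy copula with $\tilde{\bs{b}}<\tilde{\bs{a}}$, and then invoke the strict $d$-increasing property. The only cosmetic differences are that the paper chooses its boxes via an explicit parametric family (letting $n\to\infty$) rather than your appeal to countable additivity, and that it carries out the sign bookkeeping you flag as ``the main obstacle'' in detail, arriving at $\Lambda((\bs{a},\bs{b}]) = (-1)^d J_\Lambda((\bs{a},\bs{b}]) = C((\tilde{\bs{b}},\tilde{\bs{a}}])$.
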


\begin{remark}
Proposition \ref{prop:copula-support} has the caveat that its assumptions do not cover the case where $\Lambda$ is the L\'evy measure of a L\'evy process with independent components, discussed in Example \ref{independent-Levy}. Indeed, the corresponding ``independence L\'evy copula'', characterized in \cite[Proposition 4.1]{Kallsen-Tankov-2006}, is not strictly $d$-increasing. However, it is worth pointing out that it is not sufficient in Proposition \ref{prop:copula-support} that $C$ is merely $d$-increasing, as this requirement is met by all L\'evy copulas, including those that give rise to L\'evy processes with perfectly dependent components; see, e.g., \cite[Theorem 4.4]{Kallsen-Tankov-2006}. 
\end{remark}

\begin{proof}[Proof of Proposition \ref{prop:copula-support}]
Let $\bs{a},\, \bs{b} \in \overline{\R}^d$ be such that $\bs{a} \leqslant \bs{b}$ and $a_ib_i>0$ for any $i = 1,\ldots,d$. Then
\begin{equation}\label{eq:J-incr}
J_\Lambda ( (\bs{a},\bs{b}] )  = \sum_{\bs{i} \in \{0,1\}^d} (-1)^{d- \sum_{j=1}^d i_j} J_\Lambda\big((1-i_1)a_1 + i_1 b_1,\ldots,(1-i_d)a_d + i_d b_d\big),
\end{equation}
where
\begin{multline}\label{eq:J-conv}
J_\Lambda\big((1-i_1)a_1 + i_1 b_1,\ldots,(1-i_d)a_d + i_d b_d\big) \\
\begin{aligned}
 & = C\big( J_{\lambda_1}\big( (1-i_1)a_1 + i_1 b_1\big),\ldots,J_{\lambda_d}\big( (1-i_d)a_d + i_d b_d\big) \big) \\
 & = C\big( i_1 J_{\lambda_1}(b_1)+(1-i_1) J_{\lambda_1}(a_1) ,\ldots,i_d J_{\lambda_d}(b_d)+(1-i_d) J_{\lambda_d}(a_d)  \big).
 \end{aligned}
\end{multline}
It follows from Definition \ref{tail-int} that $x \mapsto J_{\lambda_i}(x)$ is non-increasing on the intervals $(-\infty,0)$ and $(0,\infty)$ for any $i=1,\ldots,d$. Thus,
\begin{equation}\label{eq:tildes}
\tilde{\bs{b}} \eqdefl \big(J_{\lambda_1}(b_1),\ldots, J_{\lambda_d}(b_d)\big) \leqslant  \big(J_{\lambda_1}(a_1),\ldots, J_{\lambda_d}(a_d)\big) \eqdefr \tilde{\bs{a}}.
\end{equation}
Furthermore, by \eqref{eq:J-incr} and \eqref{eq:J-conv},
\begin{equation*}
(-1)^d J_\nu ( (\bs{a},\bs{b}] ) = C\big((\tilde{\bs{b}},\tilde{\bs{a}}] \big). 
\end{equation*}

We establish \eqref{Levy-support} by showing that the intersection of each open orthant of $\R^d$ with $\supp \Lambda_\varepsilon$ is non-empty for any $\varepsilon>0$. To this end, let $\varepsilon>0$, $n \in \N$, and $\bs{i} \in \{0,1\}^d$. Define $\bs{a},\, \bs{b} \in \R^d$ by
\begin{align*}
\bs{a} & \eqdefl \frac{\varepsilon}{\sqrt{d+1}} \bigg(\frac{i_1}{n}-(1-i_1) , \ldots,  \frac{i_d}{n}-(1-i_d) \bigg),\\
\bs{b} & \eqdefl \frac{\varepsilon}{\sqrt{d+1}} \bigg(i_1 -\frac{1-i_1}{n}, \ldots, i_d - \frac{1-i_d}{n}\bigg).
\end{align*}
Note that $\bs{a} \leqslant \bs{b}$ and that $(\bs{a},\bs{b}] \subset B(\bs{0},\varepsilon)$. Moreover, $(\bs{a},\bs{b}]$ belongs to the open orthant
\begin{equation*}
\{(x_1,\ldots,x_d) \in \R^d : (-1)^{1-i_j} x_j> 0,\, j=1,\ldots,d  \},
\end{equation*}
It suffices now to show that $\Lambda ((\bs{a},\bs{b}])>0$ for some $n \in \N$. Since $a_ib_i>0$ for any $i = 1,\ldots,d$, we have (cf.\ the proof of \cite[Lemma 3.5]{Kallsen-Tankov-2006})
\begin{equation*}
\Lambda ((\bs{a},\bs{b}]) = (-1)^d J_\Lambda ( (\bs{a},\bs{b}] ) = C\big((\tilde{\bs{b}},\tilde{\bs{a}}] \big),
\end{equation*}
where $\tilde{\bs{a}}$ and $\tilde{\bs{b}}$ are derived from $\bs{a}$ and $\bs{b}$ via \eqref{eq:tildes}. Observe that
 for any $j = 1,\ldots,d$,
\begin{equation*}
\tilde{a}_j - \tilde{b}_j = J_{\lambda_j}(a_j) - J_{\lambda_j}(b_j) = \begin{cases}
\lambda_j\big(\big(\frac{\varepsilon}{n \sqrt{d+1}},\frac{\varepsilon}{\sqrt{d+1}}\big] \big), & i_j = 1,\\
\lambda_j\big(\big(\frac{-\varepsilon}{\sqrt{d+1}},\frac{-\varepsilon}{n\sqrt{d+1}}\big] \big), & i_j = 0.
\end{cases}
\end{equation*}
It follows now from the assumption \eqref{item:convsupport} that for some sufficiently large $n \geq 2$, we have $\tilde{b}_j < \tilde{a}_j$ for any $j =1,\ldots,d$, that is, $\tilde{\bs{b}} < \tilde{\bs{a}}$. Thanks to the assumption \eqref{item:levydelta}, we can then conclude that $\Lambda ((\bs{a},\bs{b}]) = C\big((\tilde{\bs{b}},\tilde{\bs{a}}] \big) > 0$.
\end{proof}

\begin{example}
We provide here a simple example of a $d$-strictly increasing L\'evy copula, following the construction --- which parallels that of Archimedean copulas \cite[Chapter 4]{Nelsen-2006} in the classical setting --- given by Kallsen and Tankov \cite[Theorem 6.1]{Kallsen-Tankov-2006}. Suppose that $\varphi : [-1,1] \rightarrow [-\infty,\infty]$ is some strictly increasing continuous function such that $\varphi(1)=\infty$, $\varphi(0)=0$, and $\varphi(-1) = -\infty$. Assume, moreover, that $\varphi$ is $d$ times differentiable on $(-1,0)$ and $(0,1)$, so that
\begin{equation}\label{generator}
\frac{\ud^d \varphi(e^x)}{\ud x^d}>0 \quad \textrm{and} \quad \frac{\ud^d \varphi(-e^x)}{\ud x^d}<0 \quad \textrm{for any $x \in (-\infty,0)$.}
\end{equation}
In \cite[Theorem 6.1]{Kallsen-Tankov-2006} it has been shown that the function
\begin{equation*}
C(x_1,\ldots,x_d) \eqdefl \varphi \Bigg( \prod_{i=1}^d \tilde{\varphi}^{-1}(x_i)\Bigg),\quad (x_1,\ldots,x_d) \in \overline{\R}^d,
\end{equation*}
where $\tilde{\varphi}(x) \eqdefl 2^{d-2} \big(\varphi(x)-\varphi(-x)\big)$, $x \in [-1,1]$, is a L\'evy copula. (In fact, in \cite{Kallsen-Tankov-2006} the L\'evy copula property is shown under weaker assumptions with non-strict inequalities in \eqref{generator}.) The argument used in the proof of \cite[Theorem 6.1]{Kallsen-Tankov-2006} to show that $C$ is $d$-increasing translates easily to a proof that $C$ is also strictly $d$-increasing under \eqref{generator}.
One can take, e.g., $\varphi(x) \eqdefl \frac{x}{1-|x|}$, $x \in [-1,1]$, which satisfies the conditions above. In particular,
\begin{equation*}
\begin{aligned}
\frac{\ud^d \varphi(x)}{\ud x^d} & = 
\frac{d!}{(1-x)^{d+1}} > 0, \quad x \in (0,1),\\
(-1)^d \frac{\ud^d \varphi(x)}{\ud x^d} & = -\frac{d!}{(1+x)^{d+1}}<0, \quad x \in (-1,0),
\end{aligned}
\end{equation*}
which ensure that \eqref{generator} holds.
\end{example}

\subsubsection{L\'evy mixing} It is also possible to define new L\'evy measures on $\R^d$, for $d \geqslant 2$, by mixing suitable transformations of a given L\'evy measure on $\R^d$ --- a technique that is called \emph{L\'evy mixing}. We focus here on a particular type of L\'evy mixing, the so-called \emph{Upsilon transformation}, introduced by Barndorff-Nielsen et al.\ \cite{Barndorff-Nielsen-Perez-Abreu-Thorbjornsen-2013} in the multivariate setting. 

The Upsilon transformation amounts to mixing \emph{linear} transformations of a L\'evy measure on $\R^d$. To state the definition of the Upsilon transformation, let $(S,\mathscr{S},\rho)$ be a $\sigma$-finite measure space that parametrizes a family of linear transformations $\R^d \rightarrow \R^d$ via a measurable function $A : S \rightarrow \mathbb{M}_d$. The Upsilon transformation of a L\'evy measure $\Gamma$ on $\R^d$ under $A$, denoted $\Upsilon^0_A[\Gamma]$, is then a Borel measure on $\R^d$ defined by
\begin{equation*}
\Upsilon^0_A[\Gamma](B) \eqdefl \int_S \int_{\R^d} \mathbf{1}_{B \setminus \{ \bs{0}\}}(A(s) \bs{x})  \Gamma(\ud \bs{x})\rho(\ud s), \quad B \in \mathscr{B}(\R^d).
\end{equation*}
Barndorff-Nielsen et al.\ \cite[Theorem 3.3]{Barndorff-Nielsen-Perez-Abreu-Thorbjornsen-2013} have shown that the Upsilon transformation $\Upsilon^0_A$ maps L\'evy measures to L\'evy measures, a crucial property for the validity of this approach, if and only if
\begin{equation}\label{eq:upsilon-Levy}
\rho(\{ s \in S : A(s) \neq 0\})<\infty,\quad
\int_S \| A(s)\|^2 \rho(\ud s)< \infty.
\end{equation}

We show here that the Upsilon transformation preserves the property \eqref{Levy-support}, provided that the function $A$ has a natural non-degeneracy property:\ the matrix $A(s)$ is invertible for any $s$ in a set with positive $\rho$-measure. This result can be applied to construct multivariate L\'evy processes with dependent components for example by applying the Upsilon transformation to the L\'evy measure of a L\'evy process with independent components, which satisfies \eqref{Levy-support} under mild conditions that are easy to check; see Example \ref{independent-Levy}.

\begin{prop}[Upsilon transformation]
Let $(S,\mathscr{S},\rho)$ be a $\sigma$-finite measure space and let $A : (S,\mathscr{S}) \rightarrow \mathbb{M}_d$ be measurable function such that \eqref{eq:upsilon-Levy} holds and
\begin{equation*}
\rho(\{ s \in S : \det (A(s)) \neq 0\} ) > 0.
\end{equation*}
Moreover, let $\Gamma$ be a L\'evy measure on $\R^d$ that satisfies \eqref{Levy-support}. Then the Upsilon transformation $\Lambda \eqdefl \Upsilon^0_A[\Gamma]$ defines a L\'evy measure satisfying \eqref{Levy-support}.
\end{prop}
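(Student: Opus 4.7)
The plan is to argue by contradiction, mirroring the outer layer of the proof of Proposition \ref{polar-supp} and then transferring the failure of \eqref{Levy-support} for $\Lambda$ back to $\Gamma$ via the Upsilon transformation. That $\Lambda$ is itself a L\'evy measure is already guaranteed by \eqref{eq:upsilon-Levy} through the theorem of Barndorff-Nielsen et al.\ cited just above, so only \eqref{Levy-support} requires work.

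Fix $\varepsilon>0$ and suppose for contradiction that $\bs{0} \notin \interior \conv \supp \Lambda_\varepsilon$. Mimicking the proof of Proposition \ref{polar-supp} (invoking the supporting hyperplane theorem or the separating hyperplane theorem, as appropriate), I would produce $\bs{v} \in \mathscr{S}^{d-1}$ such that the open half-space $H^\circ \eqdefl \{\bs{y} \in \R^d : \langle \bs{v},\bs{y}\rangle > 0\}$ satisfies $\Lambda_\varepsilon(H^\circ) = \Lambda\big(H^\circ \cap B(\bs{0},\varepsilon)\big) = 0$. Unfolding the definition of $\Lambda = \Upsilon^0_A[\Gamma]$ then yields
\begin{equation*}
0 \,=\, \int_S \Gamma(G_s) \, \rho(\ud s), \qquad G_s \eqdefl \big\{\bs{x} \in \R^d : A(s)\bs{x} \in B(\bs{0},\varepsilon),\, \langle A(s)^\top \bs{v},\bs{x}\rangle > 0\big\},
\end{equation*}
so $\Gamma(G_s)=0$ for $\rho$-almost every $s \in S$.

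The key transfer step is then a selection argument: since $\{s \in S : \det A(s)\neq 0\}$ has strictly positive $\rho$-measure while the exceptional set above is $\rho$-null, I can pick a single $s_0 \in S$ with $A(s_0)$ invertible and $\Gamma(G_{s_0})=0$. Setting $\delta \eqdefl \varepsilon/\|A(s_0)\|$, the Frobenius bound $\|A(s_0)\bs{x}\| \leqslant \|A(s_0)\|\, \|\bs{x}\|$ guarantees $B(\bs{0},\delta) \subset \{\bs{x}:A(s_0)\bs{x} \in B(\bs{0},\varepsilon)\}$, whence
\begin{equation*}
\Gamma\big(\{\bs{x} \in B(\bs{0},\delta) : \langle A(s_0)^\top \bs{v},\bs{x}\rangle > 0\}\big) \,=\, 0.
\end{equation*}
Invertibility of $A(s_0)$ forces $A(s_0)^\top \bs{v} \neq \bs{0}$, so $\supp \Gamma_\delta$ lies in a closed half-space bounded by a hyperplane through $\bs{0}$; this gives $\bs{0} \notin \interior \conv \supp \Gamma_\delta$, contradicting the hypothesis that $\Gamma$ satisfies \eqref{Levy-support}. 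I anticipate no real obstacle: the proof is essentially a linear change of variables composed with the Upsilon integral, and the only subtlety is the simultaneous selection of $s_0$ in both the invertibility set and the $\rho$-almost-everywhere condition, which is immediate from $\rho(\{s:\det A(s)\neq 0\})>0$ together with $\sigma$-additivity.
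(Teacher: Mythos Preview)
Your proof is correct and follows essentially the same approach as the paper's own proof: both argue by contradiction, reduce failure of \eqref{Levy-support} for $\Lambda$ to the existence of a half-ball $\{\bs{x}\in B(\bs{0},\varepsilon):\langle \bs{v},\bs{x}\rangle>0\}$ of zero $\Lambda$-measure, push this through the Upsilon integral, and use invertibility of $A(s)$ together with the Frobenius bound to produce a half-ball of zero $\Gamma$-measure. The only cosmetic difference is that the paper bounds the Upsilon integral below by $\int_{S_A}\Gamma(H(A(s)^\top\bs{v},\varepsilon/\|A(s)\|))\,\rho(\ud s)$ and notes every term in the integrand is strictly positive on a set of positive $\rho$-measure, whereas you first pass to a $\rho$-a.e.\ statement and then select a single $s_0$; both routes are equivalent.
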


\begin{proof}
Suppose that $\Lambda = \Upsilon^0_A[\Gamma]$ does not satisfy \eqref{Levy-support}. Then, adapting the early steps seen in the proof of Proposition \ref{polar-supp}, we may find an open half-ball $H(\bs{c},\varepsilon) \eqdefl \{ \bs{x} \in \R^d : \langle \bs{x},\bs{c} \rangle>0,\, \| \bs{x}\|<\varepsilon\}$, for some $\bs{c} \in \R^d \setminus \{ \bs{0}\}$ and $\varepsilon>0$, such that $\Lambda \big(H(\bs{c},\varepsilon)\big)=0$.

Assume for now that $s \in S_A \eqdefl \{ s \in S : \det (A(s)) \neq 0\}$. Then $\| A(s)\bs{x} \| \leqslant \|A(s) \| \| \bs{x}\|$ for any $\bs{x} \in \R^d$, where $\| A(s)\|>0$. So we can deduce that if
\begin{equation*}
\bs{x} \in H\bigg(A(s)^\top \bs{c}, \frac{\varepsilon}{\| A(s)\|}\bigg) = \bigg\{ \bs{x} \in \R^d : \langle \bs{x},A(s)^\top \bs{c} \rangle>0,\, \| \bs{x}\|<\frac{\varepsilon}{\| A(s)\|}\bigg\},
\end{equation*}
then $A(s) \bs{x} \in H(\bs{c},\varepsilon)$. Observe also that $H\big(A(s)^\top \bs{c}, \frac{\varepsilon}{\| A(s)\|}\big)$ is an open half-ball since $A(s)^\top \bs{c} \in \R^d \setminus \{ \bs{0}\}$, due to the property $\det(A(s)^\top) = \det (A(s)) \neq 0$.

Noting that $\bs{0} \notin H(\bs{c},\varepsilon)$, we have thus
\begin{equation}\label{eq:Upsilon-estimate}
\begin{split}
0=\Lambda \big(H(\bs{c},\varepsilon)\big) = \Upsilon^0_A[\Gamma]\big(H(\bs{c},\varepsilon)\big) & = \int_S \int_{\R^d} \mathbf{1}_{H(\bs{c},\varepsilon) \setminus \{ \bs{0}\}}(A(s) \bs{x})  \Gamma(\ud \bs{x})\rho(\ud s) \\
& \geqslant \int_{S_A} \int_{\R^d} \mathbf{1}_{H(\bs{c},\varepsilon)}(A(s) \bs{x})  \Gamma(\ud \bs{x})\rho(\ud s) \\
& \geqslant \int_{S_A}  \Gamma\Bigg(H\bigg(A(s)^\top \bs{c}, \frac{\varepsilon}{\| A(s)\|}\bigg)\Bigg)\rho(\ud s).
\end{split}
\end{equation}
Now for any $s \in S_A$, the open half-ball $H\big(A(s)^\top \bs{c}, \frac{\varepsilon}{\| A(s)\|}\big)$ has positive $\Gamma$-measure since $\Gamma$ satisfies \eqref{Levy-support}. But $\rho(S_A)>0$, by assumption, so in view of \eqref{eq:Upsilon-estimate} we have a contradiction.
\end{proof}

%%%%%%%%%%%%%%%%%%%%%%%%%%%%%%%%%%%%%%%%%%%%%%%%%%%%%%%%%%%%%%%%%%%%%%%%%%%%%%%
%%%%%%%%%%%%%%%%%%%%%%%%%%%%%%%%%%%%%%%%%%%%%%%%%%%%%%%%%%%%%%%%%%%%%%%%%%%%%%%
\section{Proofs of the main results}\label{proofs}
%%%%%%%%%%%%%%%%%%%%%%%%%%%%%%%%%%%%%%%%%%%%%%%%%%%%%%%%%%%%%%%%%%%%%%%%%%%%%%%
%%%%%%%%%%%%%%%%%%%%%%%%%%%%%%%%%%%%%%%%%%%%%%%%%%%%%%%%%%%%%%%%%%%%%%%%%%%%%%%

\subsection{Proof of Theorem \ref{Gaussian}}\label{Gaussian-proof}

The proof of the CFS property in the Gaussian case follows the strategy used by Cherny \cite{Cherny-2008}, but adapts it to a multivariate setting.

We start with a multivariate extension of a result \cite[Lemma 2.1]{Cherny-2008} regarding density of convolutions.
The proof of \cite[Lemma 2.1]{Cherny-2008} is based on Titchmarsh's convolution theorem (see Lemma \ref{Titchmarsh} in Appendix \ref{sec:convolution}), whereas we use a multivariate extension of Titchmarsh's convolution theorem (Lemma \ref{lem:KT} in Appendix \ref{sec:convolution}), which is due to Kalisch \cite{Kalisch-1963}, to prove the following result.

\begin{lemma}\label{dense-range}
Let $\Phi = [\Phi_{i,j}]_{i,j \in \{1,\ldots,d\}^2}\in L^2_{\mathrm{loc}}(\R_+,\mathbb{M}_d)$ satisfy \eqref{detstar} and let $0\leqslant t_0 < T < \infty$.
Then the linear operator $T_{\Phi} : L^2([t_0,T],\R^d) \rightarrow C_{\bs{0}}([t_0,T],\R^d)$, given by
\begin{equation*}
(T_{\Phi} \bs{f})(t) \eqdefl \int_{t_0}^t \Phi(t-s) \bs{f}(s) \ud s, \quad t \in [t_0,T], \quad \bs{f} \in L^2([t_0,T],\R^d),
\end{equation*}
has dense range.
\end{lemma}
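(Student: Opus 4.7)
I would argue by Hahn--Banach duality: the range of $T_\Phi$ is dense in $C_{\bs{0}}([t_0, T], \R^d)$ if and only if the only continuous linear functional annihilating that range is the zero functional. By the Riesz representation theorem, such functionals are realised by $\R^d$-valued signed regular Borel measures $\bs{\mu} = (\mu_1, \ldots, \mu_d)$ on $[t_0, T]$ (any atom at $t_0$ plays no role and may be discarded). The task therefore reduces to showing: if
\begin{equation*}
\sum_{i=1}^d \int_{t_0}^T (T_\Phi \bs{f})_i(t)\, \mu_i(\ud t) = 0 \quad \text{for every } \bs{f} \in L^2([t_0, T], \R^d),
\end{equation*}
then $\bs{\mu}$ is the zero measure on $(t_0, T]$.

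To exploit this hypothesis I would expand $(T_\Phi \bs{f})_i(t) = \sum_j \int_{t_0}^t \Phi_{i,j}(t-s) f_j(s)\, \ud s$ and interchange the order of integration via Fubini. Since each $f_j$ ranges freely over $L^2([t_0,T],\R)$, this yields
\begin{equation*}
\sum_{i=1}^d \int_{(s, T]} \Phi_{i,j}(t - s)\, \mu_i(\ud t) = 0 \quad \text{for a.e.\ } s \in [t_0, T],\quad j = 1, \ldots, d.
\end{equation*}
A time reversal $r \eqdefl T - s$, $v \eqdefl T - t$, with $\tilde{\bs{\mu}}$ denoting the pushforward of $\bs{\mu}$ under $t \mapsto T - t$, converts this into the convolution identity $\Phi^\top * \tilde{\bs{\mu}} = \bs{0}$ on $[0, T - t_0]$.

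The key step is then to apply the multivariate Titchmarsh-type theorem of Kalisch (Lemma~\ref{lem:KT} in Appendix~\ref{sec:convolution}). Since the permutation sum in \eqref{def:det} is invariant under transposition (replace $\sigma$ by $\sigma^{-1}$), $\mathrm{det}^*(\Phi^\top) = \mathrm{det}^*(\Phi)$, so $\Phi^\top$ inherits condition \eqref{detstar}. Kalisch's theorem then forces $\tilde{\bs{\mu}} = \bs{0}$ on $[0, T - t_0]$; unwinding the time reversal gives $\bs{\mu} = 0$, which establishes density.

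The principal technical obstacle is that Kalisch's theorem is naturally stated for $L^1_{\mathrm{loc}}$ functions, while $\tilde{\bs{\mu}}$ is only a signed measure. I would bridge this gap by passing to the cumulative distribution $\bs{H}(r) \eqdefl \tilde{\bs{\mu}}([0, r])$, which is of bounded variation and hence lies in $L^1_{\mathrm{loc}}$; integrating $\Phi^\top$ once (i.e., convolving with the Heaviside function) recasts $\Phi^\top * \tilde{\bs{\mu}} = \bs{0}$ as a convolution identity among $L^1_{\mathrm{loc}}$ functions. Condition \eqref{detstar} is preserved under this extra convolution, since by the scalar Titchmarsh theorem the function $\mathbf{1}_{[0,\infty)}^{*d} * \mathrm{det}^*(\Phi)$ vanishes identically near zero if and only if $\mathrm{det}^*(\Phi)$ does.
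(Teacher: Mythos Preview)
Your outline follows the paper's proof closely: both argue via duality, Fubini, a time reversal, and then invoke Kalisch's multivariate Titchmarsh theorem (together with $\mathrm{det}^*(\Phi^\top)=\mathrm{det}^*(\Phi)$). The only substantive difference is the choice of dual space. The paper first reduces (citing Cherny) to showing that $\ran T_\Phi$ is dense in $L^2([t_0,T],\R^d)$ and then takes an annihilator $\bs{h}\in L^2$, which already lies in $L^1_{\mathrm{loc}}$, so Lemma~\ref{lem:KT} applies with no further work. Your route through $C_{\bs{0}}^*$ and signed measures is equally valid but needs the extra integration step you describe; note, however, that after convolving the identity $\Phi^\top*\tilde{\bs{\mu}}=\bs{0}$ with $\mathbf{1}_{[0,\infty)}$ and using $\mathbf{1}_{[0,\infty)}*\tilde{\bs{\mu}}=\bs{H}$, one obtains directly $\Phi^\top*\bs{H}=\bs{0}$. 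Thus it is the \emph{original} kernel $\Phi^\top$ that feeds into Kalisch's theorem, and your verification that $\mathbf{1}_{[0,\infty)}^{*d}*\mathrm{det}^*(\Phi)$ also has $0$ in its essential support --- while correct --- is superfluous.
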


\begin{proof}
By a change of variable, we may assume that $t_0=0$, in which case $T_{\Phi} \bs{f} = \Phi * \bs{f}$.
Analogously to the proof of \cite[Lemma 2.1]{Cherny-2008}, it suffices to show that the range of $T_{\Phi}$, denoted $\ran T_\Phi$, is dense in $L^2([0,T],\R^d)$.

Suppose that, instead, $\cl \ran T_\Phi$, is a strict subset of $L^2([0,T],\R^d)$. Since $\cl \ran T_\Phi$ is a closed linear subspace of $L^2([0,T],\R^d)$, its orthogonal complement $(\cl \ran T_\Phi)^\perp$ is non-trivial. Thus, there exist $\bs{h}=(h_1,\ldots,h_d) \in (\cl \ran T_\Phi)^\perp$ such that 
\begin{equation}\label{h-nondeg}
\esssupp \bs{h} \neq \varnothing.
\end{equation}
Take arbitrary $\bs{f}=(f_1,\ldots,f_d) \in L^2([0,T],\R^d)$. On the one hand, using Fubini's theorem and substitutions $u \eqdefl T-s$ and $v \eqdefl T-t$, we get
\begin{equation*}
\begin{split}
\int_0^T \langle (\Phi * \bs{f})(t), \bs{h}(t)\rangle \ud t & = \int_0^T \sum_{i=1}^n  \bigg(\sum_{j=1}^n \int_0^t \Phi_{i,j} (t-s) f_j(s) \ud s \bigg) h_i(t) \ud t \\
& = \int_0^T \sum_{j=1}^n f_j(T-u) \bigg(\sum_{i=1}^n \int_0^u \Phi_{i,j} (u-v) h_i(T-v) \ud v \bigg)  \ud u \\
& = \int_0^T \langle \bs{f}(T-u) ,(\Phi^\top * \bs{h}(T-\cdot))(u)\rangle\ud u.
\end{split}
\end{equation*}
On the other hand, as $\bs{h} \in (\cl \ran T_\Phi)^\perp$,
\begin{equation*}
\int_0^T \langle(\Phi * \bs{f})(t), \bs{h}(t)\rangle \ud t=\bs{0}. 
\end{equation*}
Thus, $(\Phi^\top * \bs{h}(T-\cdot))(u) = \mathbf{0}$ for almost all $u \in [0,T]$. Since $\mathrm{det}^* (\Phi^\top) = \mathrm{det}^* (\Phi)$, by Lemma \ref{lem:convdet}\eqref{it:transpose}, it follows from Lemma \ref{lem:KT} that $\bs{h}(T-t) = \bs{0}$ for almost all $t \in [0,T]$, contradicting \eqref{h-nondeg}.
\end{proof}

Next we prove a small, but important, result that enables us to deduce the conclusion of Theorem \ref{Gaussian} by establishing an \emph{unconditional} small ball property for the auxiliary process $\bar{\X}^{t_0}$ for any $t_0 \in [0,T)$.
Here, neither Gaussianity nor continuity of $\bs{X}$ is assumed as the result will also be used later in the proof of Theorem \ref{Levy}.
We remark that a similar result, albeit less general, is essentially embedded in the argument that appears in the proof of \cite[Theorem 1.1]{Cherny-2008}.

\begin{lemma}\label{sbp-lemma}
Let $T>0$.
Suppose that $\bs{X}$ is c\`adl\`ag and $\bs{A}^{t_0}$ is continuous for any $t_0 \in [0,T)$. If
\begin{equation}\label{unconditional-sbp}
\prob\bigg[ \sup_{t \in [t_0,T]} \big\|\bar{\X}^{t_0}_t  - \bs{f}(t)\big\| <\varepsilon \bigg]>0 \quad \textrm{for any $t_0 \in [0,T)$, $\bs{f} \in C_{\bs{0}}([t_0,T],\R^d)$ and $\varepsilon>0$,}
\end{equation}
then $(\bs{X}_t)_{t \in [0,T]}$ has the CSBP with respect to $\big(\mathscr{F}^{\bs{L},\mathrm{inc}}_t\big)_{t \in [0,T]}$.
\end{lemma}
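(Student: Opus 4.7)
The plan is to reduce the conditional small-ball bound to the unconditional hypothesis \eqref{unconditional-sbp} via the decomposition \eqref{fundamental}. Two structural properties make this work: $\bar{\bs{X}}^{t_0}$ is constructed from the increments of $\bs{L}$ on $(t_0,\infty)$ and is therefore independent of $\mathscr{F}^{\bs{L},\mathrm{inc}}_{t_0}$, whereas $\bs{A}^{t_0}$ is $\mathscr{F}^{\bs{L},\mathrm{inc}}_{t_0}$-measurable, continuous by hypothesis, and satisfies $\bs{A}^{t_0}_{t_0} = \bs{0}$ by definition.

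Fix $t_0 \in [0,T)$, $\bs{f} \in C_{\bs{0}}([t_0,T],\R^d)$, and $\varepsilon>0$. Substituting $\bs{X}_t - \bs{X}_{t_0} = \bar{\bs{X}}^{t_0}_t + \bs{A}^{t_0}_t$ and viewing $\bar{\bs{X}}^{t_0}$ as a random element of the Polish space $D([t_0,T],\R^d)$ and $\bs{A}^{t_0}$ as a random element of $C([t_0,T],\R^d)$, I would invoke the standard freezing identity for a conditional expectation of a measurable function of an independent random element and an $\mathscr{F}^{\bs{L},\mathrm{inc}}_{t_0}$-measurable one (cf.\ \cite[Proposition 6.6]{Kallenberg-2002}). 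This gives, for $\prob$-a.e.\ $\omega$,
\begin{equation*}
\prob\bigg[ \sup_{t \in [t_0,T]} \| \bs{X}_t-\bs{X}_{t_0} - \bs{f}(t)\| < \varepsilon \,\bigg|\, \mathscr{F}^{\bs{L},\mathrm{inc}}_{t_0} \bigg](\omega) \,=\, \prob\bigg[ \sup_{t \in [t_0,T]} \| \bar{\bs{X}}^{t_0}_t - \bs{g}_\omega(t)\| < \varepsilon \bigg],
\end{equation*}
where $\bs{g}_\omega(t) \eqdefl \bs{f}(t) - \bs{A}^{t_0}_t(\omega)$. For $\prob$-a.e.\ $\omega$ the path $\bs{A}^{t_0}(\omega)$ is continuous with $\bs{A}^{t_0}_{t_0}(\omega) = \bs{0}$, so $\bs{g}_\omega \in C_{\bs{0}}([t_0,T],\R^d)$, and \eqref{unconditional-sbp} applied to $\bs{g}_\omega$ in place of $\bs{f}$ yields strict positivity of the right-hand side. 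This is exactly the CSBP bound; adaptedness of $\bs{X}$ to $\big(\mathscr{F}^{\bs{L},\mathrm{inc}}_t\big)_{t \in [0,T]}$ is immediate from the defining integral \eqref{Xdef}.

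The main obstacle is the technical justification of the freezing identity: (i) joint Borel measurability on $D([t_0,T],\R^d) \times C([t_0,T],\R^d)$ of the indicator of the open set $\{(g,h) : \sup_t \| g(t)+h(t)-\bs{f}(t) \| < \varepsilon\}$, which I would reduce to a countable supremum of coordinate evaluations by exploiting the c\`adl\`ag property of $g+h$ and taking the sup over a countable dense subset of $[t_0,T]$ that includes the endpoint $T$; and (ii) the independence of $\bar{\bs{X}}^{t_0}$ from $\mathscr{F}^{\bs{L},\mathrm{inc}}_{t_0}$, which follows from the Rajput--Rosi\'nski construction of $\bar{\bs{X}}^{t_0}$ as an in-probability limit of integrals depending only on the increments $(\bs{L}_v - \bs{L}_u)_{t_0 \leqslant u < v \leqslant T}$, which are independent of $\mathscr{F}^{\bs{L},\mathrm{inc}}_{t_0}$ by the stationary and independent increments of $\bs{L}$.
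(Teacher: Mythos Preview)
Your proposal is correct and follows essentially the same route as the paper: both use the decomposition \eqref{fundamental}, the $\mathscr{F}^{\bs{L},\mathrm{inc}}_{t_0}$-measurability of $\bs{A}^{t_0}$, and the independence of $\bar{\bs{X}}^{t_0}$ from $\mathscr{F}^{\bs{L},\mathrm{inc}}_{t_0}$ to reduce the conditional small-ball probability to the unconditional hypothesis \eqref{unconditional-sbp} applied to the random target $\bs{f}-\bs{A}^{t_0}$. The only cosmetic difference is that the paper invokes the disintegration formula \cite[Theorem~6.4]{Kallenberg-2002} after noting that $\bar{\bs{X}}^{t_0}$ has a regular conditional law on the Polish space $D([t_0,T],\R^d)$, whereas you appeal directly to the freezing identity \cite[Proposition~6.6]{Kallenberg-2002}; these are equivalent tools here, and your explicit discussion of the measurability and independence technicalities is, if anything, slightly more detailed than the paper's.
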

\begin{proof}
By construction, $(\bs{X}_t)_{t \in [0,T]}$ is adapted to $\big(\mathscr{F}^{\bs{L},\mathrm{inc}}_t\big)_{t \in [0,T]}$.
We have, by \eqref{fundamental}, for any $t_0 \in [0,T)$, $\bs{f} \in C_{\bs{0}}([t_0,T],\R^d)$, and $\varepsilon>0$, 
\begin{equation}\label{CSBP-prob}
\begin{split}
\prob\bigg[ \sup_{t \in [t_0,T]} \|\X_t   - \X_{t_0} - \bs{f}(t)\| <\varepsilon \, \bigg| \, \mathscr{F}_{t_0}\bigg] 
& = \prob\bigg[ \sup_{t \in [t_0,T]} \big\|\bar{\X}^{t_0}_t + \A^{t_0}_t - \bs{f}(t)\big\| <\varepsilon \, \bigg| \, \mathscr{F}_{t_0}\bigg] \\
& = \E \big[U\big(\bar{\X}^{t_0},\bs{f}-\A^{t_0}\big)\big| \mathscr{F}_{t_0}\big],
\end{split}
\end{equation}
where
\begin{equation*}
U(\bs{g},\bs{h}) \eqdefl \mathbf{1}_{\{\sup_{t \in [t_0,T]} \|\bs{g}(t) -\bs{h}(t) \| < \varepsilon\}}, \quad \bs{g} \in D([t_0,T],\R^d), \quad \bs{h} \in C([t_0,T],\R^d).
\end{equation*}
(Note that $\bar{\X}^{t_0}$ is c\`adl\`ag since $\bs{X}$ is c\`adl\`ag and $\bs{A}^{t_0}$ is continuous.) The space $D([t_0,T],\R^d)$ is Polish, so the random element $\bar{\X}^{t_0}$ in $D([t_0,T],\R^d)$ has a regular conditional law given $\mathscr{F}_{t_0}^{\bs{L},\mathrm{inc}}$. However, as $\bar{\X}^{t_0}$ is independent of $\mathscr{F}_{t_0}^{\bs{L},\mathrm{inc}}$, this conditional law coincides almost surely with the unconditional law $\prob\big[\bar{\X}^{t_0} \in \ud \bs{x}\big]$. 
By the $\mathscr{F}_{t_0}^{\bs{L},\mathrm{inc}}$-measurability of the random element $\A^{t_0}$, the disintegration formula \cite[Theorem 6.4]{Kallenberg-2002} yields
\begin{equation*}
\E \big[U\big(\bar{\X}^{t_0},\bs{f}-\A^{t_0}\big)\big| \mathscr{F}_{t_0}\big] = \int_{D([t_0,T],\R^d)}U\big(\bs{x},\bs{f}-\A^{t_0}\big) \prob\big[\bar{\X}^{t_0} \in \ud \bs{x}\big] \quad \textrm{almost surely.}
\end{equation*}
Evidently,
\begin{equation*}
\prob\bigg[ \sup_{t \in [t_0,T]} \big\|\bar{\X}^{t_0}_t  - \bs{h}(t)\big\| <\varepsilon \bigg] = \int_{D([t_0,T],\R^d)}U\big(\bs{x},\bs{h}\big) \prob\big[\bar{\X}^{t_0} \in \ud \bs{x}\big], \quad \bs{h} \in C([t_0,T],\R^d).
\end{equation*}
So, as $\bs{f}-\A^{t_0} \in C_{\bs{0}}([t_0,T],\R^d)$, the property \eqref{unconditional-sbp} ensures that the conditional probability \eqref{CSBP-prob} is positive almost surely.\end{proof}

\begin{proof}[Proof of Theorem \ref{Gaussian}]
Let $T>0$. Note that when $\bs{X}$ and $\A^{t_0}$ for $t_0 \in [0,T)$ are continuous, then so is $\bar{\bs{X}}^{t_0}$. Thus, the condition \eqref{unconditional-sbp} in Lemma \ref{sbp-lemma} is equivalent to
\begin{equation}\label{full-support}
\supp \mathrm{Law}_\prob\big(\bar{\bs{X}}^{t_0}\big) = C_{\bs{0}}([t_0,T],\R^d) \quad \textrm{for any $t_0 \geqslant 0$.}
\end{equation}
where $\mathrm{Law}_\prob\big(\bar{\bs{X}}^{t_0}\big)$ is understood as the law of $\bar{\bs{X}}^{t_0}$ in $C([t_0,T],\R^d)$.
To prove \eqref{full-support}, define $\bs{f} \in C_{\bs{0}}([t_0,T],\R^d)$ by
\begin{equation}\label{fconv}
\bs{f}(t) \eqdefl \int_{t_0}^T \Phi(t-s) \bs{h}(s) \ud s,
\end{equation} 
for some $\bs{h} \in L^2([t_0,T],\R^d)$. By Girsanov's theorem, there exists $\mathbf{Q} \sim \prob$ such that
\begin{equation}\label{equivalence}
\mathrm{Law}_\prob\big(\bar{\bs{X}}^{t_0}\big) = \mathrm{Law}_\mathbf{Q}\big(\bar{\bs{X}}^{t_0}-\bs{f}\big).
\end{equation}
The support of a probability law on a separable metric space is always non-empty, so there exists $\bs{g} \in \supp \mathrm{Law}_\prob\big(\bar{\bs{X}}^{t_0}\big)$, that is, 
\begin{equation*}
\prob\bigg[ \sup_{t \in [t_0,T]} \big\|\bar{\bs{X}}^{t_0}_t - \bs{g}(t) \big\|<\varepsilon \bigg]>0 \quad \textrm{for any $\varepsilon>0$.}
\end{equation*}
By \eqref{equivalence} and the property $\mathbf{Q} \sim \prob$, we deduce that
\begin{equation*}
\prob\bigg[ \sup_{t \in [t_0,T]} \big\|\bar{\bs{X}}^{t_0}_t - \bs{f}(t)- \bs{g}(t) \big\|<\varepsilon \bigg]>0 \quad \textrm{for any $\varepsilon>0$,}
\end{equation*}
whence $\bs{f}+\bs{g} \in \supp \mathrm{Law}_\prob\big(\bar{\bs{X}}^{t_0}\big)$. 
By Lemma \ref{dense-range}, functions $\bs{f}$ of the form \eqref{fconv} are dense in $C_{\bs{0}}([t_0,T],\R^d)$ under \eqref{detstar}, so the claim \eqref{full-support} follows, as $\supp \mathrm{Law}_\prob\big(\bar{\bs{X}}^{t_0}\big)$ is a closed subset of $C_{\bs{0}}([t_0,T],\R^d)$.
\end{proof}

\subsection{Proof of Theorem \ref{Levy}}\label{Levy-proof}

Let us first recall a result due to Simon \cite{Simon-2001}, which describes the \emph{small deviations} of a general L\'evy process.
In relation to the linear space $\mathbb{H}_\Lambda\subset\R^d$, defined in Remark \ref{Levy-remark}\eqref{finvarcase},
we denote by $\mathrm{pr}_{\mathbb{H}_\Lambda} : \R^d \rightarrow \mathbb{H}_\Lambda$ the orthogonal projection onto $\mathbb{H}_\Lambda$. Further, we set
\begin{equation*}
\bs{a}_{\mathbb{H}_\Lambda} \eqdefl \int_{\{\| \bs{x}\| \leqslant 1\}} \mathrm{pr}_{\mathbb{H}_\Lambda}(\bs{x}) \Lambda(\ud \bs{x}) \in \mathbb{H}_\Lambda.
\end{equation*}
We also recall that the \emph{convex cone} generated by a non-empty set $A \subset \R^d$ is given by
\begin{equation*}
\cone A \eqdefl \{ \alpha_1 \bs{x}^1+\cdots+\alpha_k \bs{x}^k : \alpha_i \geqslant 0,\, \bs{x}^i\in A,\, i=1,\ldots,k,\, k \in \N \}.
\end{equation*}

\begin{theorem}[Simon \cite{Simon-2001}]\label{Small-dev}
Let $\bar{\bs{L}} = \big(\bar{\bs{L}}_t\big)_{t \geqslant 0}$ be a L\'evy process in $\R^d$ with characteristic triplet $\big(\bar{\bs{b}},0,\Lambda\big)$ for some $\bar{\bs{b}} \in \R^d$. Then,
\begin{equation*}
\prob\bigg[ \sup_{t\in [0,T]} \big\|\bar{\bs{L}}_t \big\| < \varepsilon \bigg] >0 \quad \textrm{for any $\varepsilon>0$ and $T>0$,}
\end{equation*}
if and only if
\begin{equation}\label{Levy-drift}
\bs{a}_{\mathbb{H}_\Lambda}-\mathrm{pr}_{\mathbb{H}_\Lambda}(\bar{\bs{b}}) \in \cl \mathscr{B}^\varepsilon_{\mathbb{H}_\Lambda} \quad \textrm{for any $\varepsilon>0$,}
\end{equation}
where
\begin{equation*}
\mathscr{B}^\varepsilon_{\mathbb{H}_\Lambda} \eqdefl \mathrm{pr}_{\mathbb{H}_\Lambda}(\mathscr{B}^\varepsilon),\quad  \mathscr{B}^\varepsilon \eqdefl \cl \cone \supp \Lambda_\varepsilon.
\end{equation*}
\end{theorem}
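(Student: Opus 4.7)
The plan is to follow Simon's strategy by isolating large jumps via the L\'evy--It\^o decomposition and then analyzing the projections onto $\mathbb{H}_\Lambda$ and $\mathbb{H}_\Lambda^\perp$ separately. This split matters because in $\mathbb{H}_\Lambda$ the jump activity sums absolutely, giving a pathwise finite-variation representation, while in $\mathbb{H}_\Lambda^\perp$ the small jumps have infinite variation and must be handled through compensated integrals and concentration bounds. Fix $\eta\in(0,\varepsilon/2)$ and write $\bar{\bs{L}}=\bar{\bs{L}}^{\eta,s}+\bar{\bs{L}}^{\eta,\ell}$, where $\bar{\bs{L}}^{\eta,\ell}$ is the independent compound Poisson process of jumps of norm exceeding $\eta$; since $\prob\big[\bar{\bs{L}}^{\eta,\ell}\equiv\bs{0} \text{ on }[0,T]\big]=e^{-T\Lambda(\{\|x\|>\eta\})}>0$, only the small-jump process $\bar{\bs{L}}^{\eta,s}$ matters. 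A direct computation shows that the effective drift $\mathrm{pr}_{\mathbb{H}_\Lambda}(\bar{\bs{b}})-\bs{a}_{\mathbb{H}_\Lambda}$ in the pathwise $\mathbb{H}_\Lambda$-representation is invariant under this truncation.

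For the \emph{if} direction, the $\mathbb{H}_\Lambda$-projection of $\bar{\bs{L}}^{\eta,s}$ satisfies the pathwise identity
\begin{equation*}
\mathrm{pr}_{\mathbb{H}_\Lambda}\big(\bar{\bs{L}}^{\eta,s}_t\big) = \big(\mathrm{pr}_{\mathbb{H}_\Lambda}(\bar{\bs{b}})-\bs{a}_{\mathbb{H}_\Lambda}\big)t + \sum_{0<s\leqslant t} \mathrm{pr}_{\mathbb{H}_\Lambda}\big(\Delta \bar{\bs{L}}^{\eta,s}_s\big),
\end{equation*}
whose jump sum converges absolutely by definition of $\mathbb{H}_\Lambda$. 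The hypothesis $\bs{a}_{\mathbb{H}_\Lambda}-\mathrm{pr}_{\mathbb{H}_\Lambda}(\bar{\bs{b}})\in\cl\mathscr{B}^\eta_{\mathbb{H}_\Lambda}$ lets one approximate this vector by a non-negative combination $\sum_k \alpha_k \mathrm{pr}_{\mathbb{H}_\Lambda}(\bs{x}_k)$ of projections of points $\bs{x}_k\in\supp\Lambda_\eta$; arranging, via positive-probability Poisson events, for approximately the right number of jumps near each $\bs{x}_k$ to occur spread roughly uniformly over $[0,T]$ keeps the $\mathbb{H}_\Lambda$-projection within $\varepsilon/2$ of zero. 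For the complementary projection, the compensated small-jump integral is an $L^2$-martingale with quadratic variation at most $T\int_{\|x\|\leqslant\eta}\|x\|^2\Lambda(\ud x)\to 0$ as $\eta\downarrow 0$; Doob's maximal inequality applied to a suitable exponential martingale bounds $\sup_{t\leqslant T}\|\mathrm{pr}_{\mathbb{H}_\Lambda^\perp}(\cdot)\|$ by $\varepsilon/4$ with positive probability for sufficiently small $\eta$, and the residual linear drift in $\mathbb{H}_\Lambda^\perp$ can be made similarly small. Coupling the two bounds via the joint law of the Poisson random measure yields the desired small ball lower bound.

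For the \emph{only if} direction, I argue by contraposition. Suppose $\bs{a}_{\mathbb{H}_\Lambda}-\mathrm{pr}_{\mathbb{H}_\Lambda}(\bar{\bs{b}})\notin\cl\mathscr{B}^{\varepsilon_0}_{\mathbb{H}_\Lambda}$ for some $\varepsilon_0>0$. Since $\cl\mathscr{B}^{\varepsilon_0}_{\mathbb{H}_\Lambda}$ is a closed convex cone in $\mathbb{H}_\Lambda$, the separating hyperplane theorem yields $\bs{v}\in\mathbb{H}_\Lambda\setminus\{\bs{0}\}$ and $\kappa>0$ with $\langle\bs{v},\bs{y}\rangle\leqslant 0$ for all $\bs{y}\in\cl\mathscr{B}^{\varepsilon_0}_{\mathbb{H}_\Lambda}$ and $\langle\bs{v},\bs{a}_{\mathbb{H}_\Lambda}-\mathrm{pr}_{\mathbb{H}_\Lambda}(\bar{\bs{b}})\rangle>\kappa$. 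Choose $\varepsilon<\min\{\varepsilon_0/2,\kappa T/(2\|\bs{v}\|)\}$. On the small ball event $\sup_{t\leqslant T}\|\bar{\bs{L}}_t\|<\varepsilon$, each jump satisfies $\|\Delta\bar{\bs{L}}_s\|<2\varepsilon\leqslant \varepsilon_0$ by the c\`adl\`ag property, so $\mathrm{pr}_{\mathbb{H}_\Lambda}(\Delta\bar{\bs{L}}_s)\in\mathscr{B}^{\varepsilon_0}_{\mathbb{H}_\Lambda}$ and $\langle\bs{v},\mathrm{pr}_{\mathbb{H}_\Lambda}(\Delta\bar{\bs{L}}_s)\rangle\leqslant 0$. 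Testing the pathwise identity against $\bs{v}$ then gives $\langle\bs{v},\bar{\bs{L}}_T\rangle<-\kappa T$, forcing $\|\bar{\bs{L}}_T\|>\kappa T/\|\bs{v}\|>2\varepsilon$, a contradiction. Hence the small ball probability vanishes for this $\varepsilon$, contradicting the hypothesis.

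The main obstacle is the $\mathbb{H}_\Lambda^\perp$-analysis in the \emph{if} direction: the infinite variation of the small jumps in every direction of $\mathbb{H}_\Lambda^\perp$ precludes a pathwise drift interpretation, and one must exploit martingale concentration with a centering chosen so that the residual linear drift in $\mathbb{H}_\Lambda^\perp$ becomes negligible as $\eta\downarrow 0$. This asymmetric treatment of the two subspaces explains why condition \eqref{Levy-drift} only involves the $\mathbb{H}_\Lambda$-component of $\bar{\bs{b}}$. Coupling the $\mathbb{H}_\Lambda$- and $\mathbb{H}_\Lambda^\perp$-projection bounds is also delicate since both are functions of the same Poisson random measure; one must engineer the small-jump location event so that it is compatible with an independent control of the compensated $\mathbb{H}_\Lambda^\perp$-fluctuations rather than appealing to stochastic independence of the two projections.
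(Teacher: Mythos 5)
You should note at the outset that the paper contains no proof of Theorem~\ref{Small-dev}: it is quoted from Simon \cite{Simon-2001} (with the Gaussian component stripped out), so there is no in-paper argument to compare against, and your proposal has to be judged on its own. Your ``only if'' direction is essentially sound: separating $\bs{a}_{\mathbb{H}_\Lambda}-\mathrm{pr}_{\mathbb{H}_\Lambda}(\bar{\bs{b}})$ from the closed convex cone $\cl\mathscr{B}^{\varepsilon_0}_{\mathbb{H}_\Lambda}$ inside $\mathbb{H}_\Lambda$ by some $\bs{v}$, and using that for $\bs{v}\in\mathbb{H}_\Lambda$ the scalar process $\langle\bs{v},\bar{\bs{L}}_t\rangle$ admits the finite-variation representation $t\,\langle\bs{v},\mathrm{pr}_{\mathbb{H}_\Lambda}(\bar{\bs{b}})-\bs{a}_{\mathbb{H}_\Lambda}\rangle+\sum_{s\leqslant t}\langle\bs{v},\Delta\bar{\bs{L}}_s\rangle$ with every jump term nonpositive on the small-ball event, does force $\langle\bs{v},\bar{\bs{L}}_T\rangle<-\kappa T$ and hence a contradiction for $\varepsilon$ small; the details you leave implicit (absolute summability of $\langle\bs{v},\Delta\bar{\bs{L}}_s\rangle$, the inclusion of small jumps in $\supp\Lambda_{\varepsilon_0}$) are routine.

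The ``if'' direction, however, has a genuine gap at exactly the hard point of Simon's theorem, namely the $\mathbb{H}_\Lambda^\perp$-component. After discarding the jumps of norm exceeding $\eta$, the process $\bar{\bs{L}}^{\eta,s}$ carries the drift $\bar{\bs{b}}-\int_{\{\eta<\|\bs{x}\|\leqslant 1\}}\bs{x}\,\Lambda(\ud\bs{x})$, and the $\mathbb{H}_\Lambda^\perp$-projection of this drift is not small: since $\int_{\{\|\bs{x}\|\leqslant 1\}}|\langle\bs{x},\bs{y}\rangle|\,\Lambda(\ud\bs{x})=\infty$ for every $\bs{y}\in\mathbb{H}_\Lambda^\perp\setminus\{\bs{0}\}$, it typically diverges as $\eta\downarrow 0$, so your claim that ``the residual linear drift in $\mathbb{H}_\Lambda^\perp$ can be made similarly small'' is false in general. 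Concretely, take $d=1$ and $\Lambda(\ud x)=x^{-2}\mathbf{1}_{(0,1]}(x)\,\ud x$, so that $\mathbb{H}_\Lambda=\{0\}$ and condition \eqref{Levy-drift} holds trivially: on the event of no jumps larger than $\eta$, the path equals $t\mapsto\big(\bar{b}-\log(1/\eta)\big)t$ plus a compensated martingale whose quadratic variation is at most $T\eta$, so it exits $B(0,\varepsilon)$ almost immediately once $\eta$ is small, and for no choice of $\eta$ can the truncate-and-apply-Doob scheme deliver a positive lower bound (Doob's inequality only shows the martingale is small with high probability; it can never show that it tracks an exploding drift with positive probability). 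The substance of Simon's proof is precisely that in the infinite-variation directions the intermediate-size jumps must be retained and used to offset the diverging compensator, and that this balancing must be performed jointly with the jump-placement event you invoke on the $\mathbb{H}_\Lambda$-side, whose jumps also have $\mathbb{H}_\Lambda^\perp$-components and whose conditioning invalidates a naive martingale bound. Your closing paragraph names both obstacles but supplies no argument for them, so the forward implication remains unproved.
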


\begin{remark}
Simon \cite{Simon-2001} allows for a Gaussian part in his result, but we have removed it here, as it would be superfluous for our purposes and as removing it leads to a slightly simpler formulation of the result.
\end{remark}

Theorem \ref{Small-dev} implies that a pure-jump L\'evy process whose L\'evy measure satisfies \eqref{Levy-support} 
has the unconditional small ball property.  While Simon presents a closely related result \cite[Corollaire 1]{Simon-2001} that describes explicitly the support of a L\'evy process in the space of c\`adl\`ag functions, it seems more convenient for our needs to use the following formulation:

\begin{corollary}\label{small-ball-property}
Suppose that the L\'evy process $\bs{L}$ has no Gaussian component, i.e., $\mathfrak{S}=0$, and that its L\'evy measure $\Lambda$ satisfies \eqref{Levy-support}. Then, for any $T>0$, $\bs{f} \in C_{\bs{0}}([0,T],\R^d)$, and $\varepsilon>0$,
\begin{equation*}
\prob\bigg[ \sup_{t \in [0,T]} \| \bs{L}_t - \bs{f}(t)\|<\varepsilon\bigg] >0.
\end{equation*}
\end{corollary}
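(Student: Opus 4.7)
The plan is to derive the corollary from Simon's Theorem \ref{Small-dev} in two moves: first to upgrade the ``small ball around zero'' statement to ``small ball around any linear-drift path,'' and then to bootstrap this to shadowing an arbitrary continuous $\bs{f}$ via a piecewise-linear interpolant together with independence of increments.

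For the first move, I would exploit \eqref{Levy-support} decisively. Since $\bs{0} \in \interior \conv \supp \Lambda_\varepsilon$ for every $\varepsilon > 0$, some open ball $B(\bs{0}, \delta)$ is contained in $\conv \supp \Lambda_\varepsilon \subset \cone \supp \Lambda_\varepsilon$; as $\cone \supp \Lambda_\varepsilon$ is a convex cone containing a neighborhood of the origin, it must equal all of $\R^d$. Consequently $\mathscr{B}^\varepsilon = \R^d$ and $\mathscr{B}^\varepsilon_{\mathbb{H}_\Lambda} = \mathbb{H}_\Lambda$, so Simon's condition \eqref{Levy-drift} is trivially satisfied for \emph{any} drift (as both $\bs{a}_{\mathbb{H}_\Lambda}$ and $\mathrm{pr}_{\mathbb{H}_\Lambda}(\bar{\bs{b}})$ already lie in $\mathbb{H}_\Lambda$). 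Applying Theorem \ref{Small-dev} to $\bs{L}_t - \bs{c} t$, which has characteristic triplet $(\bs{b} - \bs{c}, 0, \Lambda)$, I would deduce that
\begin{equation*}
\prob\bigg[\sup_{t \in [0,T]} \|\bs{L}_t - \bs{c} t\| < \varepsilon\bigg] > 0 \quad \textrm{for every $\bs{c} \in \R^d$, $T > 0$, $\varepsilon > 0$.}
\end{equation*}

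For the second move, I would use uniform continuity of $\bs{f}$ on $[0,T]$ to pick $n \in \N$ so that the piecewise-linear interpolant $\bs{f}_n$ at nodes $t_k \eqdefl kT/n$ satisfies $\|\bs{f} - \bs{f}_n\|_\infty < \varepsilon/2$. Writing $\bs{v}_k \eqdefl (\bs{f}(t_k) - \bs{f}(t_{k-1}))/(T/n)$ for the slope on $[t_{k-1}, t_k]$, I would introduce the mutually independent events
\begin{equation*}
E_k \eqdefl \bigg\{\sup_{s \in [0,T/n]} \big\|\bs{L}_{t_{k-1}+s} - \bs{L}_{t_{k-1}} - \bs{v}_k s\big\| < \frac{\varepsilon}{2n}\bigg\},\quad k = 1, \ldots, n,
\end{equation*}
each of which, by stationarity of increments and the first move, carries strictly positive probability, so $\prob[\bigcap_{k=1}^n E_k] > 0$. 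On $\bigcap_k E_k$ the telescoping identity $\bs{L}_{t_k} - \bs{f}(t_k) - (\bs{L}_{t_{k-1}} - \bs{f}(t_{k-1})) = (\bs{L}_{t_k} - \bs{L}_{t_{k-1}}) - \bs{v}_k(T/n)$ bounds the node errors $\|\bs{L}_{t_k} - \bs{f}(t_k)\|$ inductively by $k \varepsilon/(2n)$; combined with the in-piece bound from $E_k$ this would yield $\sup_{t \in [0,T]} \|\bs{L}_t - \bs{f}_n(t)\| \leqslant \varepsilon/2$, and the triangle inequality with $\|\bs{f}_n - \bs{f}\|_\infty < \varepsilon/2$ completes the argument. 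The main obstacle is the first move, and specifically spotting that \eqref{Levy-support} collapses $\mathscr{B}^\varepsilon$ to all of $\R^d$ and thereby renders Simon's condition vacuous; once this is seen, the remainder is essentially concatenation bookkeeping via independence and stationarity.
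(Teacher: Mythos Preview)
Your proposal is correct and follows essentially the same approach as the paper: both reduce to the linear-drift case via piecewise-affine approximation with independence and stationarity of increments (the paper cites Simon's Corollaire~1 for this step, while you spell it out), and both establish the linear-drift case by observing that \eqref{Levy-support} forces $\mathscr{B}^\varepsilon = \R^d$, hence $\mathscr{B}^\varepsilon_{\mathbb{H}_\Lambda} = \mathbb{H}_\Lambda$, rendering Simon's condition \eqref{Levy-drift} automatic for any drift.
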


\begin{proof}
Similarly to the proof of \cite[Corollaire 1]{Simon-2001}, by considering a piecewise affine approximation of $\bs{f}\in C_{\bs{0}}([0,T],\R^d)$ and invoking the independence and stationarity of the increments of $\bs{L}$, it suffices to show that
\begin{equation*}
\prob\bigg[ \sup_{t \in [0,T]} \| \bs{L}_t - \bs{c}t\|<\varepsilon\bigg] >0 \quad \textrm{for any $\bs{c} \in \R^d$, $\varepsilon>0$, and $T>0$.}
\end{equation*}
When $\Lambda$ satisfies \eqref{Levy-support}, there exists $\delta>0$ such that $B(\bs{0},\delta) \subset \conv \supp \Lambda_{\varepsilon}$. Since $\conv \supp \Lambda_{\varepsilon} \subset \cone \supp \Lambda_{\varepsilon}$, it follows that
\begin{equation}\label{cone-supp}
\mathscr{B}^\varepsilon = \R^d.
\end{equation}
(In fact, the conditions \eqref{Levy-support} and \eqref{cone-supp} can be shown to be equivalent.) Consequently, $\mathscr{B}^\varepsilon_{\mathbb{H}_\Lambda} = \mathbb{H}_\Lambda$, and \eqref{Levy-drift} holds for any $\bar{\bs{b}} \in \R^d$. Applying Theorem \ref{Small-dev} to the L\'evy process $\bar{\bs{L}}_t \eqdefl \bs{L}_t - \bs{c}t$, $t \geqslant 0$, which has triplet $(\bar{\bs{b}},0,\Lambda)$ with $\bar{\bs{b}}=\bs{b}-\bs{c}$, completes the proof.
\end{proof}

\begin{proof}[Proof of Theorem \ref{Levy}]
By Lemma \ref{sbp-lemma}, it suffices to show that \eqref{unconditional-sbp} holds.
Let $T>0$, $t_0 \in [0,T)$, $\bs{f} \in C_{\bs{0}}([t_0,T],\R^d)$, and $\varepsilon>0$. Thanks to Lemma \ref{dense-range}, we may in fact take
\begin{equation}\label{f-conv}
\bs{f}(t) \eqdefl \int_{t_0}^t \Phi(t-s)\bs{h}(s) \ud s, \quad t \in [t_0,T],
\end{equation}
for some $\bs{h} \in L^2([t_0,T],\R^d)$, as such functions are dense in $C_{\bs{0}}([t_0,T],\R^d)$. 

Since the components of $\Phi$ are of finite variation, the stochastic integral
\begin{equation}\label{xbar}
\bar{\bs{X}}^{t_0}_t = \int_{t_0}^t \Phi(t-u) \ud \bs{L}_u
\end{equation}
coincides almost surely for any $t \in [t_0,T]$ with an It\^o integral.
The integration by parts formula \cite[Lemma 26.10]{Kallenberg-2002}, applied to both \eqref{f-conv} and \eqref{xbar} yields for fixed $t \in [t_0,T]$,
\begin{equation*}
\bar{\bs{X}}^{t_0}_t -\bs{f}(t) = \Phi_t(t) \bs{L}^{\bs{h}}_t - \Phi_t(t_0) \underbrace{\bs{L}^{\bs{h}}_{t_0}}_{=\bs{0}} - \int_{t_0}^t \ud \Phi_{t}(u) \bs{L}^{\bs{h}}_{u-},
\end{equation*}
where $\bs{L}^{\bs{h}}_u \eqdefl \bs{L}_u - \bs{L}_{t_0} - \int_{t_0}^u \bs{h}(s) \ud s$, $u \in [t_0,T]$ and $\Phi_t(u) \eqdefl \Phi(t-u)$, $u \in [t_0,T]$. Thus, by a standard estimate for Stieltjes integrals,
\begin{equation}\label{TV-bound}
\begin{split}
\big\|\bar{\bs{X}}^{t_0}_t -\bs{f}(t)\big\| & \leqslant \big(\mathscr{V}(\Phi_{t}; [t_0,t]) + \| \Phi_t(t)\|\big) \sup_{u \in [t_0,t]} \big\| \bs{L}^{\bs{h}}_u\big\| \\
& \leqslant \big(\mathscr{V}(\Phi; [0,T]) + \| \Phi(0) \|\big) \sup_{u \in [t_0,T]} \big\| \bs{L}^{\bs{h}}_u\big\|,
\end{split}
\end{equation}
where $\mathscr{V}(G;I)$ denotes the sum of the total variations of the components of a matrix-valued function $G$ on an interval $I \subset \R$. (The assumption that the components of $\Phi$ are of finite variation ensures that both $\mathscr{V}(\Phi; [0,T])$ and $\| \Phi(0) \|$ are finite.)

Finally, by \eqref{TV-bound} and the stationarity of the increments of $\bs{L}$, we have
\begin{equation*}
\begin{split}
\prob \bigg[ \sup_{t \in [t_0,T]} \big\|\bar{\bs{X}}^{t_0}_t -\bs{f}(t)\big\| < \varepsilon \bigg] & \geqslant \prob \bigg[ \sup_{t \in [t_0,T]} \big\| \bs{L}^{\bs{h}}_u\big\| < \tilde{\varepsilon} \bigg] \\
& = \prob \bigg[ \sup_{t \in [0,T-t_0]} \bigg\| \bs{L}_t - \int_{t_0}^{t_0+t} \bs{h}(s) \ud s \bigg\| < \tilde{\varepsilon} \bigg]>0,
\end{split}
\end{equation*}
where $\tilde{\varepsilon} \eqdefl \frac{\varepsilon}{\mathscr{V}(\Phi; [0,T])+ \| \Phi(0) \|} >0$ and where the final inequality follows from Lemma \ref{small-ball-property}.
\end{proof}

\section*{Acknowledgements}

Thanks are due to Ole E. Barndorff-Nielsen,
Andreas Basse-O'Connor,
Nicholas H. Bingham,
Emil Hedevang,
Jan Rosi\'nski, and 
Orimar Sauri
for valuable discussions,
and to Charles M. Goldie and Adam J. Ostaszewski for bibliographic remarks.
M. S. Pakkanen wishes to thank the Department of Mathematics and Statistics at the University of Vaasa for hospitality.

\appendix

%%%%%%%%%%%%%%%%%%%%%%%%%%%%%%%%%%%%%%%%%%%%%%%%%%%%%%%%%%%%%%%%%%%%%%%%%%%%%%%
%%%%%%%%%%%%%%%%%%%%%%%%%%%%%%%%%%%%%%%%%%%%%%%%%%%%%%%%%%%%%%%%%%%%%%%%%%%%%%%
\section{Multivariate extension of Titchmarsh's convolution theorem}\label{sec:convolution}
%%%%%%%%%%%%%%%%%%%%%%%%%%%%%%%%%%%%%%%%%%%%%%%%%%%%%%%%%%%%%%%%%%%%%%%%%%%%%%%
%%%%%%%%%%%%%%%%%%%%%%%%%%%%%%%%%%%%%%%%%%%%%%%%%%%%%%%%%%%%%%%%%%%%%%%%%%%%%%%

Titchmarsh's convolution theorem is a classical result that describes a connection between the support of a convolution and the supports of the convolved functions:

\begin{theorem}[Titchmarsh \cite{Titchmarsh-1926}]\label{Titchmarsh}
Suppose that $f \in L^1_{\mathrm{loc}}(\R_+,\R)$, $g \in L^1_{\mathrm{loc}}(\R_+,\R)$, and $T>0$. If $(f * g)(t) = 0$ for almost every $t \in [0,T]$, then there exist $\alpha \geqslant 0$ and $\beta\geqslant 0$ such that $\alpha + \beta \geqslant T$, $f(t) = 0$ for almost every $t \in [0,\alpha]$, and $g(t)=0$ for almost every $t \in [0,\beta]$.
\end{theorem}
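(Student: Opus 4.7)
The plan is to prove Titchmarsh's convolution theorem via complex analysis, exploiting the fact that truncated Laplace transforms of $L^1$ functions on $[0,T]$ are entire functions of exponential type whose growth encodes the essential support of the underlying function.

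First, I would reduce to the case $f, g \in L^1([0,T],\R)$ by replacing $f$ and $g$ with $f\mathbf{1}_{[0,T]}$ and $g\mathbf{1}_{[0,T]}$. This preserves the hypothesis because, for $t \in [0,T]$, the convolution integral $\int_0^t f(t-s)g(s)\,\ud s$ only sees values of $f$ and $g$ on $[0,T]$. Setting $\alpha \eqdefl \inf \esssupp f$ and $\beta \eqdefl \inf \esssupp g$ (interpreted as $T$ whenever the corresponding function vanishes a.e.\ on $[0,T]$, in which case the conclusion is immediate), I aim to show $\alpha + \beta \geqslant T$.

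Next, I would introduce the entire functions
\[
F(z) \eqdefl \int_0^T \e^{-zt} f(t)\,\ud t, \qquad G(z) \eqdefl \int_0^T \e^{-zt} g(t)\,\ud t, \qquad z \in \mathbb{C},
\]
and note that $H(z) \eqdefl F(z)G(z)$ is the Laplace transform of $f*g$ on $[0,2T]$. The vanishing hypothesis allows us to rewrite $H(z) = \int_T^{2T} \e^{-zt}(f*g)(t)\,\ud t$. The core of the argument is then an application of the theory of indicator functions of entire functions of exponential type (equivalently, the Phragmén--Lindelöf principle). The three inputs I would assemble are: (i) that the indicator of $F$ in the direction of the positive real axis equals $-\alpha$ (a Paley--Wiener-type sharpness statement), and analogously $-\beta$ for $G$; (ii) subadditivity of indicators under products, $h_H \leqslant h_F + h_G$; and (iii) that the truncation of the $H$-integral to $[T,2T]$ forces $h_H(0) \leqslant -T$ in the positive real direction. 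Chaining these yields $-T \geqslant -\alpha - \beta$, i.e.\ $\alpha + \beta \geqslant T$, from which the values claimed in the statement are read off directly by taking these $\alpha$ and $\beta$.

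The main obstacle in executing this plan is item (i): proving that the type of $F$ along the positive real axis coincides exactly with $-\alpha = -\inf \esssupp f$, rather than merely being bounded above by it. This sharp identification requires a genuinely complex-analytic input (for instance, a Phragmén--Lindelöf argument on a half-plane together with Jensen's inequality applied to the zeros of $F$), and is the reason a purely real-variable proof of the theorem is unavailable. Once (i) is secured, the remaining steps are essentially bookkeeping with growth estimates and the multiplicative structure of the Laplace transform.
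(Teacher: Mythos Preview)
The paper does not actually prove Titchmarsh's theorem; it is quoted as a classical result with references to Titchmarsh's original complex-analytic proof and to later real-variable proofs (Doss, Yosida). So there is no ``paper's own proof'' to compare against, and your proposal should be assessed on its own merits.

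As written, the argument has a logical gap in the chaining step. From (i) you obtain $h_F(0)=-\alpha$ and $h_G(0)=-\beta$; from (ii), subadditivity, you get $h_H(0)\leqslant -\alpha-\beta$; from (iii) you get $h_H(0)\leqslant -T$. These two upper bounds on $h_H(0)$ cannot be ``chained'' to conclude $-T\geqslant -\alpha-\beta$. What you actually need is the \emph{lower} bound $h_H(0)\geqslant -\alpha-\beta$, so that together with (iii) you obtain $-\alpha-\beta\leqslant h_H(0)\leqslant -T$. Subadditivity of indicators points the wrong way for this.

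The repair is not free: to get $h_H(0)\geqslant -\alpha-\beta$ you need to know that along the positive real axis the growth of $F$ and $G$ is \emph{regular}, i.e.\ that $\lim_{x\to\infty} x^{-1}\log|F(x)|=-\alpha$ (a genuine limit, not merely a $\limsup$), and likewise for $G$; then $\log|H(x)|=\log|F(x)|+\log|G(x)|$ gives the limit $-\alpha-\beta$ for $H$ as well. Establishing this regularity is exactly the deep complex-analytic step (Ahlfors--Heins / Phragm\'en--Lindel\"of with control of zeros) and is strictly stronger than what you describe in (i). So the ``main obstacle'' you flag is real, but it is more severe than stated: you must upgrade (i) to a statement about limits, after which (ii) becomes an equality and the chain closes.
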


Titchmarsh's \cite{Titchmarsh-1926} own proof of this result is based on complex analysis; for more elementary proofs relying on real analysis, we refer to \cite{Doss-1988,Yosida-1980}.

In the proof of the crucial Lemma \ref{dense-range} above, we use the following multivariate extension of Theorem \ref{Titchmarsh} that was stated by Kalisch \cite[p.\ 5]{Kalisch-1963}, but without a proof. For the convenience of the reader, we provide a proof below.

\begin{lemma}[Kalisch \cite{Kalisch-1963}]\label{lem:KT}
Suppose that $\Phi = [\Phi_{i,j}]_{(i,j) \in \{1,\ldots,d \}^2} \in L^1_{\mathrm{loc}}(\R_+,\mathbb{M}_d)$ satisfies \eqref{detstar}. If $T>0$ and $\bs{f} = (f_1,\ldots,f_d) \in L^1_{\mathrm{loc}}(\R_+,\R^{d})$ is such that $(\Phi * \bs{f})(t) = \bs{0}$ for almost every $t \in [0,T]$, then $\bs{f}(t) = \bs{0}$ for almost every $t \in [0,T]$.
\end{lemma}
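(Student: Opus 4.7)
My plan is to mimic Cramer's rule, with products of scalars replaced by convolutions of functions, in order to reduce the multivariate statement to the classical (scalar) Titchmarsh theorem (Theorem \ref{Titchmarsh}). The fact that $L^1_{\mathrm{loc}}(\R_+,\R)$ under convolution is a commutative, associative algebra (albeit without a unit) will make all the usual determinantal manipulations go through.

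First I would introduce the convolution cofactor and adjugate. For $(i,j) \in \{1,\ldots,d\}^2$, let $\Phi^{(i,j)} \in L^1_{\mathrm{loc}}(\R_+,\M_{d-1})$ denote the submatrix obtained from $\Phi$ by deleting row $i$ and column $j$, and define
\begin{equation*}
\mathrm{Cof}^*(\Phi)_{i,j} \eqdefl (-1)^{i+j}\,\mathrm{det}^*\big(\Phi^{(i,j)}\big), \qquad \mathrm{adj}^*(\Phi)_{i,j} \eqdefl \mathrm{Cof}^*(\Phi)_{j,i}.
\end{equation*}
All these functions lie in $L^1_{\mathrm{loc}}(\R_+,\R)$. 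The next step is to verify the convolution analogue of Cramer's identity,
\begin{equation}\label{eq:cramer-plan}
\sum_{i=1}^d \mathrm{adj}^*(\Phi)_{k,i} * \Phi_{i,j} \;=\; \delta_{k,j}\,\mathrm{det}^*(\Phi), \qquad k,j \in \{1,\ldots,d\},
\end{equation}
as an equality in $L^1_{\mathrm{loc}}(\R_+,\R)$. For $k=j$ this is simply cofactor expansion of $\mathrm{det}^*(\Phi)$ along column $k$, which follows by re-grouping the permutation sum in \eqref{def:det}. For $k \neq j$ the right-hand side is $0$, and this is the convolution analogue of ``a matrix with two equal columns has zero determinant''\!: the left-hand side equals $\mathrm{det}^*(\widetilde\Phi)$, where $\widetilde\Phi$ is obtained from $\Phi$ by replacing column $k$ with column $j$, so its columns $j$ and $k$ coincide; pairing each permutation $\sigma$ with $\sigma \circ (j\;k)$ and using commutativity of convolution shows that the permutation sum defining $\mathrm{det}^*(\widetilde\Phi)$ cancels in pairs. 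Establishing \eqref{eq:cramer-plan} is the main obstacle, but it is a purely algebraic/combinatorial calculation that goes through exactly as in the scalar case because convolution in $L^1_{\mathrm{loc}}(\R_+,\R)$ is commutative and associative; I expect it to mirror the proof of the ordinary Cramer identity line by line.

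Given \eqref{eq:cramer-plan}, the rest is short. Convolving the vector identity $(\Phi * \bs{f})(t) = \bs{0}$ (valid for a.e.\ $t \in [0,T]$) componentwise with $\mathrm{adj}^*(\Phi)_{k,\cdot}$, summing over $i$, and using associativity plus \eqref{eq:cramer-plan} gives
\begin{equation*}
\big(\mathrm{det}^*(\Phi) * f_k\big)(t) \;=\; \sum_{i=1}^d \mathrm{adj}^*(\Phi)_{k,i} * (\Phi * \bs{f})_i\,(t) \;=\; 0 \quad \text{for a.e.\ } t \in [0,T],
\end{equation*}
for every $k \in \{1,\ldots,d\}$. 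Here I use that if a function in $L^1_{\mathrm{loc}}(\R_+,\R)$ vanishes a.e.\ on $[0,T]$, then convolving it from the left by any element of $L^1_{\mathrm{loc}}(\R_+,\R)$ yields a function that still vanishes a.e.\ on $[0,T]$ (immediate from the definition of convolution as an integral from $0$ to $t$).

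Finally, apply the classical Titchmarsh convolution theorem (Theorem \ref{Titchmarsh}) to the pair $\mathrm{det}^*(\Phi)$ and $f_k$ on $[0,T]$: there exist $\alpha,\beta \geqslant 0$ with $\alpha+\beta \geqslant T$ such that $\mathrm{det}^*(\Phi)$ vanishes a.e.\ on $[0,\alpha]$ and $f_k$ vanishes a.e.\ on $[0,\beta]$. The hypothesis \eqref{detstar}, i.e.\ $0 \in \esssupp \mathrm{det}^*(\Phi)$, forces $\alpha = 0$, hence $\beta \geqslant T$, so $f_k$ vanishes a.e.\ on $[0,T]$. As this holds for every $k$, we conclude $\bs{f}(t) = \bs{0}$ for a.e.\ $t \in [0,T]$, which is the desired conclusion.
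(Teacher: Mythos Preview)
Your proposal is correct and follows essentially the same approach as the paper's proof: introduce the convolution adjugate, use the Cramer-type identity \eqref{eq:cramer-plan} (which the paper states separately as Lemma~\ref{lem:convdet}\eqref{it:expansion}) to reduce to $\mathrm{det}^*(\Phi) * f_k = 0$ on $[0,T]$, and then invoke the scalar Titchmarsh theorem together with \eqref{detstar}. Your write-up is in fact slightly more explicit than the paper's, spelling out both why the off-diagonal terms in \eqref{eq:cramer-plan} vanish and why \eqref{detstar} forces $\alpha = 0$ in the application of Theorem~\ref{Titchmarsh}.
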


For the proof of Theorem \ref{lem:KT}, we review some properties of the convolution determinant. 
Formally, we may view the convolution determinant as an ``ordinary'' determinant for square matrices whose elements belong to the space $L^1_{\mathrm{loc}}(\R_+,\R)$, equipped with binary operations $+$ and $*$. 
The triple $\big(L^1_{\mathrm{loc}}(\R_+,\R),+,*\big)$ satisfies all the axioms of a \emph{commutative ring} except the requirement that there is a multiplicative identity (the identity element for convolution would be the Dirac delta function). However, any result that has been shown for determinants of matrices whose elements belong to a commutative ring without relying on a multiplicative identity applies to convolution determinants as well. See, e.g., \cite{Loehr-2014} for a textbook where the theory of determinants is developed for matrices whose elements belong to a commutative ring.

Using the aforementioned connection with ordinary determinants, we can deduce some crucial properties of the convolution determinant. Below, $\mathrm{co}^{*}(\Phi;i ,j) \in L^1_{\mathrm{loc}}(\R_+,\mathbb{R}^d)$ for $(i,j) \in \{1,\ldots,d\}^2$ denotes the \emph{$(i,j)$-convolution cofactor} of $\Phi \in L^1_{\mathrm{loc}}(\R_+,\mathbb{M}_d)$, given by the convolution determinant of the $(d-1) \times (d-1)$ matrix obtained from $\Phi$ by deleting the $i$-th row and the $j$-th column, multiplied with the factor $(-1)^{i+j}$. 

\begin{lemma}[Convolution determinants]\label{lem:convdet}
Let $\Phi \in L^1_{\mathrm{loc}}(\R_+,\mathbb{M}_d)$ and $A \in \mathbb{M}_d$. Then,
\begin{enumerate}[label=(\roman*),ref=\roman*,leftmargin=2.2em]
\item\label{it:product} $\mathrm{det}^* (A\Phi) = \mathrm{det}(A)\mathrm{det}^* (\Phi) = \mathrm{det}^* (\Phi A)$,
\item\label{it:transpose} $\mathrm{det}^* (\Phi)=\mathrm{det}^* \big(\Phi^\top\big)$,
\item\label{it:expansion} for any $j$,\ $k = 1,\ldots,d$,
\begin{equation*}
\sum_{i=1}^n \Phi_{i,j} * \mathrm{co}^{*}(\Phi;i,k) = 
\begin{cases}
\mathrm{det}^* (\Phi), & j = k,\\
0, & j \neq k.
\end{cases} 
\end{equation*}
\end{enumerate}
\end{lemma}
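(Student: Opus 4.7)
The approach is to exploit the observation, anticipated in the text preceding the lemma, that $(L^1_{\mathrm{loc}}(\R_+,\R),+,*)$ is a commutative (semi-)ring and that $\mathrm{det}^*$ is literally the Leibniz formula for the determinant of a matrix with entries in this ring. All three identities are classical facts for determinants over a commutative ring whose standard proofs rely only on bilinearity, associativity and commutativity of the product, never on the presence of a multiplicative identity. Rather than appealing to an abstract framework, I would verify each part directly from Definition~\ref{convdet-definition}, the single recurring workhorse being commutativity of convolution.

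For \eqref{it:product}, I would expand $\mathrm{det}^*(A\Phi)$ from the definition, writing $(A\Phi)_{i,\sigma(i)}=\sum_{k}A_{i,k}\Phi_{k,\sigma(i)}$ and using bilinearity of convolution to obtain
\[
\mathrm{det}^*(A\Phi) \,=\, \sum_{\sigma\in S_n}\mathrm{sgn}(\sigma)\sum_{k_1,\ldots,k_n=1}^{n} A_{1,k_1}\cdots A_{n,k_n}\,\Phi_{k_1,\sigma(1)}*\cdots*\Phi_{k_n,\sigma(n)}.
\]
Commutativity of convolution forces tuples $(k_1,\ldots,k_n)$ with a repeated entry to cancel in the alternating sum over $\sigma$ (swapping the two rows with equal indices flips the sign of the $\sigma$-summand while leaving the convolutional product unchanged), leaving only bijective tuples $k_i=\tau(i)$. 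Reordering the convolution so that the first index runs in natural order and then substituting $\pi=\sigma\tau^{-1}$ factors the right-hand side as $\det(A)\cdot\mathrm{det}^*(\Phi)$. The companion identity $\mathrm{det}^*(\Phi A)=\det(A)\,\mathrm{det}^*(\Phi)$ then follows from \eqref{it:transpose} via $\mathrm{det}^*(\Phi A)=\mathrm{det}^*(A^\top\Phi^\top)=\det(A^\top)\mathrm{det}^*(\Phi^\top)=\det(A)\mathrm{det}^*(\Phi)$.

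For \eqref{it:transpose}, a direct substitution suffices: in the expansion $\mathrm{det}^*(\Phi^\top)=\sum_{\sigma\in S_n}\mathrm{sgn}(\sigma)\,\Phi_{\sigma(1),1}*\cdots*\Phi_{\sigma(n),n}$, I would use commutativity of convolution to reorder the factors by ascending first index and then replace $\sigma$ by $\sigma^{-1}$ (same sign) to recover $\mathrm{det}^*(\Phi)$. For \eqref{it:expansion} with $j=k$, the cofactor expansion along column $j$ is a purely combinatorial identity that is manifest from \eqref{def:det} after grouping the permutations according to the value $\sigma(j)$. For $j\neq k$ the sum $\sum_i \Phi_{i,j}*\mathrm{co}^*(\Phi;i,k)$ equals $\mathrm{det}^*(\tilde\Phi)$, where $\tilde\Phi$ is obtained from $\Phi$ by overwriting column $k$ with column $j$; since $\tilde\Phi$ has two identical columns, the alternating property of $\mathrm{det}^*$ in its columns (immediate from the evident alternating property in rows together with \eqref{it:transpose}) yields $\mathrm{det}^*(\tilde\Phi)=0$.

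The only delicate point in this programme is the vanishing of the non-injective summands in the proof of \eqref{it:product}; this rests squarely on commutativity of convolution, which permits free reordering of the factors in $\Phi_{k_1,\sigma(1)}*\cdots*\Phi_{k_n,\sigma(n)}$ before the sign-flip argument is applied. Everything else is a mechanical transcription of the classical Leibniz-formula manipulations from the scalar to the convolutional setting.
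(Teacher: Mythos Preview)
Your proposal is correct and follows essentially the same route as the paper: both transcribe the classical Leibniz-formula proofs of determinant identities over a commutative ring into the convolutional setting, relying on bilinearity and commutativity of $*$ and avoiding any use of a multiplicative identity. The paper simply defers to the corresponding sections of a linear algebra text, while you spell out the manipulations explicitly; the key computational step you highlight for \eqref{it:product} is exactly the distributive expansion the paper singles out as ``vital''.
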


\begin{proof} Property \eqref{it:product} can be shown adapting \cite[Section 5.13]{Loehr-2014}; in the proof it is vital to note the distributive property:
\begin{equation*}
\begin{split}
(A\Phi)_{1,\sigma(1)} * \cdots * (A\Phi)_{d,\sigma(d)} & =
\Bigg( \sum_{k_1=1}^d A_{1,k_1} \Phi_{k_1,\sigma(j)}\Bigg) \, * \cdots * \, \Bigg( \sum_{k_d=1}^d A_{d,k_d} \Phi_{k_d,\sigma(d)}\Bigg) \\
& = \sum_{k_1=1}^d\cdots \sum_{k_d=1}^d (A_{1,k_1}\cdots A_{d,k_d}) (\Phi_{k_1,\sigma(1)}* \cdots * \Phi_{k_d,\sigma(d)})
\end{split}
\end{equation*}
for any $\sigma \in S_d$.

Properties \eqref{it:transpose} and \eqref{it:expansion} can be shown following \cite[Section 5.5]{Loehr-2014} and \cite[Section 5.11]{Loehr-2014}, respectively. We note that while an identity matrix appears in \cite[Section 5.11]{Loehr-2014}, in the relevant part of \cite[Section 5.11]{Loehr-2014} it is merely used to express the property \eqref{it:expansion} in a matrix form. 
\end{proof}

\begin{proof}[Proof of Lemma \ref{lem:KT}] Consider the convolution adjugate matrix of $\Phi$,
\begin{equation*}
\mathrm{adj}^* (\Phi) \eqdefl \begin{bmatrix} \mathrm{co}^{*}(\Phi;1,1) & \cdots & \mathrm{co}^{*}(\Phi;d,1) \\
\vdots & \ddots & \vdots \\
\mathrm{co}^{*}(\Phi;1,d) & \cdots & \mathrm{co}^{*}(\Phi;d,d)
\end{bmatrix} \in L^1_{\mathrm{loc}}(\R_+,\mathbb{M}_d).
\end{equation*}
On the one hand, the $r$-th row of $\mathrm{adj}^* (\Phi) * (\Phi * \bs{f})$, for any $r =1,\ldots,d$, can be calculated as
\begin{equation*}
\begin{split}
\sum_{i=1}^n \mathrm{co}^{*}(\Phi;i,r) * \bigg(\sum_{j=1}^n \Phi_{i,j} * f_j\bigg) & = \sum_{j=1}^n \bigg( \sum_{i=1}^n \Phi_{i,j}*\mathrm{co}^{*}(\Phi;i,r)\bigg) * f_j \\
& = \mathrm{det}^* (\Phi) * f_r,
\end{split}
\end{equation*}
using associativity and commutativity of convolutions and invoking Lemma \ref{lem:convdet}\eqref{it:expansion}. On the other hand, since $(\Phi * \bs{f})(t) = \bs{0}$ for almost every $t \in [0,T]$, we have that
\begin{equation*}
\big(\mathrm{adj}^* (\Phi) * (\Phi * \bs{f})\big)(t)=\bs{0}\quad \textrm{for all $t \in [0,T]$.}
\end{equation*}
Thus, $\big(\mathrm{det}^*(\Phi) * f_r\big)(t) = 0$ for any $t \in [0,T]$ and $r = 1,\ldots,d$.
In view of Theorem \ref{Titchmarsh}, we can then conclude that $\bs{f}(t) = \mathbf{0}$ for almost every $t \in [0,T]$. 
\end{proof}

\begin{remark}
The condition \eqref{detstar} is, in general, unrelated with the similar condition
\begin{equation}\label{det-supp}
0 \in \esssupp \mathrm{det} (\Phi),
\end{equation}
where $\mathrm{det} (\Phi)$ stands for the function $\det\big(\Phi(t)\big)$, $t \geqslant 0$. Remark \ref{rv-example}\eqref{rv-example-2} provides an example of a function $\Phi$ that satisfies \eqref{detstar} but not \eqref{det-supp}. For an example that satisfies \eqref{det-supp} but not \eqref{detstar}, consider
\begin{equation*}
\Phi(t) \eqdefl \begin{bmatrix}
2 t & t^2 \\
3 t^2 & t^3
\end{bmatrix}, \quad t \geqslant 0.
\end{equation*}
We have
$\mathrm{det}\big(\Phi(t)\big)= 2t^4 - 3t^4 = -t^4 < 0$ for  $t>0$,
while (cf.\ Remark \ref{rv-example}\eqref{rv-example-2})
\begin{equation*}
\mathrm{det}^*(\Phi)(t) = \big(2\, \mathrm{B}(2,4) - 3 \, \mathrm{B}(3,3)\big) t^{5} = 0, \quad t \geqslant 0,
\end{equation*}
where the beta function $\mathrm{B}$ satisfies $2\, \mathrm{B}(2,4) = 3 \, \mathrm{B}(3,3)$ by \cite[Equation 1.5(6)]{Erdelyi-et-al-1953}.
\end{remark}

\section{Proof of Lemma \ref{lem:conv}}\label{RV-app}

Let us first recall \emph{Potter's bounds}, which enable us to estimate the behavior of a regularly varying function near zero. 

\begin{lemma}[Potter]\label{potter}
Let $f \in \mathscr{R}_0(\alpha)$ for some $\alpha \in \R$. Then for any $\varepsilon>0$ and $c>0$ there exists $t_0=t_0(c,\varepsilon)>0$ such that
\begin{equation*}
\frac{f(t)}{f(u)} \leqslant c \max\bigg\{ \bigg(\frac{t}{u}\bigg)^{\alpha+\varepsilon},\, \bigg(\frac{t}{u}\bigg)^{\alpha-\varepsilon} \bigg\} \quad \textrm{for all $t \in (0,t_0)$ and $u \in (0,t_0)$.}
\end{equation*}
\end{lemma}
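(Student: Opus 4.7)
The plan is to reduce Potter's bound at zero to its classical counterpart at infinity (see, e.g., \cite[Theorem 1.5.6]{Bingham-Goldie-Teugels-1987}). As noted in the paper immediately before Lemma \ref{lem:add}, regular variation at zero and at infinity are linked by the substitution $t \mapsto 1/t$. Concretely, if $f \in \mathscr{R}_0(\alpha)$, then the function $\tilde{f}(s) \eqdefl f(1/s)$, defined for all sufficiently large $s > 0$, satisfies
\begin{equation*}
\frac{\tilde{f}(\ell s)}{\tilde{f}(s)} \,=\, \frac{f(\ell^{-1} s^{-1})}{f(s^{-1})} \,\longrightarrow\, \ell^{-\alpha},\quad s\to\infty,\quad \ell>0,
\end{equation*}
and is therefore regularly varying at infinity with index $-\alpha$.

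The second step is to apply the classical Potter bound at infinity to $\tilde{f}$: for any $\varepsilon > 0$ and $c > 1$ there exists $s_0 = s_0(c,\varepsilon)$ such that
\begin{equation*}
\frac{\tilde{f}(s)}{\tilde{f}(r)} \,\leqslant\, c \max\left\{(s/r)^{-\alpha+\varepsilon},\, (s/r)^{-\alpha-\varepsilon}\right\}, \quad s,r \geqslant s_0.
\end{equation*}
Translating back via $t \eqdefl 1/s$, $u \eqdefl 1/r$, and $t_0 \eqdefl 1/s_0$, the condition $s, r \geqslant s_0$ becomes $t, u \in (0, t_0)$, while the ratio $s/r$ equals $u/t$. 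Substituting into the displayed inequality yields
\begin{equation*}
\frac{f(t)}{f(u)} \,\leqslant\, c \max\left\{(u/t)^{-\alpha+\varepsilon},\, (u/t)^{-\alpha-\varepsilon}\right\} \,=\, c \max\left\{(t/u)^{\alpha-\varepsilon},\, (t/u)^{\alpha+\varepsilon}\right\},
\end{equation*}
which is precisely the claimed bound.

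I do not anticipate any genuine obstacle: once the reduction is in place, the argument is essentially bookkeeping. The only points requiring a little care are tracking the two sign flips introduced by the inversion (the index of regular variation becomes $-\alpha$, and the ratio $s/r$ becomes $u/t$), and ensuring that $f$ is defined on a one-sided neighbourhood of zero so that $\tilde{f}$ is defined on a neighbourhood of infinity.
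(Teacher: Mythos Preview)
Your proposal is correct and follows essentially the same approach as the paper: reduce to the classical Potter bound at infinity via the substitution $s = 1/t$, invoking \cite[Theorem 1.5.6(iii)]{Bingham-Goldie-Teugels-1987}. The paper's proof is just a two-sentence version of what you wrote, and your careful tracking of the two sign flips (index $\alpha \mapsto -\alpha$, ratio $s/r = u/t$) is exactly right; note also that your restriction to $c>1$ is the correct hypothesis for Potter's bound (the lemma as stated with $c>0$ is vacuous for $c<1$, as one sees by setting $t=u$).
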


\begin{proof}
We have $f \in \mathscr{R}_0(\alpha)$ if and only if $x \mapsto f(1/x)$ is regularly varying at infinity with index $-\alpha$ (see \cite[pp.\ 17--18]{Bingham-Goldie-Teugels-1987}). The assertion follows then from the Potter's bounds for regular variation at infinity \cite[Theorem 1.5.6(iii)]{Bingham-Goldie-Teugels-1987}.
\end{proof}

\begin{proof}[Proof of Lemma \ref{lem:conv}]
We can write
\begin{equation*}
(f * g)(t) = \int_0^t f(t-u) g(u) \ud u = \int_0^1 t f\big(t(1-s)\big) g(ts) \ud s, \quad t > 0,
\end{equation*}
where we have made the substitution $s \eqdefl u/t$. Thus,
\begin{equation*}
\frac{(f * g)(t)}{t f(t) g(t)} = \int_0^1 \frac{f\big(t(1-s)\big)}{f(t)} \frac{g(ts)}{g(t)} \ud s, \quad t > 0,
\end{equation*}
where the integrand satisfies, for any $s \in (0,1)$,
\begin{equation}\label{pointwise}
\frac{f\big(t(1-s)\big)}{f(t)} \frac{g(ts)}{g(t)} \xrightarrow[t \rightarrow 0+]{} (1-s)^\alpha s^\beta.
\end{equation}

Using Lemma \ref{potter}, we can find for any $\varepsilon \in (0,\alpha+1)$ and $\delta \in (0,\beta+1)$, a threshold $t_0>0$ such that
\begin{align*}
\frac{f\big(t(1-s)\big)}{f(t)} & \leqslant 2 \max \big\{(1-s)^{\alpha-\varepsilon},(1-s)^{\alpha+\varepsilon}\big\}\leqslant 2 (1-s)^{(\alpha-\varepsilon)_-}, \\
\frac{g(ts)}{g(t)} & \leqslant 2 \max \big\{s^{\beta-\delta},s^{\beta+\delta} \big\} \leqslant 2 s^{(\beta-\delta)_-},
\end{align*}
for all $t \in (0,t_0)$ and $s \in (0,1)$, where $x_- \eqdefl \min\{x,0\}$ for all $x \in \R$. This yields the dominant
\begin{equation*}
\frac{f\big(t(1-s)\big)}{f(t)} \frac{g(ts)}{g(t)} \leqslant 4 s^{(\beta-\delta)_-}(1-s)^{(\alpha-\varepsilon)_-},
\end{equation*}
valid for all $t \in (0,t_0)$ and $s \in (0,1)$, which satisfies
\begin{equation*}
4 \int_0^1 s^{(\beta-\delta)_-}(1-s)^{(\alpha-\varepsilon)_-} \ud s = 4\, \mathrm{B}\big(1+(\beta-\delta)_- ,1+(\alpha-\varepsilon)_- \big)< \infty,
\end{equation*}
since $\alpha-\varepsilon>-1$ and $\beta-\delta > -1$.
The asserted convergence \eqref{rv-conv} follows now from \eqref{pointwise} by Lebesgue's dominated convergence theorem. 

Finally, $f * g \in \mathscr{R}_0(\alpha+\beta+1)$, since $f * g \geqslant 0$ and for any $\ell>0$,
\begin{equation*}
\frac{(f * g)(\ell t)}{(f * g)(t)} = \frac{(f * g)(\ell t)}{\ell t f(\ell t) g(\lambda t)} \frac{\ell f(\ell t) g(\ell t)}{f(t) g(t)} \frac{t f(t) g(t)}{(f * g)(t)} \xrightarrow[t \rightarrow 0+]{} \ell^{\alpha+\beta+1},
\end{equation*}
as implied by the limits
\begin{align*}
\lim_{t \rightarrow 0+}\frac{(f * g)(\ell t)}{\ell t f(\ell t) g(\ell t)} & = \mathrm{B}(\alpha+1,\beta+1), & \lim_{t \rightarrow 0+} \frac{\ell  f(\ell t) g(\ell t)}{ f(t) g(t)} & = \ell^{\alpha+\beta+ 1},\\
\lim_{t \rightarrow 0+} \frac{t f(t) g(t)}{(f * g)(t)} &= \frac{1}{\mathrm{B}(\alpha+1,\beta+1)},
\end{align*}
which follow from \eqref{rv-conv} and from the definition of regular variation at zero.
\end{proof}

\section{Conditional small ball property and hitting times}\label{app:hitting-times}

Bruggeman and Ruf \cite{Bruggeman-Ruf-2015} have recently studied the ability of a one-dimensional diffusion to hit arbitrarily fast any point of its state space. We remark that a similar property can be deduced for possibly non-Markovian processes directly from the CSBP. More precisely, in the multivariate case, any process that has the CSBP is able to hit arbitrarily fast any (non-empty) open set in $\R^d$ with positive conditional probability, even after any stopping time. While the following result is similar in spirit to some existing results in the literature (see, e.g., \cite[Lemma A.2]{Guasoni-Rasonyi-Schachermayer-2008}), it is remarkable enough that it deserves to be stated (and proved) here in a self-contained fashion.

\begin{prop}[Hitting times, multivariate case]\label{prop:hitting-times}
Suppose that $(\bs{Y}_t)_{t \in [0,T]}$ has the CSBP with respect to some filtration $(\mathscr{F}_t)_{t\in [0,T]}$. Then for any stopping time $\tau$ such that $\prob[\tau < T]>0$ and for any non-empty open set $A \subset \R^d$, the stopping time
\begin{equation*}
H^A_{\tau} \eqdefl \inf \{ t \in [\tau,T] : \bs{Y}_t \in A\}
\end{equation*}
satisfies for any $\delta > 0$,
\begin{equation*}
\prob\big[H^A_{\tau} < \tau + \delta \, \big| \, \mathscr{F}_{\tau}\big] >0 \quad \textrm{almost surely on $\{ \tau < T \}$.}
\end{equation*}
\end{prop}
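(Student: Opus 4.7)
The plan is to apply the stopping-time formulation of the CSBP (Remark \ref{CFS-rem}(i)) with a deterministic continuous path that drives $\bs{Y}$ into $A$ by time $\tau + \delta/2$; the subtlety that the natural target path depends on the random value $\bs{Y}_\tau$ is dealt with by a countable partition over a dense sequence in $\R^d$.

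First I would reduce to the case $\tau + \delta \leqslant T$ on $\{\tau < T\}$. Since $\{H^A_\tau < \tau + \delta'\} \subseteq \{H^A_\tau < \tau + \delta\}$ for $0 < \delta' \leqslant \delta$ and $\{\tau < T\} = \bigcup_{\delta' \in \mathbb{Q} \cap (0,\delta)} \{\tau \leqslant T - \delta'\}$ up to null sets, it suffices to prove positivity of the conditional probability on each event $\{\tau \leqslant T - \delta'\}$. So henceforth I assume $\tau + \delta \leqslant T$ on $\{\tau < T\}$.

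Fix $\bs{a} \in A$ and $\varepsilon > 0$ with $B(\bs{a}, 2\varepsilon) \subset A$, and let $(\bs{y}_k)_{k \in \N}$ be dense in $\R^d$. The disjoint $\mathscr{F}_\tau$-measurable sets
\[
F_k \eqdefl \{\tau < T\} \cap \Bigl\{\bs{Y}_\tau \in B(\bs{y}_k,\varepsilon/2) \setminus \bigcup_{j<k} B(\bs{y}_j, \varepsilon/2)\Bigr\}, \quad k \in \N,
\]
cover $\{\tau < T\}$. For each $k$ set $\bs{g}_k(u) \eqdefl (\bs{a} - \bs{y}_k)\bigl((2u/\delta) \wedge 1\bigr)$ for $u \in [0, T]$, a deterministic continuous path starting at $\bs{0}$ that reaches $\bs{a} - \bs{y}_k$ at $u = \delta/2$ and remains constant thereafter. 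Applying the stopping-time CSBP at $\tau$ with the shifted path $t \mapsto \bs{g}_k(t - \tau)$ on $[\tau, T]$ yields
\[
\prob\bigl[E_k \,\bigm|\, \mathscr{F}_\tau\bigr] > 0 \text{ a.s.\ on } \{\tau < T\}, \quad E_k \eqdefl \Bigl\{\sup_{t \in [\tau, T]} \|\bs{Y}_t - \bs{Y}_\tau - \bs{g}_k(t - \tau)\| < \varepsilon/2\Bigr\}.
\]
On $F_k$, if $E_k$ holds, the triangle inequality gives
\[
\|\bs{Y}_{\tau + \delta/2} - \bs{a}\| \leqslant \|\bs{Y}_{\tau + \delta/2} - \bs{Y}_\tau - (\bs{a} - \bs{y}_k)\| + \|\bs{y}_k - \bs{Y}_\tau\| < \varepsilon,
\]
so $\bs{Y}_{\tau + \delta/2} \in B(\bs{a}, \varepsilon) \subset A$ and $H^A_\tau \leqslant \tau + \delta/2 < \tau + \delta$. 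Using $\mathscr{F}_\tau$-measurability of $F_k$ one gets $\mathbf{1}_{F_k} \prob[H^A_\tau < \tau + \delta \mid \mathscr{F}_\tau] \geqslant \mathbf{1}_{F_k} \prob[E_k \mid \mathscr{F}_\tau] > 0$ almost surely; summing over $k$ covers $\{\tau < T\}$.

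The main obstacle is the randomness of $\bs{Y}_\tau$: the CSBP only supplies positive conditional probability around deterministic continuous paths, while the natural path for entering $A$ would depend on $\bs{Y}_\tau$. The countable-partition argument above neatly bypasses this at the cost of a factor of $1/2$ in the radius parameter, using only that $A$ contains an open ball. A minor technical point is validating the shifted stopping-time CSBP (with a fixed $\bs{g}_k$ and a random shift by $\tau$), which is the natural reading of Remark \ref{CFS-rem}(i) and can in any case be reduced to the deterministic version via a further dyadic partition of the range of $\tau$.
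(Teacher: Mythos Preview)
Your argument is correct and rests on the same underlying idea as the paper's proof—a countable partition to handle the random starting point, followed by an application of the CSBP with a suitable deterministic target path—but the two proofs are organised differently. The paper works directly with the \emph{deterministic-time} CSBP: given $E \in \mathscr{F}_\tau$ with $E\subset\{\tau<T\}$ and $\prob[E]>0$, it partitions $E$ simultaneously over rational times $q$ (with $q-\delta/2<\tau\leqslant q$) and rational spatial locations $\bs{z}$ (close to $\bs{Y}_q$), picks one cell $E_{q_0,\bs{z}_0}\in\mathscr{F}_{q_0}$ of positive probability, and applies the CSBP at the \emph{fixed} time $q_0$. Your version instead partitions only over the spatial location of $\bs{Y}_\tau$ and then invokes the stopping-time formulation of Remark~\ref{CFS-rem}(i) with a target path shifted by the random amount $\tau$. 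This is slicker if one accepts the stopping-time CSBP with $\mathscr{F}_\tau$-measurable target paths as given, but—as you yourself flag—validating that version requires exactly the kind of dyadic time discretisation the paper carries out explicitly. Once that deferred step is unpacked, the two arguments coincide.
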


\begin{proof}
Let $\delta>0$ and let $E \in \mathscr{F}_{\tau}$ be such that $E \subset \{ \tau < T \}$ and $\prob[E]>0$. Clearly, it suffices to show that
\begin{equation*}
\begin{split}
\E\Big[\mathbf{1}_E \prob\big[H^A_{\tau} < \tau + \delta \, \big| \, \mathscr{F}_{\tau}\big]\Big] & = \E\Big[\mathbf{1}_E \mathbf{1}_{\{H^A_{\tau} < \tau+\delta \}}\Big] \\
& = \prob\big[E \cap \big\{H^A_{\tau} < \tau+\delta \big\}\big]> 0.
\end{split}
\end{equation*}
By the assumption that the set $A$ is non-empty and open in $\R^d$, there exist $\bs{x}_A \in \R^d$ and $\varepsilon>0$ such that $B(\bs{x}_A,\varepsilon) \subset A$.
We can write $E = \bigcup_{\bs{z} \in \mathbb{Q}^d}\bigcup_{q \in [0,T) \cap \mathbb{Q}} E_{q,\bs{z}}$ with
\begin{equation*}
E_{q,\bs{z}} \eqdefl E \cap \bigg\{ q-\frac{\delta}{2}< \tau \leqslant q\bigg\} \cap \bigg\{ \| \bs{Y}_q - \bs{z}\| < \frac{\varepsilon}{2}\bigg\} \in \mathscr{F}_q.
\end{equation*}
It follows then that $\prob[E_{q_0,\bs{z}_0}]>0$ for some $q_0 \in [0,T) \cap \mathbb{Q}$ and $\bs{z}_0 \in \mathbb{Q}^d$, as $\prob[E]>0$. 

Define now $\bs{f} \in C_{\bs{0}}([q_0,T],\R^d)$ by
\begin{equation*}
\bs{f}(t) \eqdefl \frac{t-q_0}{T'-q_0} (\bs{x}_A-\bs{z}_0), \quad t \in [q_0,T],
\end{equation*}
where $T' \eqdefl \min \{ q_0 + \frac{\delta}{2},T\}$. Since
\begin{equation*}
\begin{split}
\|\bs{Y}_{T'} -\bs{x}_A\|  & \leqslant \| \bs{Y}_{T'}- \bs{Y}_{q_0} - \bs{f}(T') \| + \| \bs{Y}_{q_0} - \bs{z}_0\| \\
& \leqslant \sup_{t \in [q_0,T]}\| \bs{Y}_{t}- \bs{Y}_{q_0} - \bs{f}(t) \| + \| \bs{Y}_{q_0} - \bs{z}_0\|,
\end{split}  
\end{equation*}
we find that $\|\bs{Y}_{T'} -\bs{x}_A\| < \varepsilon$ on $E' \eqdefl E_{q_0,\bs{z}_0} \cap \{ \sup_{t \in [q_0,T]}\| \bs{Y}_{t}- \bs{Y}_{q_0} - \bs{f}(t) \|<\frac{\varepsilon}{2}\}$ and, a fortiori, that
\begin{equation*}
H^A_\tau \leqslant T' \leqslant q_0 + \frac{\delta}{2} < \tau + \delta \quad \textrm{on $E'$.}
\end{equation*}
Finally, as $E' \subset E$, we have
\begin{equation*}
\prob\big[E \cap \big\{H^A_{\tau} < \tau+\delta \big\}\big] \geqslant \prob[E'] \geqslant \E\bigg[\mathbf{1}_{E_{q_0,\bs{z}_0}}\prob\bigg[ \sup_{t \in [q_0,T]}\| \bs{Y}_{t}- \bs{Y}_{q_0} - \bs{f}(t) \|<\frac{\varepsilon}{2} \, \bigg| \, \mathscr{F}_{q_0}\bigg] \bigg] >0,
\end{equation*}
where the ultimate inequality follows from the CSBP.
\end{proof}

In the univariate case, a continuous process with CFS is able to hit any point in $\R$ arbitrarily fast. This is a straightforward corollary of Proposition \ref{prop:hitting-times}.

\begin{corollary}[Hitting times, univariate continuous case]
Suppose that a univariate continuous process $(Y_t)_{t \in [0,T]}$ has CFS with respect to some filtration $(\mathscr{F}_t)_{t\in [0,T]}$. Then for any stopping time $\tau$ such that $\prob[\tau < T]>0$ and for any $x \in \R$, the stopping time
\begin{equation*}
H^x_{\tau} \eqdefl \inf \{ t \in [\tau,T] : Y_t = x\}
\end{equation*}
satisfies for any $\delta > 0$,
\begin{equation*}
\prob\big[H^x_{\tau} < \tau + \delta \, \big| \, \mathscr{F}_{\tau}\big] >0 \quad \textrm{almost surely on $\{ \tau < T \}$.}
\end{equation*}
\end{corollary}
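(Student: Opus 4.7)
The plan is a direct reduction to Proposition \ref{prop:hitting-times} by a case analysis on the $\mathscr{F}_\tau$-measurable events
\begin{equation*}
E_= \eqdefl \{Y_\tau = x\},\quad E_< \eqdefl \{Y_\tau < x\}, \quad E_> \eqdefl \{Y_\tau > x\},
\end{equation*}
whose union is all of $\Omega$, so it suffices to establish the claimed positivity almost surely on each of $E_= \cap \{\tau<T\}$, $E_< \cap \{\tau<T\}$, and $E_> \cap \{\tau<T\}$ separately. On $E_=$ we have $H^x_\tau = \tau$ identically, so the conclusion is trivial there.

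For the case $E_<$, I would apply Proposition \ref{prop:hitting-times} to the non-empty open set $A \eqdefl (x,\infty)$, which yields
\begin{equation*}
\prob\big[H^A_\tau < \tau + \delta \,\big|\, \mathscr{F}_\tau\big] > 0 \quad \textrm{a.s.\ on } \{\tau < T\}.
\end{equation*}
The key observation is then the set-theoretic inclusion $\{H^A_\tau < \tau + \delta\} \cap E_< \subset \{H^x_\tau < \tau + \delta\}$: on this set one has $Y_\tau < x$ while $Y_{H^A_\tau} \in \cl A = [x,\infty)$, so continuity of the paths combined with the intermediate value theorem forces $Y$ to attain the value $x$ somewhere in $[\tau, H^A_\tau]$, and therefore $H^x_\tau \leqslant H^A_\tau < \tau + \delta$. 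Since $E_< \in \mathscr{F}_\tau$, taking conditional expectations of the corresponding indicator inequality gives
\begin{equation*}
\mathbf{1}_{E_<}\prob\big[H^x_\tau < \tau + \delta \,\big|\, \mathscr{F}_\tau\big] \geqslant \mathbf{1}_{E_<}\prob\big[H^A_\tau < \tau + \delta \,\big|\, \mathscr{F}_\tau\big],
\end{equation*}
and the right-hand side is strictly positive a.s.\ on $E_< \cap \{\tau<T\}$. The case $E_>$ is treated in exactly the same way with $A \eqdefl (-\infty,x)$.

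This argument is essentially mechanical once Proposition \ref{prop:hitting-times} is in hand, so I do not expect a genuine obstacle. The only two tacit points that warrant a line of justification are that $H^x_\tau$ really is a stopping time --- immediate from continuity of $Y$ and closedness of $\{x\}$ via the d\'ebut theorem --- and that the intermediate value step produces a \emph{strict} inequality $H^x_\tau < \tau + \delta$, which it does because on $E_<$ one has $Y_\tau \neq x$ and hence, by continuity, $H^A_\tau > \tau$ strictly. The only real content, beyond the hypothesis that $(Y_t)$ has CFS, is the upgrade from ``entering an open half-line'' to ``passing through a single target point'' that the intermediate value theorem supplies.
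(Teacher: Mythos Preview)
Your argument is correct and is precisely the natural fleshing-out of what the paper leaves implicit: the paper gives no proof beyond the sentence ``This is a straightforward corollary of Proposition~\ref{prop:hitting-times},'' and your case split on $\{Y_\tau = x\}$, $\{Y_\tau < x\}$, $\{Y_\tau > x\}$ together with the intermediate value theorem is exactly the expected route from hitting open half-lines to hitting the single point $x$. One small remark: your final paragraph about $H^A_\tau > \tau$ ensuring a \emph{strict} inequality is unnecessary, since $H^x_\tau \leqslant H^A_\tau < \tau + \delta$ already gives the strict inequality directly.
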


%\bibliographystyle{siamplain}
%\bibliography{pipliateekki-v4}

\end{document}